\newtheorem{corollary}{Corollary}
\newtheorem{theorem}{Theorem}
\newtheorem*{rem}{Remark}
\newtheorem{prop}{Proposition}
\newtheorem{lem}{Lemma}
\newtheorem{lemma}{Lemma}
\newcommand{\assign}{:=}
\newcommand{\mathd}{\mathrm{d}}
\newcommand{\mathi}{\mathrm{i}}
\newenvironment{Proof}{\noindent\textbf{Proof\ }}{\hspace*{\fill}$\Box$\medskip}
\newcommand{\tRe}{\textup{Re }}
\newcommand{\tIm}{\textup{Im }}
\newcommand{\bfrac}[2]{\left(\frac{#1}{#2}\right)}
\begin{document}
\title{The Riemann zeta function on vertical arithmetic progressions}
\author{Xiannan Li}
\address{Department of Mathematics \\ University of Illinois at Urbana-Champaign \\
1409 W. Green Street \\ Urbana, IL 61801 USA}
\email{xiannan@illinois.edu}
\author{Maksym Radziwi\l\l}
\address{Department of Mathematics \\ Stanford University \\
450 Serra Mall, Bldg. 380\\ Stanford, CA 94305-2125}
\email{maksym@stanford.edu}

\thanks{The second author is partially supported by a NSERC PGS-D award.}

\subjclass[2010]{Primary: 11M06, Secondary: 11M26}

\begin{abstract} We show that the twisted second moments of the Riemann zeta function averaged over the arithmetic progression
$\tfrac 12 + i(an + b)$ with $a > 0$, $b$ real, exhibits a remarkable correspondance with the analogous continuous average and derive several consequences.  For example, motivated by the linear independence conjecture,
we show at least
one third of the elements in the arithmetic progression $a n + b$ 
are not the ordinates of some zero of $\zeta(s)$
lying on the critical line. This improves on earlier work of
Martin and Ng. We then complement this result by producing
large and small values of $\zeta(s)$ on arithmetic progressions which are of the same quality as the best $\Omega$ results currently known for $\zeta(\tfrac 12 + it)$ with $t$ real.
\end{abstract}

\maketitle

\section{Introduction}
In this paper, we study the behavior of the Riemann zeta function $\zeta(s)$ in vertical arithmetic progressions on the critical line.  To be more precise, fix real numbers $\alpha>0$ and $\beta$.  We are interested in the distribution of values of $\zeta(1/2 + i(\alpha \ell+\beta))$ as $\ell$ ranges over the integers in some large dyadic interval $[T, 2T]$.  Here are some specific questions of interest:
\begin{enumerate}
 \item How does the mean square $\sum_{\ell \in [T, 2T]} |\zeta(\tfrac 12 + i\ell)|^2$ compare to $\int_T^{2T} |\zeta(\tfrac 12 +it)|^2 dt$?  
\item Does the mean square of $\zeta(s)$ distinguish arithmetic sequences? That is, does $\sum_{\ell \in [T, 2T]} |\zeta(1/2 + i (\alpha \ell+\beta))|^2$ depend on $\alpha$ and $\beta$? 
\item What about the case $\sum_{\ell \in [T, 2T]} |\zeta (\tfrac 12 + i (\alpha \ell+\beta))B(\tfrac 12 + i(\alpha \ell + \beta))|^2$, where $B(s)$ is an arbitary Dirichlet polynomial?  In the special when $B(s)$ is a mollifier, the continuous average of $\zeta(\tfrac 12 + it) B(\tfrac 12 +it)$ has been shown to be close to $1$.  Does $B(s)$ still act the same way when restricted to the discrete sequence $\tfrac 12 + i(\alpha \ell + \beta)$?
\end{enumerate}

For most - but not all - values of $\alpha$ and $\beta$
our results suggest that the average behavior of
$\zeta(\tfrac 12 + i(\alpha \ell + \beta))$ is similar to that of a unitary family such as $L(\tfrac 12 ; \chi)$.  

Besides being of independent interest the above three questions are motivated
by the linear independence conjecture, which we approach through
two simpler questions:
\begin{enumerate}
\item Can $\zeta(s)$ vanish at many (or most) of the points $\tfrac 12 + 
i(\alpha \ell + \beta)$?
\item Can $\zeta(s)$ be extremely large or small at a point of 
the form $\tfrac 12 + i (\alpha \ell + \beta)$? Are the extreme values
at $\tfrac 12 + i(\alpha \ell + \beta)$ comparable to those of 
$\zeta(\tfrac 12 + it)$ with $t \in [T;2T]$?
%
%
\end{enumerate}
We begin with some mean square results.

\subsection{Mean value estimates}
The distribution of values of $\zeta(s)$ on the critical line has been studied extensively by numerous authors and in particular the moments of $\zeta(s)$ have received much attention.  
Consider a Dirichlet polynomial $B(s)$ with,
\begin{equation}\label{eqn:B(s)}
B(s) = \sum_{n \leq T^{\theta}} \frac{b(n)}{n^s} , \text{ and } b(n) \ll d_{A}(n)
\end{equation}
for some fixed, but arbitrary $A > 0$. Throughout we will assume that the
coefficients $b(n)$ are real.
\begin{theorem} \label{mirage2}
Let $B(s)$ be as above. 
Let $\phi(\cdot)$ be a smooth compactly supported function, with support
in $[1,2]$.  If $\theta < \tfrac 12$, then 
as $T \rightarrow \infty$.
$$
\sum_{\ell} |\zeta(\tfrac 12 + i\ell)B(\tfrac 12 + i\ell)|^2 \cdot
\phi \bigg ( \frac{\ell}{T} \bigg ) = \int_{\mathbbm{R}} |\zeta(\tfrac 12
 + it)B(\tfrac 12 + it)|^2 \cdot \phi \bigg ( \frac{t}{T} \bigg )dt 
+ O_{A}(T (\log T)^{-A})).
$$
\end{theorem}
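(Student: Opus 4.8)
The natural way to compare a discrete and a continuous average is Poisson summation, and I would use it to isolate the main term cleanly. Put $G(t):=|\zeta(\tfrac12+it)B(\tfrac12+it)|^2\phi(t/T)$; since $\phi$ is smooth with support in $[1,2]$ and $t\mapsto|\zeta(\tfrac12+it)|^2=\zeta(\tfrac12+it)\zeta(\tfrac12-it)$ is real-analytic on $\mathbb R$, the function $G$ is smooth with compact support, so
\[
\sum_\ell G(\ell)=\sum_{k\in\mathbb Z}\widehat G(k),\qquad \widehat G(k)=\int_{\mathbb R}|\zeta(\tfrac12+it)B(\tfrac12+it)|^2\,\phi(t/T)\,e^{-2\pi i kt}\,\mathd t .
\]
The term $k=0$ is precisely the integral on the right-hand side of the theorem, so it remains to prove $\sum_{k\neq0}\widehat G(k)=O_A\!\left(T(\log T)^{-A}\right)$. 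The merit of doing Poisson first is that the approximate functional equation, which enters only in the estimation of $\widehat G(k)$ for $k\neq0$, can then never contaminate the main term. Integrating by parts in $t$ (using that all derivatives of $\zeta(\tfrac12+it)$ grow polynomially) kills the range $|k|\gg T^{O(1)}$ at once; the length of the Dirichlet polynomials below will further restrict the effective range to $1\le|k|\ll\log T$.

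To estimate $\widehat G(k)$ for $k\neq0$ I would insert the smoothed approximate functional equation $\zeta(\tfrac12+it)=\sum_n n^{-1/2-it}V\!\left(n\sqrt{2\pi/t}\right)+\chi(\tfrac12+it)\sum_n n^{-1/2+it}V\!\left(n\sqrt{2\pi/t}\right)+O(t^{-1/4})$, and expand $|\zeta B|^2$. This writes $\zeta B=M_1+\chi(\tfrac12+it)M_2+(\text{error})$, where $M_1=\sum_{r\le T^{1/2+\theta+\varepsilon}}\beta(r)\,r^{-1/2-it}$ is an honest Dirichlet polynomial ($\beta=\mathbbm 1*b$ truncated, so $\beta(r)\ll d_{A+1}(r)$) and $M_2$ is a comparable sum of frequencies $(n/j)^{it}$, whence $|\zeta B|^2=|M_1|^2+|M_2|^2+2\,\tRe\!\big(\chi(\tfrac12+it)\overline{M_1}M_2\big)+(\text{error terms})$. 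The $O(t^{-1/4})$ contribution to each $\widehat G(k)$ is, by Cauchy--Schwarz against $\int_T^{2T}|\zeta B|^2\ll T(\log T)^{O(1)}$ (fourth moment of $\zeta$ together with a mean-value bound for $B^2$, legitimate precisely because $\theta<\tfrac12$) and $\int_T^{2T}|B|^2\ll T(\log T)^{O(1)}$, at most $O(T^{3/4+\varepsilon})$; over the $O(\log T)$ relevant $k$ this is negligible. The decisive use of $\theta<\tfrac12$ is that it keeps all these polynomials strictly shorter than $T$: expanding $|M_1|^2$ into pairs and performing the $t$-integral leaves $\sum_{r,r'}\beta(r)\beta(r')\,T\,\widehat\phi\!\big(\tfrac T{2\pi}(2\pi k-\log(r'/r))\big)$ with $r,r'\le T^{1/2+\theta+\varepsilon}=T^{1-\delta}$, $\delta>0$; since $\widehat\phi$ is Schwartz and $|\log(r'/r)|\le(1-\delta)\log T$, only $|k|\ll\log T$ survive, and for each such $k$ the surviving pairs satisfy $r'=r\,e^{2\pi k}\big(1+O(T^{\varepsilon-1})\big)$.

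The genuine obstacle is now to turn this near-equation into a real saving. Bounding in absolute value fails: there is at most one admissible $r'$ per $r$ (the interval has length $r e^{2\pi k}T^{\varepsilon-1}\le T^{\varepsilon-\delta}<1$), but $\sum_r T/r\ll T\log T$ is of the wrong order. For the non-$\chi$ terms I would therefore bring in a Diophantine input: $e^{2\pi k}=(e^{\pi})^{2k}$ has finite irrationality measure (an effective transcendence measure for $e^{\pi}$ being classical, via Gelfond's method), so a rational $r'/r$ lying within $O(T^{\varepsilon-1})$ of $e^{2\pi k}$ must have denominator $\gg_k T^{c}$; feeding this back, together with a dyadic decomposition of $M_1$, makes the admissible $r$ both large and sparse and produces a power saving $O(T^{1-c'})$, the $(\log T)^{O(1)}$ loss over the $O(\log T)$ admissible $k$ being harmless. (Alternatively one extracts cancellation directly from the coefficient sum $\sum_{r,r'}\beta(r)\beta(r')(\cdots)$ rather than bounding absolutely.)

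The heart of the matter — and the step I expect to be hardest — is the cross term $2\,\tRe\big(\chi(\tfrac12+it)\overline{M_1}M_2\big)$. Here $\chi(\tfrac12+it)=(t/2\pi)^{-it}e^{i(t+\pi/4)}(1+O(1/t))$ carries a genuine stationary phase: the relevant integral $\int\chi(\tfrac12+it)\,Q^{it}\phi(t/T)\,e^{-2\pi i kt}\,\mathd t$, with $Q=rn/j$, has a stationary point at $t_0=2\pi Q e^{-2\pi k}$, contributes only when $t_0\in[T,2T]$ (so $Q\asymp Te^{2\pi k}$, again $|k|\ll\log T$), and stationary phase evaluates it to $\asymp\sqrt T\,e^{i(t_0+\pi/4)}$. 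The resulting sum $\sqrt T\sum_{r,n,j}\tfrac{\beta(r)b(j)}{\sqrt r\sqrt{nj}}\,e^{2\pi i (rn/j)e^{-2\pi k}}$ is off from $T(\log T)^{-A}$ by a power of $T$ if bounded trivially, so one must exploit the oscillation of $e^{2\pi i(rn/j)e^{-2\pi k}}$: summing over $n$, say, gives cancellation governed by $\|r e^{-2\pi k}/j\|$, again controlled by the irrationality measure of $e^{2\pi k}$, and $\theta<\tfrac12$ keeps the ranges short enough for this to close. Assembling the non-$\chi$ and $\chi$ estimates over $1\le|k|\ll\log T$ and absorbing the $O(\log T)$ loss then yields $\sum_{k\neq0}\widehat G(k)=O_A(T(\log T)^{-A})$, which is the theorem.
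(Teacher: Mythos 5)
Your overall strategy -- Poisson summation in $\ell$ so that the zero frequency reproduces the continuous average, followed by a Diophantine analysis of the nonzero frequencies using an effective irrationality measure for $e^{2\pi k}$ -- is exactly the paper's (Proposition \ref{prop:aver} plus the Waldschmidt bound (\ref{wald})). But there is a genuine gap in how you open up $|\zeta|^2$. The paper uses the symmetric smoothed approximate functional equation for the \emph{product} $\zeta(\tfrac12+it)\zeta(\tfrac12-it)$, namely $|\zeta(\tfrac12+it)|^2 = 2\sum_{mn<T^{1+\varepsilon}}(mn)^{-1/2}(m/n)^{it}W(2\pi mn/t)+O(T^{-2/3})$, in which the functional-equation factors cancel: there is no $\chi$, no stationary phase, and every off-diagonal term is of the form $(mh/nk)^{it}$, so the only arithmetic condition that ever arises is $mh/nk\approx e^{2\pi k}$. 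You instead expand $\zeta$ itself as $M_1'+\chi M_2'+O(t^{-1/4})$ and are forced to confront the cross term $2\,\tRe(\chi\overline{M_1}M_2)$, which you correctly identify as the hardest step -- and your proposed treatment of it does not close. After stationary phase you need cancellation in $\sum_n n^{-1/2}e(n\theta_0)$ with $\theta_0=re^{-2\pi k}/j$ and $n$ ranging over an interval of length $N_0\asymp Tje^{2\pi k}/r$ (a power of $T$); the geometric-series bound is $\min(N_0,\|\theta_0\|^{-1})$, and the best effective irrationality measures for $e^{2\pi k}$ (Waldschmidt/Gelfond) only give $\|re^{-2\pi k}/j\|\geq\exp(-c\log T\log\log T)$ once the implicit denominators are allowed to be as large as $T$. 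That lower bound is super-polynomially small, so it gives \emph{no} cancellation at all against $N_0$, and the trivial bound on the cross term is around $T^{1+\theta}e^{\pi k}$ -- off by a power of $T$. Controlling $\|re^{-2\pi k}/j\|$ for \emph{all} (or all but a quantifiable exceptional set of) pairs $(r,j)$ is not something an irrationality measure for the single number $e^{2\pi k}$ can deliver. The fix is simply to use the $|\zeta|^2$ form of the approximate functional equation, which removes this term entirely.

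A secondary, correctable overstatement: for the non-$\chi$ terms you claim that $|r'/r-e^{2\pi k}|\ll T^{\varepsilon-1}$ forces denominator $\gg T^{c}$ and hence a power saving $O(T^{1-c'})$. Waldschmidt's bound $|e^{\pi m}-p/q|\geq\exp(-2^{72}\log(2m)\log p\log\log p)$ only forces $\log p\gg\log T/\log\log T$, i.e.\ $p\geq T^{c/\log\log T}$, which is super-polylogarithmic but not a fixed power of $T$. This still beats $(\log T)^{-A}$ for every $A$ -- which is all the theorem asserts, and is precisely how the paper concludes -- but it does not give a power saving, so you should not rely on one elsewhere in the argument.
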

Since $\zeta(s)B(s)$ oscillates on a scale of $2\pi / \log T$ it is
interesting that we can reconstruct accurately the continuous average of $\zeta(s)B(s)$ only by sampling at the integers.  The reader may be amused by examining the same statement for $\sin x$ or $\sin (\log (|x|+1) x)$, which will be equivalent to the equidistribution of certain sequences modulo $1$.

Theorem \ref{mirage2} depends on the fact that we are summing over the integers, and specifically on the fact that the sequence $e^{2\pi \ell}$ cannot be well approximated by rational numbers.  To amplify this dependence, let us consider 
the second moment of $\zeta(s)$ averaged over an arithmetic progression $\alpha n + \beta$,
with arbitrary $\alpha > 0$ and $\beta$.  
In this context, our result will depend on the diophantine properties of $e^{2\pi \ell / \alpha}$. 
Let 
$$
\delta(\alpha,\beta) = \begin{cases}
0 & \text{ if } e^{2\pi\ell / \alpha} \text{ is irrational for all } \ell > 0 \\
\frac{2\cos(\beta \log (m/n)) \sqrt{m n} - 2}{m n + 1 - 2 \sqrt{m n} \cos(\beta \log(m / n))} & \text{ if } e^{2\pi\ell/\alpha} \text{ is rational for some } \ell > 0
\end{cases}
$$
with $m/n \neq 1$ denoting the smallest reduced fraction 
having a representation in the form $e^{2\pi\ell/\alpha}$ for some $\ell > 0$.
Then we have the following asymptotic result for the second moment of
the Riemann zeta function.
\begin{theorem} \label{thm:mirage}
Let $\phi(\cdot)$ be a smooth compactly supported function, with support
in $[1,2]$. Let $\alpha > 0$, $\beta$ be real numbers.  Then, as $T \rightarrow \infty$,
$$
\sum_{\ell} |\zeta(\tfrac 12 + i(\alpha\ell + \beta))|^2 \cdot \phi \bfrac{\ell}{T}
= \int_{\mathbbm{R}} |\zeta(\tfrac 12 + i(\alpha t + \beta))|^2 \cdot
\phi \bfrac{t}{T} dt \cdot ( 1 + \delta(\alpha,\beta) +  o(1))
$$
\end{theorem}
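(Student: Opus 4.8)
The plan is to apply Poisson summation to the full sum at once. Put $F(x) = \phi(x/T)\,|\zeta(\tfrac12 + i(\alpha x+\beta))|^2$, so that $\sum_{\ell}F(\ell) = \sum_{k\in\mathbb Z}\widehat F(k)$, and observe that the zero frequency $\widehat F(0) = \int_{\mathbb R}F(x)\,dx$ is exactly the continuous average appearing on the right-hand side of the theorem. Since $\widehat F(0)\asymp T\log T$, the theorem is equivalent to
$$
\sum_{k\neq 0}\widehat F(k) \;=\; \delta(\alpha,\beta)\cdot\widehat F(0)\;+\;o(T\log T);
$$
that is, one must extract the secondary main term $\delta(\alpha,\beta)\widehat F(0)$ from the nonzero frequencies and show every remaining contribution is $o(T\log T)$.

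To handle $\widehat F(k)$ for $k\neq 0$, I would insert a smooth approximate functional equation for the second moment, writing (up to a negligible error)
$$
|\zeta(\tfrac12+it)|^2 \;=\; \sum_{a,b\geq 1}\frac{1}{\sqrt{ab}}\Bigl[\bigl(\tfrac ab\bigr)^{it}+\bigl(\tfrac ba\bigr)^{it}\Bigr]V_t(ab)\;+\;(\text{error}),
$$
where $V_t$ is a fixed smooth weight, $\approx 1$ for $ab\ll t$ and rapidly decaying for $ab\gg t$. After the substitution $t=\alpha x+\beta$ and interchanging summation, $\widehat F(k)$ becomes $2\sum_{a,b}(ab)^{-1/2}(a/b)^{i\beta}$ times the oscillatory integral $\int\phi(x/T)\,V_{\alpha x+\beta}(ab)\,e^{ix(\alpha\log(a/b)-2\pi k)}\,dx$. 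Because $V_{\alpha x+\beta}(ab)$ varies only on the scale $x\asymp T$, repeated integration by parts makes this integral negligible as soon as $|\alpha\log(a/b)-2\pi k|\gg T^{-1+\varepsilon}$, and it vanishes identically exactly when $a/b = e^{2\pi k/\alpha}$, which is possible only if $e^{2\pi k/\alpha}\in\mathbb Q$. (For $k=0$ this already isolates the diagonal $a=b$, since $|\log(a/b)|\gg T^{-1/2}$ for $a\neq b$ with $ab\ll T$, and that diagonal is matched — via the same approximate functional equation, after discarding the off-diagonal — to the continuous average $\widehat F(0)$.) So for $k\neq 0$ what survives is the contribution of the pairs with $a/b$ within $\asymp T^{-1+\varepsilon}$ of $e^{2\pi k/\alpha}$.

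Suppose first $e^{2\pi\ell/\alpha}$ is irrational for every $\ell>0$, so there are no exact resonances. Each surviving near-resonant reduced fraction $p/q$ satisfies $pq\ll T$ and $|\log(p/q)-2\pi k/\alpha|\ll T^{-1+\varepsilon}$; since a fixed rational $\neq e^{2\pi k/\alpha}$ sits a fixed positive distance away, such a $p/q$ must have $pq\to\infty$ as $T\to\infty$, and its total contribution (bounding the integral trivially by $\ll T$ and summing over the dilates $(jp,jq)$ with $j^2pq\ll T$) is $\ll T(pq)^{-1/2}\log T = o(T\log T)$; after dyadically splitting according to the size of $\alpha\log(a/b)-2\pi k$ (where the integral bound improves to $\min(T,|\alpha\log(a/b)-2\pi k|^{-1})$) and summing over the $O_\alpha(\log T)$ values of $k$ with $e^{2\pi|k|/\alpha}\ll T$, the total stays $o(T\log T)$. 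Suppose instead $e^{2\pi\ell_1/\alpha}=m/n$ in lowest terms with $\ell_1\geq1$ minimal — this $m/n$ is the fraction of the statement, and since $\{\ell:e^{2\pi\ell/\alpha}\in\mathbb Q\}$ is a subgroup of $\mathbb Z$ it equals $\ell_1\mathbb Z$, so the resonant frequencies are precisely $k=r\ell_1$, $r\neq0$, with primitive resonant pair $(m^{r},n^{r})$ for $r>0$ and $(n^{|r|},m^{|r|})$ for $r<0$. For these, $(a/b)^{i\beta}=e^{ir\beta\log(m/n)}$ and the surviving integral is $\int\phi(x/T)V_{\alpha x+\beta}\bigl(j^{2}(mn)^{|r|}\bigr)\,dx$; summing over the dilates $j$ against $V$ reproduces the very same $j$-sum and leading $\log T$ as on the diagonal, up to a factor $(mn)^{-|r|/2}$, so the total resonant contribution equals
$$
\widehat F(0)\cdot\sum_{r\geq1}\frac{2\cos\!\bigl(r\beta\log(m/n)\bigr)}{(mn)^{r/2}}\,\bigl(1+o(1)\bigr).
$$
Writing $\gamma=\beta\log(m/n)$ and $P=\sqrt{mn}$ and summing the geometric series,
$$
\sum_{r\geq1}\frac{2\cos r\gamma}{P^{r}} \;=\; \tRe\frac{2e^{i\gamma}}{P-e^{i\gamma}} \;=\; \frac{2P\cos\gamma-2}{P^{2}+1-2P\cos\gamma}\;=\;\delta(\alpha,\beta),
$$
which is the claimed constant.

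The step I expect to be the real obstacle is the off-resonant bound in the irrational case: one must rule out that for some $\alpha$ with $e^{2\pi k/\alpha}$ irrational but extremely well approximable (a Liouville-type value) an unusually accurate low-height approximation $p/q$ — or many such, spread across the relevant $k$ — contributes as much as $T\log T$. The resolution is that for irrational $e^{2\pi k/\alpha}$ the minimal height $pq$ of a rational approximation accurate to within $T^{-1+\varepsilon}$ necessarily tends to infinity with $T$: a bounded-height near-resonance would force $e^{2\pi k/\alpha}=O_\alpha(1)$ and hence $k=O_\alpha(1)$, and over that finite set of pairs the fixed positive distances eventually exceed $T^{-1+\varepsilon}$. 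Making this quantitative and uniform enough over $k\ll\log T$, in tandem with the dyadic analysis of the phase, is where the bulk of the technical effort lies; by comparison the integration-by-parts bookkeeping, the identification of the $k=0$ frequency with the continuous average, and the evaluation of the resonant dilation sums are routine.
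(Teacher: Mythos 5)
Your proposal is correct and follows essentially the same route as the paper: Poisson summation on $\ell$ combined with the smoothed approximate functional equation, classification of the nonzero frequencies by whether $e^{2\pi k/\alpha}$ admits a rational approximation of accuracy $T^{-1+\varepsilon}$ (with bounded height forcing an exact resonance, hence rationality), and summation of the resulting geometric series to produce $\delta(\alpha,\beta)$ -- this is precisely what the paper packages into Proposition \ref{prop:aver} and then exploits in the two-case analysis of Section 2. The technical point you flag (controlling the possibly many near-resonant fractions of large height, uniformly over $k\ll\log T$) is handled in the paper by the $S_1$/$S_2$ split via Farey spacing, matching your dyadic argument.
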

In the above, $o(1)$ denotes a quantity tending to $0$ as $T$ grows, which depends on the diophantine properties of $\alpha$ and $\beta$.  Our methods allow us to prove an analogous result for the second moment of
$\zeta(s)$ twisted by a Dirichlet polynomial over an arbitrary
vertical arithmetic progression.  See Proposition \ref{prop:aver} for more details.

In contrast to Theorem \ref{thm:mirage}, the dependence on the diophantine properties of $\alpha$ and $\beta$ is nullified when $B$ is a mollifier.  To be precise, let $\phi(\cdot)$ be a smooth compactly supported function, with support
in $[1,2]$, and define
$$
M_{\theta}(s) := \sum_{n \leq T^{\theta}} \frac{\mu(n)}{n^s}
\cdot \bigg ( 1 - \frac{\log n}{\log T^{\theta}} \bigg ).
$$Then we have the following Theorem.

\begin{theorem}\label{thm:mollifiedmirage}
Let the mollified second moment be defined as
\begin{equation}\label{eqn:J}
\mathcal{J} := \sum_{\ell} |\zeta(\tfrac 12 + i(\alpha\ell + \beta))
 M_{\theta} (\tfrac 12+ i(\alpha \ell+\beta))|^2 \phi
\bfrac{\ell}{T}. 
\end{equation}
Let $0 < \theta < \tfrac 12$ and $a > 0$ and $b$ be real numbers.  Then, 
$$
\mathcal{J} =
\int_{\mathbbm{R}} \big | (\zeta \cdot M_{\theta}) (\tfrac 12 + i (\alpha t + \beta))|^{2}
\cdot \phi \bfrac{t}{T} dt + O \bigg (\frac{T}{(\log T)^{1 - \varepsilon}} \bigg )
$$
\end{theorem}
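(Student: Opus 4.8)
\medskip
\noindent\emph{Strategy of proof.}\ The plan is to reduce the discrete average $\mathcal J$ to the continuous one by Poisson summation, and then to show that the mollifier collapses every secondary term down to the error level --- this is precisely what separates Theorem~\ref{thm:mollifiedmirage} from Theorem~\ref{thm:mirage}, where those terms survive as the constant $\delta(\alpha,\beta)$. Set $g(t) \assign |\zeta(\tfrac12 + \mathi(\alpha t + \beta))\, M_{\theta}(\tfrac12 + \mathi(\alpha t + \beta))|^{2}\,\phi(t/T)$; since $g$ is smooth and compactly supported, Poisson summation gives $\mathcal J = \sum_{\ell} g(\ell) = \sum_{\kappa \in \mathbbm Z} \widehat g(\kappa)$, and the term $\kappa = 0$ is exactly the integral on the right-hand side of the theorem. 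Hence it suffices to prove $\sum_{\kappa \neq 0} \widehat g(\kappa) \ll T(\log T)^{-1+\varepsilon}$. After the substitution $u = \alpha t + \beta$,
\[
\widehat g(\kappa) = \frac{1}{\alpha} \int_{\mathbbm R} \big| \zeta M_{\theta}(\tfrac12 + \mathi u) \big|^{2}\, \phi\Big(\frac{u-\beta}{\alpha T}\Big)\, e^{-2\pi \mathi \kappa u/\alpha}\, \mathd u ,
\]
and inserting the approximate functional equation for $\zeta(\tfrac12 + \mathi u)$ --- so that $\zeta M_{\theta}$ becomes, up to a negligible error, a Dirichlet polynomial of length $\ll T^{1+\theta}$ --- and integrating by parts repeatedly shows that $\widehat g(\kappa)$ is negligible unless $|\kappa| \ll_{\alpha} \log T$. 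Thus only $O_{\alpha}(\log T)$ terms remain, and it is enough to bound them by a quantity which, once summed, is $\ll_{\alpha,\beta} T(\log T)^{-1+\varepsilon}$. (Alternatively one may quote the twisted second moment estimate, Proposition~\ref{prop:aver}, applied with $B = M_{\theta}$: its main term is the continuous integral together with arithmetic terms attached to the rational representations of $e^{2\pi\ell/\alpha}$ and weighted by the coefficients of $M_{\theta}$, and the task becomes bounding those.)

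\medskip
Fix $\kappa \neq 0$. Writing $|M_{\theta}(\tfrac12+\mathi u)|^{2} = \sum_{m,n \le T^{\theta}} \tfrac{\mu(m)\mu(n)}{\sqrt{mn}}\big(1 - \tfrac{\log m}{\log T^{\theta}}\big)\big(1 - \tfrac{\log n}{\log T^{\theta}}\big)(n/m)^{\mathi u}$, expanding $|\zeta(\tfrac12+\mathi u)|^{2}$ by the approximate functional equation, and combining the two Dirichlet polynomials, the total oscillatory factor in $u$ is $\exp\big(\mathi u\, (\log(N_{2}/N_{1}) - 2\pi\kappa/\alpha)\big)$ with $N_{1}, N_{2} \ll T^{1+\theta}$, so that stationary phase forces $N_{2}/N_{1}$ to lie within $O(T^{-1+\varepsilon})$ of $e^{2\pi\kappa/\alpha}$. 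If $e^{2\pi\kappa/\alpha}$ has no reduced-fraction approximation of denominator $\ll T$ to this accuracy, then the spacing of Farey fractions together with a mean value estimate for the Dirichlet polynomial $\zeta M_{\theta}$ --- here $\theta < \tfrac12$ ensures the diagonal dominates, exactly as in the proof of Theorem~\ref{mirage2} --- already shows $\widehat g(\kappa)$ is negligible. Otherwise $e^{2\pi\kappa/\alpha}$ is close to, or in the resonant case $e^{2\pi\ell/\alpha} = p/q$ equal to, a reduced fraction $p_{\kappa}/q_{\kappa}$; writing $N_{1} = q_{\kappa} r$, $N_{2} = p_{\kappa} r$, the contribution reorganizes into a weighted sum of mollifier correlation sums of the shape
\[
\frac{1}{\sqrt{p_{\kappa} q_{\kappa}}} \sum_{r} \frac{a(q_{\kappa} r)\, a(p_{\kappa} r)}{r}\, \Psi(r) ,
\]
where $a(n) \assign \sum_{d \mid n,\, d \le T^{\theta}} \mu(d)\big(1 - \tfrac{\log d}{\log T^{\theta}}\big)$ is the $n$-th coefficient of $\zeta M_{\theta}$ (so $a(1) = 1$, $a(n) = \Lambda(n)/\log T^{\theta}$ for $1 < n \le T^{\theta}$, and $a(n) \ll d(n)$ in general) and $\Psi$ is a smooth, slowly varying weight of size $\ll T$.

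\medskip
It then remains to show these mollifier correlation sums are smaller than the diagonal sum $\sum_{r} a(r)^{2}/r \asymp 1/\theta$ by a power of $\log T$ --- this is the step that erases $\delta(\alpha,\beta)$. The saving comes from the cancellation in the Möbius function carried by $a(n)$: evaluating $\sum_{r} a(q_{\kappa} r)\, a(p_{\kappa} r)\, r^{-s}$ by contour integration produces factors of $M_{\theta}(1) = \sum_{d \le T^{\theta}} \tfrac{\mu(d)}{d}\big(1 - \tfrac{\log d}{\log T^{\theta}}\big) \ll 1/\log T$, reflecting that $\sum_{d} \mu(d)\, d^{-1-s}$ has no pole at $s = 0$ --- morally $\prod_{p}(1 - p^{-1}) = 0$. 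When $p_{\kappa} q_{\kappa}$ is large the prefactor $(p_{\kappa}q_{\kappa})^{-1/2}$ makes the estimate routine, and the same factor yields geometric decay in $|\kappa|$ in the resonant case; when $p_{\kappa} q_{\kappa}$ is small, including at genuine resonances, the Möbius cancellation is essential. Combining the resulting bound with the error terms from the approximate functional equation and from the twisted second moment asymptotics, and summing over the $O_{\alpha}(\log T)$ relevant $\kappa$, gives $O(T(\log T)^{-1+\varepsilon})$. Establishing the logarithmic saving in the correlation sums uniformly --- no matter how nearly $e^{2\pi\kappa/\alpha}$ resonates --- is the main obstacle; everything else is the standard apparatus underlying Theorems~\ref{mirage2} and~\ref{thm:mirage} and Proposition~\ref{prop:aver}.
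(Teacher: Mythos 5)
Your proposal follows essentially the same route as the paper: Poisson summation (the content of Proposition \ref{prop:aver}) reduces $\mathcal{J}$ to the continuous average plus correlation sums $\sum_r a(a_{\ell}r)a(b_{\ell}r)/r$ of the coefficients of $\zeta M_{\theta}$ attached to the rational approximations of $e^{2\pi\ell/\alpha}$, and the theorem then rests on a uniform $(\log T)^{-1+\varepsilon}$ saving in those sums arising from M\"obius cancellation. The one step you leave as a sketch --- that uniform logarithmic saving --- is exactly the paper's Lemma \ref{lemma:saving}, which establishes it by the mechanism you anticipate: a triple contour integral in which the local Euler factor at any prime dividing $a_{\ell}b_{\ell}$ vanishes at the would-be pole $z_1=0$, followed by a shift into the classical zero-free region.
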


The lack of dependence on the diophantine properties of $\alpha$ and $\beta$ in Theorem \ref{thm:mollifiedmirage} gives the non-vanishing proportion of $\tfrac 13$ in Theorem \ref{thm:nonvanishing} below.  

\subsection{Non-vanishing results}
One of the fundamental problems in analytic number theory is determination of the location of the zeros of $L$-functions.  Here, one deep conjecture about the vertical distribution of zeros of $\zeta(s)$ is the Linear Independence Conjecture (LI), which states that the ordinates of non-trivial zeros of $\zeta(s)$ are linearly independent over $\mathbb{Q}$.  In general, it is believed that the zeros of $L$-functions do not satisfy any algebraic relations, but rather appear to be ``random'' transcendental numbers.   Classically, Ingham \cite{Ingham} linked the linear independence conjecture for the Riemann zeta-function with the
oscilations of $M(x) = \sum_{n \leq x} \mu(n)$, in particular offering a 
conditional disproof of Merten's conjecture that $|M(x)| \leq \sqrt{x}$
for all $x$ large enough.  There are a number of connections between LI and the distribution of primes.  For instance, Rubinstein and Sarnak \cite{RubinsteinSarnak} showed a connection between LI for Dirichlet $L$-functions and prime number races, and this has appeared in the work of many subsequent authors. 

LI appears to be far out of reach of current technology.  However, it implies easier conjectures which may be more tractable.  One of these is that the vertical ordinates of nontrivial zeros of $\zeta(s)$ should not lie in an arithmetic progression.  To be more precise, for fixed 
$\alpha > 0$, $\beta \in \mathbbm{R}$, 
let
\begin{equation*}
P_{\alpha,\beta}(T) = \frac{1}{T} \cdot \text{Card} \{T \leq \ell
\leq 2T: \zeta(\tfrac 12 + i(\alpha\ell+\beta)) \neq 0 \}. 
\end{equation*}
Then what kind of lower bounds can we prove for $P_{\alpha,\beta}(T)$ for large $T$?  
Recently, improving on the work of numerous earlier authors, Martin and Ng \cite{MartinNg} showed that 
$P_{\alpha,\beta}(T) \gg_{\alpha,\beta} (\log T)^{-1}$ 
which misses the truth by a factor of $\log T$.  
In this paper, we prove the following improvement.
\begin{theorem} \label{thm:nonvanishing}
Let $\alpha > 0$ and $\beta$ be real. Then, as $T \rightarrow \infty$, 
$$
P_{\alpha,\beta}(T) \geq \frac 13 + o(1). 
$$
\end{theorem}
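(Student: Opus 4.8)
The plan is to deduce Theorem~\ref{thm:nonvanishing} from a mollified first-and-second moment argument in the style of Levinson and Iwaniec--Sarnak, with $M_\theta$ playing the role of the mollifier and Theorem~\ref{thm:mollifiedmirage} supplying the (hard) second moment over the progression. Fix a parameter $\theta<\tfrac12$ and a smooth function $\phi$ supported in $[1,2]$ with $0\le\phi\le 1$, and write $t_\ell\assign\alpha\ell+\beta$. Since the terms with $\zeta(\tfrac12+it_\ell)=0$ drop out, Cauchy--Schwarz gives
\[
\Bigl|\sum_{\ell}(\zeta M_\theta)(\tfrac12+it_\ell)\,\phi\bfrac{\ell}{T}\Bigr|^{2}
\ \le\ \Bigl(\sum_{\ell\,:\,\zeta(1/2+it_\ell)\neq 0}\phi\bfrac{\ell}{T}\Bigr)\cdot
\sum_{\ell}\bigl|(\zeta M_\theta)(\tfrac12+it_\ell)\bigr|^{2}\phi\bfrac{\ell}{T},
\]
and, because $0\le\phi\le1$ and $\phi(\ell/T)$ is supported on $\ell\in[T,2T]$, the first factor on the right is at most $T\,P_{\alpha,\beta}(T)$. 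Writing $\mathcal{M}_1\assign\sum_{\ell}(\zeta M_\theta)(\tfrac12+it_\ell)\phi(\ell/T)$ and taking $\mathcal{J}$ as in \eqref{eqn:J} with this $\phi$, we obtain
\[
P_{\alpha,\beta}(T)\ \ge\ \frac{1}{T}\cdot\frac{|\mathcal{M}_1|^{2}}{\mathcal{J}} .
\]
It therefore suffices to prove $\mathcal{M}_1\sim T\int\phi$ and $\mathcal{J}\sim\bigl(1+\tfrac1\theta\bigr)T\int\phi$ as $T\to\infty$.

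For the denominator I would apply Theorem~\ref{thm:mollifiedmirage} directly: it gives $\mathcal{J}=\int_{\mathbbm{R}}|(\zeta M_\theta)(\tfrac12+i(\alpha t+\beta))|^{2}\phi(t/T)\,\mathd t+O(T(\log T)^{-1+\varepsilon})$. Substituting $u=\alpha t+\beta$ turns the main term into $\tfrac1\alpha\int_{\mathbbm{R}}|(\zeta M_\theta)(\tfrac12+iu)|^{2}\phi\bigl(\tfrac{u-\beta}{\alpha T}\bigr)\,\mathd u$, i.e.\ a smoothly weighted mollified second moment of $\zeta$ over a window of length $\asymp\alpha T$ at height $\asymp\alpha T$. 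Since $M_\theta$ has length $T^{\theta}\asymp(\alpha T)^{\theta}$, the classical evaluation of the mollified second moment (valid for $\theta<\tfrac12$, and in any case carried out on the continuous side elsewhere in the paper) shows this equals $\bigl(1+\tfrac1\theta\bigr)\alpha T\int\phi\,(1+o(1))$, hence $\mathcal{J}\sim\bigl(1+\tfrac1\theta\bigr)T\int\phi$.

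For the numerator, $\mathcal{M}_1$ is a mollified \emph{first} moment over the progression (a first-moment variant of Proposition~\ref{prop:aver}); it is not literally among the results quoted above, but it follows from the same circle of ideas and is considerably easier. One expands $\zeta(\tfrac12+iu)$ by the approximate functional equation, multiplies by $M_\theta(\tfrac12+iu)=\sum_{m\le T^{\theta}}\mu(m)m^{-1/2-iu}(1-\tfrac{\log m}{\log T^{\theta}})$, and carries out the sum over $\ell$ by Poisson summation. The diagonal term $n=m=1$ contributes the main term $\sum_{\ell}\phi(\ell/T)\sim T\int\phi$; the dual sum and the genuinely off-diagonal terms are $o(T)$; and the only remaining potentially large contributions are the ``resonances'' $nm=e^{2\pi k/\alpha}\in\mathbbm{Z}$, $k\ge1$, when such integers occur. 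For a fixed integer $N>1$ one has $\sum_{m\mid N}\mu(m)=0$ and $\sum_{m\mid N}\mu(m)\log m=-\Lambda(N)$, so a resonance at $nm=N$ contributes a factor of size $\Lambda(N)/(\log T^{\theta})$ rather than $d(N)$; together with the rapid decay in $N$ this makes the total resonance contribution $O(T/\log T)=o(T)$. This is exactly the mechanism that removes the diophantine dependence from Theorem~\ref{thm:mollifiedmirage}. Hence $\mathcal{M}_1\sim T\int\phi$. (Alternatively one may first transfer $\mathcal{M}_1$ to $\int_{\mathbbm{R}}(\zeta M_\theta)(\tfrac12+i(\alpha t+\beta))\phi(t/T)\,\mathd t$ and then invoke the classical continuous mollified first moment.)

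Combining the three displays, for every fixed $\theta<\tfrac12$ and every $\phi$ as above,
\[
P_{\alpha,\beta}(T)\ \ge\ \frac{\bigl(T\int\phi\bigr)^{2}(1+o(1))}{T\cdot\bigl(1+\tfrac1\theta\bigr)T\int\phi\,(1+o(1))}\ =\ \frac{\theta}{1+\theta}\int\phi\ +\ o(1).
\]
Letting $\phi\uparrow\mathbbm{1}_{[1,2]}$ (so $\int\phi\to1$) and $\theta\uparrow\tfrac12$ (so $\tfrac{\theta}{1+\theta}\to\tfrac13$) shows $\liminf_{T\to\infty}P_{\alpha,\beta}(T)\ge\tfrac13$, which is the assertion. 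I expect no serious obstacle beyond what is already stated: the substantive point — and the whole reason the same proportion $\tfrac13$ is obtained as in the continuous (or unitary-family) setting — is the \emph{uniform} error term in Theorem~\ref{thm:mollifiedmirage}, with no loss depending on the diophantine properties of $\alpha$ and $\beta$. The only mild delicacy is that $\theta<\tfrac12$ and $\int\phi<1$ are necessarily strict, which is why $\tfrac13$ is reached only in the limit — hence the $o(1)$ in the statement.
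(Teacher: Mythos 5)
Your proposal is correct and follows essentially the same route as the paper: Cauchy--Schwarz between a mollified first moment and the mollified second moment $\mathcal{J}$, with Theorem~\ref{thm:mollifiedmirage} plus the Balasubramanian--Conrey--Heath-Brown evaluation handling the denominator and a Poisson-summation analysis of the first moment (where the M\"obius cancellation $\sum_{m\mid N}b(m)\ll d(N)\log N/\log T$ kills the resonant terms $mn=e^{2\pi k/\alpha}$) handling the numerator. The paper isolates the first-moment asymptotic as Proposition~\ref{prop:I}, but the mechanism you describe is exactly the one used there.
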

The proof of Theorem \ref{thm:nonvanishing} leads easily to the result below.
\begin{corollary}
Let $\alpha > 0$ and $\beta$ be real. Then, as $T \rightarrow \infty$,
$$
|\zeta(\tfrac 12 + i(\alpha\ell + \beta)| \geq \varepsilon (\log \ell)^{-1/2}
$$
for more than $(\tfrac 13 - C \varepsilon) T$ integers $T \leq \ell \leq 2T$, with
$C$ an absolute constant. 
\end{corollary}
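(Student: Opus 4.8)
We derive the corollary by refining the Cauchy--Schwarz argument behind Theorem~\ref{thm:nonvanishing} so as to keep track of the integers at which $\zeta$ is small. Fix $0<\theta<\tfrac12$ and a smooth $\phi$ supported in $[1,2]$ with $0\le\phi\le1$ and $\int\phi\ge1-\varepsilon$. In the proof of Theorem~\ref{thm:nonvanishing} one shows that the mollified first and second moments over the progression satisfy
$$
\mathcal{A}:=\sum_{\ell}\zeta M_{\theta}\bigl(\tfrac12+i(\alpha\ell+\beta)\bigr)\phi\bfrac{\ell}{T}=(c_{1}+o(1))\,T,\qquad \mathcal{J}=(c_{2}+o(1))\,T,
$$
with $c_{1},c_{2}>0$; the first asymptotic is the mollified first moment, which carries no diophantine dependence (the potential resonance term is damped by a factor $1/\log T$ coming from the mollifier weight), and the second is Theorem~\ref{thm:mollifiedmirage} combined with the classical evaluation of the continuous mollified second moment. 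The mollifier length $T^{\theta}$ is taken so that $c_{1}^{2}/c_{2}\to\tfrac13\int\phi$ as $\theta\uparrow\tfrac12$; this ratio is exactly what yields the constant $\tfrac13$ in Theorem~\ref{thm:nonvanishing}.

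Put $\mathcal{L}_{<}:=\{\ell:|\zeta(\tfrac12+i(\alpha\ell+\beta))|<\varepsilon(\log\ell)^{-1/2}\}$ and let $\mathcal{L}_{\ge}$ be its complement. If $\phi(\ell/T)\ne0$ then $\ell\ge T$, so $(\log\ell)^{-1/2}\le(\log T)^{-1/2}$, and hence on $\mathcal{L}_{<}$ we have $|\zeta M_{\theta}(\tfrac12+i(\alpha\ell+\beta))|\le\varepsilon(\log T)^{-1/2}|M_{\theta}(\tfrac12+i(\alpha\ell+\beta))|$. Since $M_{\theta}$ has length $T^{\theta}$ with $\theta<1$, Cauchy--Schwarz and the mean value theorem for Dirichlet polynomials give $\sum_{\ell}|M_{\theta}(\tfrac12+i(\alpha\ell+\beta))|\phi(\ell/T)\ll T\sqrt{\log T}$, so
$$
\Bigl|\sum_{\ell\in\mathcal{L}_{<}}\zeta M_{\theta}\bigl(\tfrac12+i(\alpha\ell+\beta)\bigr)\phi\bfrac{\ell}{T}\Bigr|\le\frac{\varepsilon}{\sqrt{\log T}}\sum_{\ell}\bigl|M_{\theta}\bigl(\tfrac12+i(\alpha\ell+\beta)\bigr)\bigr|\phi\bfrac{\ell}{T}\ll\varepsilon\,T.
$$
Consequently
$$
\Bigl|\sum_{\ell\in\mathcal{L}_{\ge}}\zeta M_{\theta}\bigl(\tfrac12+i(\alpha\ell+\beta)\bigr)\phi\bfrac{\ell}{T}\Bigr|\ge|\mathcal{A}|-O(\varepsilon T)\ge(c_{1}-C_{0}\varepsilon+o(1))\,T
$$
for a constant $C_{0}$.

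Finally, Cauchy--Schwarz in the reverse direction, together with $\phi\le\mathbbm{1}_{[1,2]}$, gives
$$
\Bigl|\sum_{\ell\in\mathcal{L}_{\ge}}\zeta M_{\theta}\bigl(\tfrac12+i(\alpha\ell+\beta)\bigr)\phi\bfrac{\ell}{T}\Bigr|^{2}\le\Bigl(\sum_{\ell\in\mathcal{L}_{\ge}}\phi\bfrac{\ell}{T}\Bigr)\mathcal{J}\le N_{\varepsilon}\cdot\mathcal{J},
$$
where $N_{\varepsilon}:=\#\{T\le\ell\le2T:|\zeta(\tfrac12+i(\alpha\ell+\beta))|\ge\varepsilon(\log\ell)^{-1/2}\}$. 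Inserting $\mathcal{J}=(c_{2}+o(1))T$ and the lower bound just obtained,
$$
N_{\varepsilon}\ge\frac{(c_{1}-C_{0}\varepsilon+o(1))^{2}}{c_{2}+o(1)}\,T=\Bigl(\frac{c_{1}^{2}}{c_{2}}-\frac{2C_{0}c_{1}}{c_{2}}\,\varepsilon+O(\varepsilon^{2})\Bigr)T+o(T),
$$
and letting $\theta\uparrow\tfrac12$ and $\int\phi\uparrow1$ makes the leading constant at least $\tfrac13-C\varepsilon$, which is the claim. There is no substantial new difficulty here: the argument is a routine refinement of the proof of Theorem~\ref{thm:nonvanishing}, the only new input being the elementary bound $\sum_{\ell}|M_{\theta}(\tfrac12+i(\alpha\ell+\beta))|\phi(\ell/T)\ll T\sqrt{\log T}$. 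The one point to check is that discarding $\mathcal{L}_{<}$ costs only $O(\varepsilon T)$ in the first moment while leaving $\mathcal{J}$ essentially unchanged, whereupon the constant $\tfrac13$ is inherited from Theorem~\ref{thm:nonvanishing} with an additive loss linear in $\varepsilon$.
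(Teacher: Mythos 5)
Your argument is correct and follows essentially the same route as the paper: isolate the set where $|\zeta|$ is small, bound its contribution to the mollified first moment via Cauchy--Schwarz against the mean square of $M_{\theta}$ alone (which costs only $O(\varepsilon T)$ after the factor $(\log \ell)^{-1/2}$ cancels the $\sqrt{\log T}$), and then apply Cauchy--Schwarz on the complement against the mollified second moment $\mathcal{J}$. The only cosmetic difference is that you obtain $\sum_{\ell}|M_{\theta}(\tfrac12+i(\alpha\ell+\beta))|^{2}\phi(\ell/T)\ll T\log T$ from a discrete mean value theorem for Dirichlet polynomials at the well-spaced points $\alpha\ell+\beta$, whereas the paper derives the same bound from its Proposition 2.
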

Theorem \ref{thm:nonvanishing} is proven by understanding both a mollifed discrete second moment (see Theorem \ref{thm:mollifiedmirage}) and a mollified discrete first moment.  Our methods extend without modification to prove the analogous result for Dirichlet $L$-functions. The constants $\tfrac 13$ represents the limits of the current technology - see for example \cite{IwaniecSarnak} for the case of non-vanishing of Dirichlet $L$-functions at the critical point.

Of course, we expect that $P_{\alpha, \beta}(T) = 1+O(T^{-1})$. Assuming the Riemann Hypothesis (RH),
Ford, Soundararajan and Zaharescu \cite{FordSoundararajanZaharescu} 
showed $P_{\alpha,\beta}(T) \geq \tfrac 12 + o(1)$ as $T \rightarrow \infty$. 
Assuming RH and Montgomery's Pair Correlation Conjecture
they've showed \cite{FordSoundararajanZaharescu} that $P_{\alpha,\beta}(T) \geq 1 - o(1)$ as $T \rightarrow \infty$.
Assuming a very strong hypothesis on the distribution of primes in short intervals, it is possible to show that $P_{\alpha, \beta}(T) = 1 - O(T^{-\delta})$ for some $\delta > 0$.

Note that the rigid structure of the arithmetic progression is important.  
Since there is a zero of 
$\zeta(s)$ in every interval of size essentially $(\log \log \log T)^{-1}$ in $[T, 2T]$  (see \cite{Littlewood}) minor perturbations of the arithmetic progression renders our 
result false.  

\subsection{Large and small values}
We now complement Theorem \ref{thm:nonvanishing} by exhibiting large and small values of $\zeta(s)$ at discrete points $\tfrac 12 + i(\alpha\ell + \beta)$ using Soundararajan's resonance method \cite{Soundararajan}. 
\begin{theorem}\label{thm:largesmall}
Let $\alpha > 0$ and $\beta$ be real. Then, for infinitely many $\ell > 0$, 
$$
|\zeta(\tfrac 12 + i(\alpha\ell+\beta))| \gg  \exp \bigg ( ( 1 + o(1)) \sqrt{\frac{\log \ell}{6 \log\log \ell}} \bigg )
$$
and for infinitely many $\ell$,
$$
|\zeta(\tfrac 12 + i(\alpha\ell+\beta))| \ll  \exp \bigg ( - (1 + o(1)) \sqrt{\frac{\log \ell}{6 \log\log \ell}} \bigg ).
$$
\end{theorem}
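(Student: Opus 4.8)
The plan is to apply Soundararajan's resonance method, adapted to the discrete average over the arithmetic progression $\alpha\ell+\beta$. First I would fix a smooth bump function $\phi$ supported in $[1,2]$ and set up the ratio
\[
\mathcal{R} = \frac{\sum_\ell \zeta(\tfrac12+i(\alpha\ell+\beta))\,|R(\tfrac12+i(\alpha\ell+\beta))|^2\,\phi(\ell/T)}{\sum_\ell |R(\tfrac12+i(\alpha\ell+\beta))|^2\,\phi(\ell/T)},
\]
where $R(s)=\sum_{n\le N} r(n) n^{-s}$ is a resonator Dirichlet polynomial of length $N=T^\nu$ for a small fixed $\nu>0$, with coefficients $r(n)$ chosen exactly as in Soundararajan's construction (supported on squarefree $n$ built out of primes in a window around $\log T/\log\log T$, with $r(p)$ roughly $\sqrt{\log T/\log\log T}/(\sqrt p\log p)$). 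If one can show $|\mathcal{R}|\gg \exp((1+o(1))\sqrt{\log T/(6\log\log T)})$, then since $\mathcal{R}$ is an average of values of $\zeta$, there must be some $\ell\in[T,2T]$ with $|\zeta(\tfrac12+i(\alpha\ell+\beta))|$ at least this large; letting $T\to\infty$ along a sequence gives infinitely many such $\ell$. The small-values statement follows by the same device applied to $1/\zeta$ — or, more robustly, by inserting into the resonance ratio the quantity $\zeta(\tfrac12+i(\alpha\ell+\beta))^{-1}$ replaced by a suitable finite Dirichlet polynomial approximation $\sum_{n\le T^\eta}\mu(n)n^{-s}$ and controlling the error, exactly as in the large-values case with $\mu$ in place of the trivial coefficient.

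The technical heart is evaluating the discrete first moment $\sum_\ell \zeta(\tfrac12+i(\alpha\ell+\beta))\,|R(\tfrac12+i(\alpha\ell+\beta))|^2\phi(\ell/T)$ and the discrete denominator $\sum_\ell |R(\tfrac12+i(\alpha\ell+\beta))|^2\phi(\ell/T)$. Here I would lean on the machinery already developed for Theorem \ref{thm:mirage} and Proposition \ref{prop:aver}: the key mechanism that converts a discrete sum $\sum_\ell g(\alpha\ell+\beta)\phi(\ell/T)$ into the continuous integral $\int g(\alpha t+\beta)\phi(t/T)\,dt$ plus a negligible error is Poisson summation, where the nonzero frequencies contribute terms weighted by $\widehat\phi$ of a large argument together with exponential sums of the shape $\sum_n c(n) n^{-1/2} e(-k\log n/(2\pi\alpha))$ or similar; these are controlled because $e^{2\pi/\alpha}$-type quantities are badly approximable in the relevant ranges, or simply because the Mellin/stationary-phase analysis forces the main contribution to $k=0$. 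Since the resonator has length $N=T^\nu$ with $\nu$ small and $\zeta$ is truncated to an approximate functional equation of length about $\sqrt{T}$, the Dirichlet polynomial $\zeta\cdot|R|^2$ has total length $T^{1/2+2\nu}<T$, so the off-diagonal frequencies in Poisson summation are genuinely negligible and the discrete average agrees with the continuous one up to $o$ of the main term. The continuous first moment and second moment of $\zeta\cdot|R|^2$ are then exactly Soundararajan's computations, yielding $\mathcal{R}\gg \exp((1+o(1))\sqrt{\log T/(6\log\log T)})$ after optimizing the resonator; the constant $6$ in the exponent is the standard output of balancing $M_1(R)^2$ against $M_0(R)$ with a resonator of length $T^{o(1)}$ (as opposed to the constant $1$ obtainable with length $T^{1/2-\varepsilon}$, which is unavailable here because we need room for $|R|^2$ and the $\zeta$-factor simultaneously inside length $T$).

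The main obstacle I anticipate is not the resonance optimization — that is routine once the averages are in hand — but rather making the transfer from discrete to continuous uniform in the resonator $R$, whose coefficients grow and whose length $T^\nu$ must be allowed to be a small positive power of $T$ rather than $T^{o(1)}$ if one wants the clean constant. Concretely, one must check that the Poisson error terms, which involve $\sum_{n\le T^{1/2+2\nu}} d_2(n)\,|$coefficient$|\, n^{-1/2}$ type sums against rapidly decaying $\widehat\phi$, really are $o$ of the main term $T\cdot M_0(R)$; this requires a bound on $M_0(R)=\sum_\ell |R|^2\phi(\ell/T)$ from below of the expected size $T\sum_{n\le N} r(n)^2$, which itself comes from the same Poisson argument, so there is a mild circularity to untangle by first establishing the denominator asymptotic directly. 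A secondary nuisance is the dependence on $\beta$: shifts by $\beta$ only rotate phases and do not affect the diagonal main term, but one should verify that no resonance among the off-diagonal frequencies is created by a special value of $\beta$ (the $\delta(\alpha,\beta)$ phenomenon in Theorem \ref{thm:mirage}); since that correction is a bounded multiplicative factor $1+\delta(\alpha,\beta)$ and affects only the constant in $\gg$, it is harmless for an $\Omega$-result, so I would simply absorb it. Modulo these checks, the argument goes through and delivers the stated bounds with the constant $1/6$ under the square root.
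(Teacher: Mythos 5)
Your overall strategy (Soundararajan's resonance ratio, transferred from a continuous to a discrete average via Poisson summation) is the one the paper uses, but there are two genuine gaps. First, the assertion that the nonzero Poisson frequencies are ``genuinely negligible'' because the Dirichlet polynomial $\zeta\cdot|R|^2$ has length $<T$ is false in general: a frequency $\ell\neq 0$ survives whenever there exist $m,n,h$ in range with $m/(nh)$ within $T^{-1+\varepsilon}$ of $e^{2\pi\ell/\alpha}$, and for $\alpha=2\pi/\log 2$ (say) this happens exactly, with no help from $\widehat\phi$. Nor can you ``absorb'' this as a bounded multiplicative factor $1+\delta(\alpha,\beta)$: that factor pertains to the plain second moment of Theorem \ref{thm:mirage}, whereas here the off-diagonal contribution to the resonated numerator and denominator is an \emph{additive} term involving the resonator coefficients along the convergents $(a_\ell,b_\ell)$, which for a generic resonator can be comparable to the main term and need not have a favourable sign. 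The paper's fix is a small but essential modification of the resonator itself: for each of the at most $2\log T$ relevant tuples $(a_\ell,b_\ell)$ one prime divisor of $a_\ell$ and one of $b_\ell$ is removed from the support of $r$, which forces $b(a_\ell)=0$ and kills the dangerous range identically, at negligible cost to the Euler product $\prod_p(1+b(p)/p)$. Relatedly, your accounting of the constant $6$ is off: it is not a generic feature of ``short'' resonators but comes from the specific length $N=T^{1/6-4\varepsilon}$ forced by the error term $T^{1+(1-3\delta)/2+4\varepsilon}$ in the discrete-to-continuous comparison (one needs $\delta=1/3+4\varepsilon$ in $N=T^{1/2-\delta}$).

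Second, your small-values argument does not work as stated. There is no unconditional Dirichlet-polynomial approximation to $1/\zeta$ on the critical line, and even granting one, producing a large value of $|\sum_{n\le T^{\eta}}\mu(n)n^{-1/2-it}|$ at some discrete point says nothing about $|\zeta|$ being small there (the point could even be a zero of $\zeta$, which is exactly what one cannot rule out). The paper instead keeps the numerator $D(s)=A(s)=\sum_{n\le T}n^{-s}\approx\zeta(s)$, so that $\mathcal{R}$ is genuinely squeezed between $\min_\ell|\zeta|+O(T^{-1/2})$ and $\max_\ell|\zeta|+O(T^{-1/2})$, and puts the sign twist into the \emph{resonator}, taking $b(n)=\mu(n)\sqrt{n}\,r(n)$ so that $\mathcal{R}=(1+o(1))\prod_{p\notin S}(1-L/(p\log p))=\exp(-(1+o(1))\sqrt{\log N/\log\log N})$, which forces the minimum to be small. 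You would need to adopt this device (or something equivalent) to obtain the small-values half of the theorem.
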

The $o(1)$ in
this result is independent of the diophantine properties of $\alpha$ and $\beta$. 
Since we expect $\zeta(\tfrac 12 + i(\alpha\ell+\beta)) \neq 0$ for essentially all $\ell$, it is interesting to produce values of $\ell$ at which $\zeta(\tfrac 12 + i(\alpha\ell+\beta))$ is extremely small. Furthermore, the large values of $\zeta(\tfrac 12 + i(\alpha\ell + \beta))$ over a discrete set of points above are almost of the same quality as the best results for large values of $\zeta(\tfrac 12 + it)$ with $t$ real.  In the latter case, the best result is due to Soundararajan \cite{Soundararajan}.  We have not tried to optimize in Theorem \ref{thm:largesmall} and perhaps the same methods might lead to the constant $1$ rather than $1 / \sqrt{6}$.

\subsection{Technical propositions}
The proofs of our Theorems rests on a technical Proposition, and its variant, which may be of independent interest.  With $B(s)$ defined as in (\ref{eqn:B(s)}), consider the difference between the discrete average and the continuous average,
$$
\mathcal{E} := \sum_{\ell} |(\zeta \cdot B)(\tfrac 12 + i(\alpha\ell + \beta))|^2
\phi \bigg (\frac{\ell}{T} \bigg ) - \int_{\mathbbm{R}} |(\zeta \cdot B)(\tfrac 12
+ i (\alpha t+ \beta))|^2 \phi \bigg ( \frac{t}{T} \bigg ) dt.
$$
Proposition \ref{prop:aver} below shows that understanding $\mathcal{E}$ boils down to understanding the behavior of sums of the form 
\begin{equation}\label{eqn:F}
F (a_{\ell},b_{\ell}, t) := \sum_{r \geqslant 1}
\frac{1}{r} \sum_{h, k \leqslant T^{\theta}} b (k) b (h)
\sum_{\substack{
    m, n \geqslant 1\\
    mk = a_{\ell} r\\
    nh = b_{\ell} r
}} W \bigg ( \frac{2 \pi mn}{\alpha t + \beta} \bigg )
\end{equation}
where $W(x)$ is a smooth function  defined as
$$
W(x) := \frac{1}{2\pi i} \int_{(\varepsilon)} x^{-w} \cdot G(w) \frac{dw}{w}
$$
with $G(w)$ an entire function of rapid decay along vertical lines
$G(x + iy) \ll_{x,A} |y|^{-A}$, such that
$G(w) = G(-w)$, $G(0) = 1$, and satisfying $G(\bar w) = \overline{G(w)}$ (to make $W(x)$ real valued for $x$ real). For example we can take $G(w) = e^{w^2}$.
Notice that $W(x) \ll 1$ for $x \leq 1$ and $W(x) \ll_A x^{-A}$ for $x > 1$. 

Of course, the expression in $\ref{eqn:F}$ should not depend on the choice of $W$.  In fact, $F(a_l, b_l, t)$ can also be written as
\begin{equation} \label{form}
\sum_{m,n \leq T^{\theta}} \frac{b(m)b(n)}{m n}
\cdot (m a_{\ell}, n b_{\ell}) \cdot 
 \mathcal{H} \bigg ( (\alpha t + \beta) \cdot 
\frac{(ma_{\ell},nb_{\ell})^2}{2\pi m a_{\ell} n b_{\ell}} \bigg )
\end{equation}
where $\mathcal{H}(x)$ is a smooth function such that,
$$
\mathcal{H}(x) = \begin{cases}
\tfrac 12 \cdot \log x + \gamma + O_A(x^{-A}) & \text{ if } x \gg 1 \\
O_A(x^A) & \text{ if } x \ll 1
\end{cases}
$$
As seen in a theorem of Balasubramanian, Conrey and Heath-Brown
\cite{BalasubramanianConreyHeathBrown}
the continuous $t$ average over $T \leq t \leq 2T$ 
of $|\zeta(\tfrac 12 + it)B(\tfrac 12 + it)|^2$ gives rise to (\ref{form})
with $a_{\ell} = 1 = b_{\ell}$. 
For technical reasons it is more convenient for us to work with
the smooth version (\ref{eqn:F}).

\begin{prop}\label{prop:aver}
Let $0<\theta <1/2$.  For each $\ell > 0 $, let $(a_{\ell},b_{\ell})$ denote (if it exists) the unique tuple of co-prime integers such that $a_{\ell}b_{\ell} > 1$, $b_{\ell} < T^{1/2 - \varepsilon} e^{-\pi \ell / \alpha}$ and
     \begin{equation} \label{condition4}
       \bigg | \frac{a_{\ell}}{b_{\ell}} - e^{2\pi \ell / \alpha} \bigg | \leq
       \frac{e^{2\pi \ell / \alpha}}{T^{1 - \epsilon}}.
     \end{equation} 
If such a pair $(a_{\ell},b_{\ell})$ exists, then let
$$H(\ell) 
= \frac{(a_{\ell}/b_{\ell})^{i\beta}}{\sqrt{a_{\ell}b_{\ell}}} \int_{- \infty}^{\infty} \phi \bigg (
\frac{t}{T} \bigg )
  \cdot \exp \bigg ( - 2\pi i t \bigg (
  \frac{\alpha \log  \frac{a_{\ell}}{b_{\ell}}}{2\pi} - \ell \bigg ) \bigg )
  \cdot F(a_{\ell},b_{\ell}, t) d t,
$$and otherwise set $H(\ell)=0$.
Then,
$$
\mathcal{E} =  4 \tRe   \sum_{\ell > 0}H(\ell) + O(T^{1 - \varepsilon}).
$$
\end{prop}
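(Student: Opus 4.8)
The idea is to expand $|(\zeta B)(\tfrac12+i\tau)|^2$ by a smoothed approximate functional equation and then apply Poisson summation in $\ell$, so that the continuous average is recovered from the zero frequency and $\mathcal E$ becomes a sum over nonzero frequencies. Using the functional equation for $\zeta(s+i\tau)\zeta(s-i\tau)$ (whose root number is $1$, and whose poles contribute residues exponentially small in $\tau$), one has, for $\tau$ large,
\[
|\zeta(\tfrac12+i\tau)|^2=2\sum_{m,n\geq1}\frac{1}{\sqrt{mn}}\Big(\frac{m}{n}\Big)^{i\tau}W\Big(\frac{2\pi mn}{\tau}\Big)+O(\tau^{-10}),
\]
with $W$ exactly as in the statement. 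Multiplying by $|B(\tfrac12+i\tau)|^2=\sum_{h,k\leq T^\theta}\frac{b(h)b(k)}{\sqrt{hk}}(k/h)^{i\tau}$ and truncating the $m,n$ sum at $mn\leq T^{1+\varepsilon}$ (the tail is $O(T^{-A})$ by the decay of $W$) expresses $|(\zeta B)(\tfrac12+i\tau)|^2$ as a finite sum of terms $\frac{b(h)b(k)}{\sqrt{hkmn}}(mk/nh)^{i\tau}W(2\pi mn/\tau)$, up to a negligible error. Setting $\tau=\alpha\ell+\beta$, multiplying by $\phi(\ell/T)$, interchanging summations, and applying Poisson summation to each inner sum $\sum_\ell\phi(\ell/T)(mk/nh)^{i(\alpha\ell+\beta)}W(2\pi mn/(\alpha\ell+\beta))$, one finds that the zero-frequency term, summed back over $(h,k,m,n)$, reproduces exactly $\int_{\mathbb R}|(\zeta B)(\tfrac12+i(\alpha t+\beta))|^2\phi(t/T)\,dt$, so that
\[
\mathcal E=2\sum_{\nu\neq0}\ \sum_{\substack{h,k\leq T^\theta\\ mn\leq T^{1+\varepsilon}}}\frac{b(h)b(k)}{\sqrt{hkmn}}\int_{\mathbb R}\phi\Big(\frac tT\Big)\Big(\frac{mk}{nh}\Big)^{i(\alpha t+\beta)}W\Big(\frac{2\pi mn}{\alpha t+\beta}\Big)e^{-2\pi i\nu t}\,dt+O(T^{-A}).
\]

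Next I would analyse the $t$-integral. Its phase is $(\alpha t+\beta)\log(mk/nh)-2\pi\nu t$, with derivative $\alpha\log(mk/nh)-2\pi\nu$, while the amplitude $\phi(t/T)W(2\pi mn/(\alpha t+\beta))$ is smooth and varies only on the scale $t\asymp T$ (here one uses $mn\leq T^{1+\varepsilon}$). Repeated integration by parts bounds the integral by $\ll_A T\bigl(1+T|\alpha\log(mk/nh)-2\pi\nu|\bigr)^{-A}$, so the only tuples contributing more than $O(T^{-A})$ are those with $\bigl|\tfrac{mk}{nh}-e^{2\pi\nu/\alpha}\bigr|\ll e^{2\pi\nu/\alpha}T^{-1+\varepsilon}$; in particular such tuples occur only for $0<|\nu|\ll\log T$, since $mk/nh\leq mk\leq T^{\theta+1+\varepsilon}$. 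As the summand is real, the $\nu<0$ contributions are the complex conjugates of the $\nu>0$ ones, whence $\mathcal E=4\,\tRe\sum_{\nu>0}(\cdots)+O(T^{-A})$. For fixed $\nu>0$ I would group the surviving tuples according to the reduced fraction $a/b:=mk/nh$, writing $mk=ar$, $nh=br$ with $r\geq1$; then $hkmn=abr^2$, and a short computation shows that the contribution of a given fraction $a/b$ is exactly
\[
\frac{(a/b)^{i\beta}}{\sqrt{ab}}\int_{\mathbb R}\phi\Big(\frac tT\Big)\exp\!\Big(-2\pi it\Big(\tfrac{\alpha\log(a/b)}{2\pi}-\nu\Big)\Big)\,F(a,b,t)\,dt
\]
(the inner sum over $h,k,m,n$ with $mk=ar$, $nh=br$ collapsing to $F(a,b,t)/\sqrt{ab}$ by the definition (\ref{eqn:F}); the sign in the exponent is immaterial after $\tRe$). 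This is precisely $H(\nu)$ when $(a,b)=(a_\nu,b_\nu)$, and here $a/b\neq1$, i.e. $ab>1$, because $\nu>0$.

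It remains to see that among all admissible reduced fractions $a/b$ satisfying the above Diophantine inequality, exactly one — if any — need be retained, namely the one with $b<T^{1/2-\varepsilon}e^{-\pi\nu/\alpha}$. Uniqueness of such a fraction is a Farey-type argument: two distinct reduced fractions with denominators $<T^{1/2-\varepsilon}e^{-\pi\nu/\alpha}$ differ by at least $1/(bb')\gg e^{2\pi\nu/\alpha}T^{-1+2\varepsilon}$, which for $T$ large exceeds $2e^{2\pi\nu/\alpha}T^{-1+\varepsilon}$, contradicting that both lie within $e^{2\pi\nu/\alpha}T^{-1+\varepsilon}$ of $e^{2\pi\nu/\alpha}$; this fraction, when it exists, is the pair $(a_\nu,b_\nu)$ of the statement, and its contribution is $H(\nu)$. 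The remaining admissible fractions have denominator in the range $T^{1/2-\varepsilon}e^{-\pi\nu/\alpha}\leq b\ll T^{1/2+\theta+\varepsilon}e^{-\pi\nu/\alpha}$ — the upper bound coming from $hkmn=abr^2\leq T^{1+2\theta+\varepsilon}$ together with $a\asymp be^{2\pi\nu/\alpha}$ — and one must bound their total contribution by $O(T^{1-\varepsilon})$.

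\textbf{Main obstacle.} This last point is where the real work lies and where the hypothesis $\theta<\tfrac12$ enters. A fraction with $b\asymp T^{1/2-\varepsilon}e^{-\pi\nu/\alpha}$ already produces a single term of size $\asymp T^{1+\varepsilon}/\sqrt{ab}\asymp T^{1/2+2\varepsilon}$, and multiplying by the naive count of such fractions (and summing over $\nu\ll\log T$) gives only the bound $O(T^{1/2+2\theta+\varepsilon})$, which is admissible just for $\theta<\tfrac14$. To reach the full range $\theta<\tfrac12$ one must extract more: one exploits (i) that the $t$-integral already decays like $T(T\lambda)^{-A}$ once $\lambda:=|\alpha\log(a/b)-2\pi\nu|$ exceeds $1/T$, so that only fractions within $\asymp e^{2\pi\nu/\alpha}/T$ of $e^{2\pi\nu/\alpha}$ are genuinely present; (ii) the realizability constraints $h,k\leq T^\theta$ and $mn\leq T^{1+\varepsilon}$, which for large $ab$ force $h$ and $k$ to be close to $T^\theta$ and to divide $ar$, $br$ with small $r$; and (iii) sharp counting (or a mean-value/large-sieve estimate) for the resulting sparse set of rational approximations to $e^{2\pi\nu/\alpha}$, combined with the bound $|F(a,b,t)|\ll T^\varepsilon$, which is itself a consequence of these divisor constraints together with the $\tfrac1r$-weight. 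Carrying out this bookkeeping — rather than any single clever identity — is the crux; the preceding steps (approximate functional equation, Poisson summation, integration by parts, and the Farey uniqueness argument) are routine.
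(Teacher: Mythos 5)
Your outline follows the paper's strategy faithfully through the approximate functional equation, Poisson summation, the identification of the zero frequency with the continuous average, the stationary-phase localization to fractions near $e^{2\pi\ell/\alpha}$, the Farey-spacing uniqueness of the small-denominator fraction, and the collapse of the inner sum to $F(a_\ell,b_\ell,t)/\sqrt{a_\ell b_\ell}$. The genuine gap is exactly where you locate the ``main obstacle'': you establish the bound $O(T^{1/2+2\theta+\varepsilon})$ for the fractions with $T^{1/2-\varepsilon}e^{-\pi\ell/\alpha}\leq b\ll T^{1/2+\theta+\varepsilon}e^{-\pi\ell/\alpha}$, which suffices only for $\theta<\tfrac14$, and then speculate that closing the range $\theta<\tfrac12$ requires exploiting realizability constraints on $h,k$, a bound $|F|\ll T^{\varepsilon}$, and a large-sieve or sharp-counting input. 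None of that is needed, and as written your proof does not reach $\theta<\tfrac12$.

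The resolution is an elementary dyadic decomposition that you skip by pairing the worst-case count with the worst-case term size, which occur at opposite ends of the denominator range. For $b\asymp N$ the number of admissible reduced fractions is $\ll e^{2\pi\ell/\alpha}N^2T^{-1+\eta}+1$ (Farey spacing at scale $N^{-2}$), while each such fraction contributes $\ll T^{1+\varepsilon}/\sqrt{ab}\asymp T^{1+\varepsilon}/(Ne^{(1-\varepsilon)\pi\ell/\alpha})$ using only $\hat f_{m,n,T}\ll T$ and the divisor bound $\sum_{r\le T^2}d(ar)d(br)/r\ll(Tab)^{\varepsilon}$. The block therefore contributes $\ll T^{\varepsilon+\eta}Ne^{(1+\varepsilon)\pi\ell/\alpha}+T^{1+\varepsilon}N^{-1}e^{-(1-\varepsilon)\pi\ell/\alpha}$; the first term is increasing in $N$ and maximal at $N\asymp T^{1/2+\theta+\varepsilon}e^{-\pi\ell/\alpha}$, the second is decreasing and maximal at $N\asymp T^{1/2-\eta}e^{-\pi\ell/\alpha}$, so summing over dyadic $N$ and then over $\ell\ll\log T$ gives $O(T^{1/2+\theta+\varepsilon'})$, which is $O(T^{1-\varepsilon})$ for every $\theta<\tfrac12$. (Two minor points you should also make explicit: the completion of the truncated sum $mn\leq T^{1+\varepsilon}$ to all $m,n\geq1$ inside $F$, justified by the decay of $W$; and the upper bound $b\ll T^{1/2+\theta+\varepsilon}e^{-\pi\ell/\alpha}$ itself, which comes from $ab\leq mnhk\leq T^{1+2\theta+\varepsilon}$ together with $a\asymp be^{2\pi\ell/\alpha}$ — you do state this correctly.)
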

More generally we can consider
$$
\mathcal{E}' = \sum_{\ell} | B(\tfrac 12 + i(\alpha\ell + \beta))|^2
\phi \bigg (\frac{\ell}{T} \bigg ) - \int_{\mathbbm{R}} |B(\tfrac 12
+ i (\alpha t + \beta))|^2 \phi \bigg ( \frac{t}{T} \bigg ) dt.
$$
In this case our results depend on
$$
F'(a_{\ell},b_{\ell}) := \sum_{r \geq 1} \frac{b(a_{\ell} r)b(b_{\ell} r)}{r}
$$
where we adopted the convention that $b(n) = 0$ for $n > T^{\theta}$. Then the analogue
of Proposition 1 is stated below.
\begin{prop}
Let $0 < \theta < 1$. For each $\ell > 0$ let $(a_{\ell}, b_{\ell})$ denote
(if it exists) the unique tuple of co-prime integers such that $a_{\ell}b_{\ell} > 1$, $b_{\ell} < T^{1/2 - \varepsilon} e^{-\pi \ell / \alpha}$ and
     \begin{equation} \label{condition4}
       \bigg | \frac{a_{\ell}}{b_{\ell}} - e^{2\pi \ell / \alpha} \bigg | \leq
       \frac{e^{2\pi \ell / \alpha}}{T^{1 - \epsilon}}.
     \end{equation} 
Then,
$$
\mathcal{E}' = 2 \Re \sum_{\ell > 0} 
\frac{(a_{\ell}/b_{\ell})^{i\beta}}{\sqrt{a_{\ell}b_{\ell}}} \cdot \hat{\phi} \bigg ( \frac{\alpha \log \frac{a_{\ell}}{b_{\ell}}}{2\pi}
- \ell \bigg ) F^{'}(a_{\ell},b_{\ell}) + O(T^{1 - \varepsilon})
$$
where in the summation over $\ell$ we omit the terms for which the
pair $(a_{\ell},b_{\ell})$ does not exist.
\end{prop}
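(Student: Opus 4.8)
The plan is to open the square of the Dirichlet polynomial, apply Poisson summation in the variable $\ell$, and match the nonzero Poisson frequencies with the rational approximations $(a_\ell,b_\ell)$ of $e^{2\pi\ell/\alpha}$. Since the $b(\cdot)$ are real,
\[
|B(\tfrac12+i(\alpha\ell+\beta))|^2=\sum_{m,n\leq T^\theta}\frac{b(m)b(n)}{\sqrt{mn}}\Big(\frac nm\Big)^{i\beta}e^{i\alpha\ell\log(n/m)},
\]
and similarly for the integral, with $\sum_\ell\phi(\ell/T)$ replaced by $\int_{\mathbb R}\phi(t/T)\,dt$. Applying Poisson summation to $\sum_\ell\phi(\ell/T)e^{i\alpha\ell\log(n/m)}$, the zero frequency contributes exactly $\int_{\mathbb R}\phi(t/T)e^{i\alpha t\log(n/m)}\,dt$, which summed against the rest reproduces the continuous average. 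Hence
\[
\mathcal E'=\sum_{m,n\leq T^\theta}\frac{b(m)b(n)}{\sqrt{mn}}\Big(\frac nm\Big)^{i\beta}\sum_{k\neq0}T\,\hat\phi\!\Big(T\big(k-\tfrac{\alpha\log(n/m)}{2\pi}\big)\Big),
\]
where $\hat\phi$ denotes the (Schwartz) Fourier transform of $\phi$.

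Rapid decay of $\hat\phi$ shows that the terms with $|k-\tfrac{\alpha\log(n/m)}{2\pi}|>T^{\varepsilon-1}$ contribute, in total, only $O_A(T^{-A})$; for the rest one has $|n/m-e^{2\pi k/\alpha}|\leq e^{2\pi k/\alpha}T^{\varepsilon-1}$, and as $n/m\in[T^{-\theta},T^\theta]$ only $|k|\ll\log T$ survive. Next I would factor out $r=\gcd(m,n)$, writing $m=rb$, $n=ra$ with $(a,b)=1$; because $b(n)=0$ for $n>T^\theta$ the inner sum over $r$ equals $F'(a,b)=\sum_r b(ra)b(rb)/r$ with no truncation, so that
\[
\mathcal E'=\sum_{k\neq0}T\sum_{\substack{(a,b)=1\\ ab>1}}\frac{(a/b)^{i\beta}}{\sqrt{ab}}\,F'(a,b)\,\hat\phi\!\Big(T\big(k-\tfrac{\alpha\log(a/b)}{2\pi}\big)\Big)+O_A(T^{-A}),
\]
the pair $a=b=1$ being harmless since it forces $k=0$.

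The main obstacle is to isolate $(a_k,b_k)$ from all the other contributing coprime pairs. Fix $k>0$, so $a/b\approx e^{2\pi k/\alpha}>1$. A Farey-spacing argument gives uniqueness of a contributing pair with $b<T^{1/2-\varepsilon}e^{-\pi k/\alpha}=:B_0$: two such fractions differ by at least $1/(bb')>B_0^{-2}=e^{2\pi k/\alpha}T^{2\varepsilon-1}$ yet both lie within $e^{2\pi k/\alpha}T^{\varepsilon-1}$ of $e^{2\pi k/\alpha}$, impossible once $T^\varepsilon>2$; this is the $(a_k,b_k)$ of the statement. Every other contributing pair has $b\geq B_0$, so $\sqrt{ab}\gg B_0e^{\pi k/\alpha}$, while $a\leq T^\theta$ forces $b\leq T^\theta e^{-2\pi k/\alpha}$. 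For these I would count dyadically in $b$ using $\#\{(a,b)=1:\ b\asymp B,\ |a/b-e^{2\pi k/\alpha}|\leq e^{2\pi k/\alpha}T^{\varepsilon-1}\}\ll e^{2\pi k/\alpha}T^{\varepsilon-1}B^2+1$ (reduced fractions in a short interval are well spaced), and bound each pair by $T|F'(a,b)|/\sqrt{ab}\ll T^{1+o(1)}/(Be^{\pi k/\alpha})$ via $|F'(a,b)|\ll d_A(a)d_A(b)(\log T)^{O(1)}\ll T^{o(1)}$; summing over dyadic $B\in[B_0,\,T^\theta e^{-2\pi k/\alpha}]$ and then over the $O(\log T)$ relevant $k$ bounds the total contribution of the non-distinguished pairs by $\ll T^{\theta+\varepsilon+o(1)}+T^{1/2+\varepsilon+o(1)}\log T=O(T^{1-\varepsilon})$ for every $\theta<1$. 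This Diophantine estimate is where the cut-off $b_\ell<T^{1/2-\varepsilon}e^{-\pi\ell/\alpha}$, the approximation condition defining $(a_\ell,b_\ell)$, and the weight $1/\sqrt{ab}$ all enter; it is the analogue of the corresponding step in the proof of Proposition \ref{prop:aver}, but lighter, since here no $\zeta$-factor (hence no approximate functional equation) is present.

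What is left is $T\sum_{k\neq0}\frac{(a_k/b_k)^{i\beta}}{\sqrt{a_kb_k}}F'(a_k,b_k)\,\hat\phi\big(T(k-\tfrac{\alpha\log(a_k/b_k)}{2\pi})\big)+O(T^{1-\varepsilon})$, summed over the $k$ for which $(a_k,b_k)$ exists. Finally, since $b(\cdot)$ and $\phi$ are real, the frequency $-k$ is the complex conjugate of the frequency $k$: pair $(a,b)\leftrightarrow(b,a)$ and use $F'(a,b)=F'(b,a)\in\mathbb R$, $(b/a)^{i\beta}=\overline{(a/b)^{i\beta}}$, and $\hat\phi(-\xi)=\overline{\hat\phi(\xi)}$. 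Hence the sum over $k\neq0$ collapses to $2\,\Re$ of the sum over $k>0$, yielding the stated formula, the Fourier integral $T\hat\phi(T(\,\cdot\,))$ of $\phi(\cdot/T)$ being exactly the transform appearing in Proposition \ref{prop:aver} in the degenerate case where $F$ does not depend on $t$.
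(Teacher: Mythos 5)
Your argument is correct and is precisely the route the paper intends: the paper omits the proof of this proposition, noting only that it is an easier version of the proof of Proposition \ref{prop:aver}, and your steps (Poisson summation in $\ell$, the zero frequency recovering the continuous average, grouping $m,n$ by their gcd into coprime pairs, Farey spacing to isolate the unique small-denominator pair $(a_\ell,b_\ell)$, and a dyadic Farey count to dispose of the pairs with $b\geq T^{1/2-\varepsilon}e^{-\pi\ell/\alpha}$) mirror exactly the structure of Section 6 with the approximate functional equation stripped out. Your closing remark correctly resolves the one notational wrinkle, namely that the $\hat\phi$ in the statement is the transform of $t\mapsto\phi(t/T)$, i.e.\ $T\hat\phi(T\cdot)$ in the unscaled normalization.
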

The proof of Proposition 2 is very similar (in fact easier!) than that of Proposition 1, and for this reason we omit it. 

One can ask about the typical distribution of $\log \zeta(\tfrac 12 + i(\alpha \ell + \beta))$ . This question is out of reach if we focus on the real part of
$\log \zeta(s)$ since we cannot even guarantee that almost all $\tfrac 12 + i(\alpha\ell + \beta)$ are not zeros of the Riemann zeta-function. On the Riemann Hypothesis, using Proposition 2 and Selberg's methods, one can prove a central limit theorem for
$S(\alpha \ell + \beta)$ with $T \leq \ell \leq 2T$.  We will not pursue this application here.

We deduce Theorems \ref{mirage2} and \ref{thm:mirage} from Proposition \ref{prop:aver} in Section 2.  We then prove Theorem \ref{thm:mollifiedmirage} in Section 3, complete the proof of Theorem \ref{thm:nonvanishing} in Section 4, and prove Theorem \ref{thm:largesmall} in Section 5.  Finally, we prove Proposition \ref{prop:aver} in Section 6. 

\section{Proof of Theorems \ref{mirage2} and \ref{thm:mirage}}
\begin{proof}[Proof of Theorem \ref{mirage2}]
Set $\alpha = 1$ and $\beta = 0$. 
By Proposition \ref{prop:aver} it is enough to show that 
$\mathcal{E} \ll T (\log T)^{-A}$. 
Since $W(x) \ll x^{-A}$ for $x > 1$ and $W(x) \ll 1$ for $x \leq 1$, we have, for $T \leq t \leq 2T$
\begin{equation*}
F(a_{\ell},b_{\ell},t)  \ll 1 + \sum_{r \geq 1} \frac{1}{r} \sum_{h, k \leq T^{\theta}}
|b(k)b(h)| \sum_{\substack{m ,n \leq T^{1 + \varepsilon}\\
  m k = a_{\ell} r \\ n h = b_{\ell} r}} 1
\ll \sum_{r \leq T^{2}} \frac{c(a_{\ell} r)c(b_{\ell} r)}{r} + 1
\end{equation*}
where
$
c(n) := \sum_{d | n} |b(d)| \ll  d_{A + 1}(n)
$.
Therefore $F(a_{\ell},b_{\ell},t) \ll (a_{\ell} b_{\ell})^{\varepsilon} T (\log T)^{B}$.
for some large $B > 0$.  
It thus follows by Proposition \ref{prop:aver}, that
$$
\mathcal{E} \ll T (\log T)^{B} \cdot \sum_{\ell > 0} (a_{\ell} b_{\ell})^{-1/2 + \varepsilon}
$$
Because of (\ref{condition4}) we have $a_{\ell} b_{\ell} \gg e^{2\pi \ell}$. Therefore
the $\ell$'s with $\ell \geq (\log \log T)^{1 + \varepsilon}$ 
contribute $\ll_{A} T (\log T)^{-A}$. We can therefore subsequently assume
that $\ell \ll (\log \log T)^{1 + \varepsilon}$. 
In order to control $a_{\ell}$ and $b_{\ell}$, when 
$\ell \leq (\log \log T)^{1 + \varepsilon}$ we appeal to a result of Waldschmidt
(see \cite{Waldschmidt}, p. 473),
\begin{equation} \label{wald}
\bigg | e^{\pi m} - \frac{p}{q} \bigg | \geq \exp\bigg ( -2^{72} \log(2m)
\log p \cdot \log \log p \bigg ).
\end{equation}
Therefore if condition (\ref{condition4}) is satisfied then
$
e^{2\pi \ell} T^{-1 + \varepsilon} \geq \exp ( - c (\log \ell)
\cdot (\log a_{\ell}) (\log \log a_{\ell}) )
$
Therefore, using that $\ell \leq (\log \log T)^{1 + \varepsilon}$ we
get $(\log a_{\ell}) \cdot (\log \log a_{\ell}) \gg \log T / (\log \log T)^{ \varepsilon}$, and hence
$\log a_{\ell} \gg \log T / (\log \log T)^{1 + \varepsilon}$. 
Notice also that (\ref{condition4})
implies that $a_{\ell} b_{\ell} \gg e^{2\pi \ell}$, so that
$\sum_{\ell > 0} (a_{\ell} b_{\ell})^{-\alpha} = O_{\alpha}(1)$ for any $\alpha > 0$.  
Combining these observations we find
$$
\sum_{0 < \ell < (\log\log T)^{1 + \varepsilon}} 
  (a_{\ell} b_{\ell})^{-1/2 + \varepsilon} \ll e^{ - c \log T / (\log \log T)^{1 + \varepsilon}}
\sum_{\ell > 0} (a_{\ell} b_{\ell})^{-1/4} \ll e^{-c \log T / (\log \log T)^{1 + \varepsilon}}.
$$
Thus $\mathcal{E} \ll_{A} T (\log T)^{-A}$ for any fixed $A > 0$, 
as desired.
\end{proof}
It is possible to generalize this theorem to other
progressions, for example to those for which $2\pi / \alpha$ is algebraic.
We refer the reader to \cite{Waldschmidt} for the necessary results in
diophantine approximation.

\begin{proof}[Proof of Theorem \ref{thm:mirage}]
Set $B(s) = 1$ in Proposition \ref{prop:aver}. Then, keeping notation
as in Proposition \ref{prop:aver}, we get
$$
\sum_{\ell} |\zeta(\tfrac 12 + i(\alpha \ell + \beta))|^2
\cdot \phi \bfrac{\ell}{T} = \int_{\mathbbm{R}}
|\zeta(\tfrac 12 + i(\alpha t + \beta))|^2 \cdot
\phi \bfrac{t}{T} dt + \mathcal{E}
$$
The main term is $\sim \hat{\phi}(0) T \log T$. 
It remains to understand $\mathcal{E}$. 
\\
\\
\noindent\textbf{First case}. First suppose that 
$e^{2\pi\ell/\alpha}$ is irrational for all
$\ell > 0$.
Since $b(k) = 1$ if $k = 1$ and $b(k) = 0$ otherwise
it is easy to see that $F(a_{\ell},b_{\ell},t) \ll T \log T$
uniformly in $a_{\ell},b_{\ell}$ and $T \leq t \leq 2T$. Thus,
$$
\mathcal{E} \ll T \log T \sum_{\ell > 0} (a_{\ell} b_{\ell})^{-1/2}
$$
It remains to show that $\sum_{\ell > 0} (a_{\ell} b_{\ell})^{-1/2} = o(1)$
as $T \rightarrow \infty$. Let $\varepsilon  > 0$ be given. 
Since $a_{\ell} b_{\ell} \gg e^{2\pi \ell / \alpha}$ we can find an $A$ such that
$
\sum_{\ell > A} (a_{\ell} b_{\ell})^{-1/2} \leq \varepsilon
$.
For the remaining integers $\ell \leq A$ notice that $e^{2\pi \ell / \alpha}$
is irrational for each $\ell \leq A$. Therefore for each $\ell \leq A$, 
\begin{equation} \label{condition5}
\bigg | \frac{a_{\ell}}{b_{\ell}} - e^{2\pi \ell / \alpha} \bigg | \leq
\frac{e^{2\pi \ell / \alpha}}{T^{1 - \varepsilon}}
\end{equation}
implies
that $a_{\ell} b_{\ell} \rightarrow \infty$. It follows that
$\sum_{\ell \leq A} (a_{\ell} b_{\ell})^{-1/2} \leq \varepsilon$ once $T$ is large enough.
We conclude that $\sum_{\ell > 0} (a_{\ell} b_{\ell})^{-1/2} = o(1)$, and hence
that $\mathcal{E} = o(T \log T)$ as desired.
\\
\\
\noindent
\textbf{Second case}. 
Now consider the case that
$e^{2\pi\ell_0 / \alpha}$ is rational for some $\ell_0$.
Write 
\begin{equation} \label{contradictus}
\alpha = \frac{2\pi \ell_0}{\log (m/n)}
\end{equation}
with co-prime $m$ and
$n$ and $|m|$ minimal. Let $k$ be the maximal positive integer
such that $m/n = (r/s)^k$ with $r, s$ co-prime. Then,
$$
\alpha = \frac{\ell_0}{k} \cdot \frac{2\pi}{\log(r/s)}.
$$Let $d = (\ell_0, k)$.  Note that $d=1$ since otherwise, we may replace $\ell_0$ by $\ell_0/d$ and $m$ and $n$ by $m^{1/d}$ and $n^{1/d}$ in 
(\ref{contradictus})
 which contradicts the minimality condition on $|m|$.

For each $\ell$ divisible by $\ell_0$
the integers $a_{\ell} = m^{\ell / \ell_0}$ and $b_{\ell} = n^{\ell / \ell_0}$
satisfy (\ref{condition5}) because
$e^{2\pi \ell / \alpha} = (m/n)^{\ell / \ell_0}$. 
For the remaining integers $\ell$ not divisible by $\ell_0$, $e^{2\pi \ell / \alpha} = (r/s)^{k \ell / \ell_0}$ is irrational, since $\ell_0|k\ell$ if and only if $\ell_0|\ell$.
We split $\mathcal{E}$ accordingly
$$
\mathcal{E} = 4 \tRe \sum_{\substack{\ell > 0 \\ \ell_0 | \ell}} H(\ell)
+ 4 \tRe \sum_{\substack{\ell > 0 \\ \ell_0 \nmid \ell}} H(\ell)
$$
The second sum is $o(T\log T)$ as can be seen by repeating 
the same argument as in the first case. 
As for the first sum, we find that for each $\ell$ divisible by $\ell_0$,
$$
H(\ell) = 2 \tRe \bigg ( \frac{(m/n)^{i\beta}}{\sqrt{m n}} \bigg )^{\ell / \ell_0}
\cdot \hat{\phi}(0) T \log T + O \bigg ( \frac{\ell \log m n}{(m n)^{\ell/2\ell_0}} \cdot T \bigg ).
$$
Therefore
\begin{align*}
\sum_{\substack{\ell > 0 \\ \ell_0 | \ell}} H(\ell) & = 2 \hat{\phi}(0) T \log T \cdot
\sum_{\ell > 0} \bigg ( \frac{(m/n)^{i\beta}}{\sqrt{m n}} \bigg )^{\ell} + O(T) \\
& = \hat{\phi} (0) T \log T \cdot  \frac{2\cos(\beta \log (m / n)) \sqrt{m n} - 2}
{m n + 1 - 2\sqrt{m n}\cos(\beta \log (m / n))} + O(T)
\end{align*}
giving the desired estimate for $\mathcal{E}$. 
\end{proof}

\section{Proof of Theorem \ref{thm:mollifiedmirage}}
Recall that in the notation of Proposition \ref{prop:aver},
$$
F (a_{\ell},b_{\ell}, t) := \sum_{r \geqslant 1}
\frac{1}{r} \sum_{h, k \leqslant T^{\theta}} b (k) b (h)
\sum_{\substack{
    m, n \geqslant 1\\
    mk = a_{\ell} r\\
    nh = b_{\ell} r
}} W \bigg ( \frac{2 \pi mn}{\alpha t + \beta} \bigg ) $$
The lemma below, provides a bound for $F$ when the coefficients $b(n)$
are the coefficients of the mollifiers $M_{\theta}(s)$, that is
$$
b(n) = \mu(n) \cdot \bigg ( 1 - \frac{\log n}{\log T^{\theta}} \bigg )
$$
and $b(n) = 0 $ for $n > T^{\theta}$. 
\begin{lem}\label{lemma:saving}
 For any $a_{\ell}, b_{\ell} \in \mathbb{N}$ with $(a_{\ell}, b_{\ell})=1$ and $a_{\ell}b_{\ell} > 1$, 
 uniformly in $T \leq t \leq 2T$, we have that
$$
F (a_{\ell},b_{\ell},t) \ll (a_{\ell} b_{\ell})^{\varepsilon} \cdot T (\log T)^{- 1 + \varepsilon}. 
$$
%
\end{lem}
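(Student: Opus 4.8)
The plan is to start from the closed form (\ref{form}),
$$F(a_\ell,b_\ell,t)=\sum_{m,n\le T^\theta}\frac{b(m)b(n)}{mn}\,(ma_\ell,nb_\ell)\,\mathcal H\!\left((\alpha t+\beta)\frac{(ma_\ell,nb_\ell)^2}{2\pi\,ma_\ell\,nb_\ell}\right),$$
now with the mollifier coefficients $b(n)=\mu(n)\bigl(1-\tfrac{\log n}{\log T^\theta}\bigr)=\mu(n)\tfrac{\log(T^\theta/n)}{\log T^\theta}$. First I would discard the parts of $F$ lying outside the logarithmic regime of $\mathcal H$: where $(ma_\ell,nb_\ell)^2\ll ma_\ell nb_\ell/T$ one uses $\mathcal H\ll_A x^A$ with $A$ large, and in the regime $x\gg 1$ one drops the $O_A(x^{-A})$ tail; together with $|b(n)|\le 1$ and $(ma_\ell,nb_\ell)\le\sqrt{ma_\ell\cdot nb_\ell}$ these pieces contribute $\ll(a_\ell b_\ell)^\varepsilon$, and since the $a_\ell$-part of $(ma_\ell,nb_\ell)$ divides $n$ and its $b_\ell$-part divides $m$ (so $(ma_\ell,nb_\ell)\le mn\le T^{2\theta}$), the whole of $F$ is in fact negligible once $a_\ell b_\ell\gg T^{1+\varepsilon}$. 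In the remaining range I substitute $\tfrac12\log x+\gamma$ for $\mathcal H(x)$ and expand
$$\tfrac12\log x+\gamma=C+\log(ma_\ell,nb_\ell)-\tfrac12\log m-\tfrac12\log n,\qquad C:=\tfrac12\log\tfrac{\alpha t+\beta}{2\pi}+\gamma-\tfrac12\log(a_\ell b_\ell),$$
so that, up to $O((a_\ell b_\ell)^\varepsilon)$, $F$ becomes a combination $C\,S_0+S_{\mathrm{g}}-\tfrac12S_m-\tfrac12S_n$ of the four sums $S_\ast=\sum_{m,n\le T^\theta}\tfrac{b(m)b(n)}{mn}(ma_\ell,nb_\ell)L_\ast(m,n)$ with $L_\ast\in\{1,\log(ma_\ell,nb_\ell),\log m,\log n\}$ and $C\ll\log T+\log(a_\ell b_\ell)$.

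In each $S_\ast$ I would open the gcd by $(ma_\ell,nb_\ell)=\sum_{e\mid(ma_\ell,nb_\ell)}\varphi(e)$ (and, in $S_{\mathrm{g}}$, also $\log(ma_\ell,nb_\ell)=\sum_{\delta\mid(ma_\ell,nb_\ell)}\Lambda(\delta)$) and swap summations. Since $(a_\ell,b_\ell)=1$ and $e$ (paired below with a $\mu$) will be squarefree, $e\mid ma_\ell\iff e_a\mid m$ with $e_a:=e/(e,a_\ell)$, and $e\mid nb_\ell\iff e_b\mid n$ with $e_b:=e/(e,b_\ell)$, which factors each double sum. The one-dimensional pieces are mollifier analogues of Mertens' sum: by Perron and the fact that $1/\zeta(s)$ vanishes simply at $s=1$ — cancelling the double pole produced by the weight $\log(T^\theta/m)$ and leaving a simple pole with residue $\prod_{p\mid e_a}(1-1/p)^{-1}$ — one obtains
$$\sum_{\substack{m\le T^\theta\\ e_a\mid m}}\frac{b(m)}{m}=\frac{\mu(e_a)}{\varphi(e_a)\log T^\theta}+\mathrm{Err},\qquad \sum_{\substack{m\le T^\theta\\ e_a\mid m}}\frac{b(m)\log m}{m}=-\frac{\mu(e_a)}{\varphi(e_a)}+\mathrm{Err},$$
with errors that decay with $T^\theta/e_a$ (only crude bounds are needed here, e.g.\ from the classical zero-free region, or even elementary estimates). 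Plugging the main terms back, the key point is that the \emph{leading} contributions of $S_0,S_{\mathrm{g}},S_m,S_n$ all cancel when $a_\ell b_\ell>1$: writing $e=g_ag_b\tilde e$ with $g_a=(e,a_\ell)$, $g_b=(e,b_\ell)$, $(\tilde e,a_\ell b_\ell)=1$, and using $(e,a_\ell)(e,b_\ell)=(e,a_\ell b_\ell)$ together with multiplicativity, one finds e.g.\ $S_m\sim-\tfrac1{\log T^\theta}\sum_e\varphi(e)\tfrac{\mu(e_a)\mu(e_b)}{\varphi(e_a)\varphi(e_b)}$ and
$$\sum_{e}\varphi(e)\frac{\mu(e_a)\mu(e_b)}{\varphi(e_a)\varphi(e_b)}=\Bigl(\sum_{g\mid\mathrm{rad}(a_\ell)}\mu(g)\Bigr)\Bigl(\sum_{g'\mid\mathrm{rad}(b_\ell)}\mu(g')\Bigr)\sum_{\substack{\tilde e\ \text{squarefree}\\(\tilde e,a_\ell b_\ell)=1}}\frac{\mu^2(\tilde e)}{\varphi(\tilde e)},$$
which vanishes since $\sum_{g\mid\mathrm{rad}(a_\ell)}\mu(g)=\mathbbm{1}_{a_\ell=1}$ and $a_\ell b_\ell>1$ (whereas for $a_\ell=b_\ell=1$ this sum is $\asymp\log T$, consistent with $F(1,1,t)\asymp 1$); the analogous identities, now carrying an extra $\Lambda(\delta)$ or $\log$, kill the leading pieces of $S_0,S_{\mathrm{g}},S_n$. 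What is left is governed by the products in which at least one factor is $\mathrm{Err}$, by the truncation range where these Mertens asymptotics are unavailable, and by the already-bounded $\mathcal H$-tails; summing over $e$ (and $\delta$), which converges because $\sum_e\varphi(e)(e,a_\ell b_\ell)e^{-2}(\log e)^{O(1)}\ll(a_\ell b_\ell)^\varepsilon$, this is $\ll(a_\ell b_\ell)^\varepsilon(\log T)^{-1+\varepsilon}$, which in particular yields the stated estimate.

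I expect the real work to be in the error bookkeeping rather than in the structure: making every cancellation uniform in $a_\ell,b_\ell$ — the divisor functions $d\bigl((e,a_\ell^\infty)\bigr)$, $(a_\ell,n)$, $(b_\ell,m)$, and the $\Lambda$-convolution appearing in $S_{\mathrm{g}}$, must all be absorbed into the single factor $(a_\ell b_\ell)^\varepsilon$ — and, above all, handling the truncation regime $e_a\asymp T^\theta$ (equivalently $T^\theta/e_a$ small), where no Möbius cancellation is available inside the inner sum; there one must exploit that the surviving $e$ are large, so that the convergence of the $e$-sum absorbs the loss, instead of bounding the inner sum directly. A minor point is the uniform justification of the passage from (\ref{eqn:F}) to (\ref{form}) with the explicit $\mathcal H$; the hypothesis $a_\ell b_\ell>1$ enters precisely here and in the cancellation above, guaranteeing the absence of a genuine main term.
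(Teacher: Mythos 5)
Your plan is correct in outline and reaches the intended bound, but by a genuinely different route from the paper. The paper writes $F$ as a triple Mellin integral, factors the arithmetic sum as $\frac{\zeta(1+2w)\zeta(1+z_1+z_2)}{\zeta(1+w+z_1)\zeta(1+w+z_2)}\tilde\eta(w,z_1,z_2)F(a_\ell b_\ell,w,z_1,z_2)$, and wins the factor $\log T$ by noting that $a_\ell>1$ forces $F(a_\ell b_\ell,0,0,z_2)=0$, so the integrand is holomorphic at $z_1=0$ and the $z_1$-contour can be pushed to $\tRe z_1=-c/\log\log T$ inside the classical zero-free region. Your identity $\sum_{g\mid\mathrm{rad}(a_\ell)}\mu(g)=0$ is exactly the same vanishing --- it is the Euler factor $\prod_{p\mid a_\ell}(p^{-w}-p^{-z_1})$ evaluated at $w=z_1=0$ --- realized combinatorially after opening the gcd, with the zero-free region entering only through the classical Mertens-type asymptotics for $\sum_{d\mid m}\mu(m)m^{-1}\log(N/m)$. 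What the paper's formulation buys is cleaner uniformity in $a_\ell,b_\ell$ (everything is packaged into $F(a_\ell b_\ell,w,z_1,z_2)$ and bounded by $(a_\ell b_\ell)^{\varepsilon}$ on the contours); what yours buys is transparency about the source of the cancellation, and you correctly observe that the factor $T$ in the stated bound is superfluous (the paper's own proof produces, and its application in Theorem 3 requires, the bound without it).

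Two points need repair before this becomes a proof. First, the displayed convergence claim $\sum_e\varphi(e)(e,a_\ell b_\ell)e^{-2}(\log e)^{O(1)}\ll(a_\ell b_\ell)^{\varepsilon}$ is false as written: writing $e=g_ag_b\tilde e$ the summand is $\asymp(\log e)^{O(1)}/\tilde e$, so the unrestricted sum is of size $(a_\ell b_\ell)^{\varepsilon}(\log T)^{O(1)}$, which would destroy the saving. What actually rescues the argument is that after the main terms cancel, every surviving term carries its own decay --- a factor $1/\log N$ from each Mertens main term, or an error factor decaying in $N/e_a$ --- so the \emph{weighted} $e$-sums are $O(1)$ per divisor pair $(g_a,g_b)$, and the truncation regime only involves $\tilde e$ in a range $(N/G,N]$ with $G\le a_\ell b_\ell$, contributing $\ll\log(2a_\ell b_\ell)$; you should bound these weighted sums rather than the bare one. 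Second, your second Mertens asymptotic has a genuine secondary term: $\sum_{m\le N,\,d\mid m}b(m)m^{-1}\log m=-\frac{\mu(d)}{\varphi(d)}+\frac{2\mu(d)\log d}{\varphi(d)\log N}+\mathrm{Err}$. The $\log d$ piece survives your M\"obius cancellation when exactly one of $a_\ell,b_\ell$ equals $1$ (producing terms of size $\asymp\log(a_\ell b_\ell)/\log N$, e.g.\ $\Lambda(\mathrm{rad}(b_\ell))/\log N$ when $a_\ell=1$), and is admissible only because $\log(a_\ell b_\ell)\ll(a_\ell b_\ell)^{\varepsilon}$; this is the one place where the factor $(a_\ell b_\ell)^{\varepsilon}$ is needed beyond divisor-function counts, so it must be tracked explicitly rather than absorbed into ``Err''.
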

\begin{proof}
 For notational ease, let $N =T^\theta$.  We first express the conditions in the sum above in terms of Mellin transforms.  To be specific
since
$$
W(x) = \frac{1}{2\pi} \int_{(\varepsilon)} x^{-w} G(w) \frac{dw}{w}
$$
with $G(w)$ rapidly decaying along vertical lines, and such that
$G(w) = G(-w)$, $G(0) = 1$,  we have
\begin{align*}
 & S =
\frac{1}{2\pi i} \int_{(2)} \sum_{m, n \geq 1} \sum_{h, k \leq N} b(h)b(k)\sum_{\substack{r \geq 1\\ nk = b_{\ell} r \\ mh = a_{\ell} r}}  \frac{1}{r} \bfrac{\alpha t + \beta}{2\pi mn}^w G(w) \frac{dw}{w} \\ 
& = \bfrac{1}{2\pi i}^3 \int_{(2)}\int_{(2)} \int_{(2)} \sum_{m, n \geq 1} \frac{1}{(mn)^w} \sum_{h, k} \frac{\mu(h)\mu(k)}{h^{z_1}k^{z_2}} 
\sum_{\substack{r \geq 1\\nk = b_{\ell} r\\mh = a_{\ell} r}} \frac{1}{r} \bigg ( \frac{\alpha t +\beta}{2\pi} \bigg )^w G(w) \frac{dw}{w} \frac{N^{z_1} dz_1}{\log N z_1^2}\frac{N^{z_2} dz_2}{\log N z_2^2}.
\end{align*}
The sum over $m, n, h, k$ and $r$ inside the integral may be factored into an Euler product as
\begin{align*}
 &\sum_{r\geq 1} \frac{1}{r} \left( \sum_{nk = b_{\ell} r} \frac{1}{n^w} \frac{\mu(k)}{k^{z_2}} \right)
\left( \sum_{mh = a_{\ell} r} \frac{1}{m^w} \frac{\mu(h)}{h^{z_1}} \right)\\
 & = \prod_{p} \left( 1+\frac{1}{p} \left(\frac{1}{p^w} - \frac{1}{p^{z_2}}\right)
\left(\frac{1}{p^w} - \frac{1}{p^{z_1}} \right) \right) F(a_{\ell}b_{\ell}, w, z_1, z_2) \eta(w, z_1, z_2).
\end{align*}Here $\eta(w, z_1, z_2)$ is an Euler product which is absolutely convergent in the region delimited by $\tRe w, \tRe z_1, \tRe z_2 > -1/2$ and we define 
\begin{align*}
F (a_{\ell}b_{\ell}, w, z_1, z_2) &= \prod_{p^j || a_{\ell}} \frac{1}{p^{(j - 1) w}}  \left(
   \frac{1}{p^w} - \frac{1}{p^{z_1}} \right)\left( 1 + \frac{1}{p^{1 + w}} \left( \frac{1}{p^w} - \frac{1}{p^{z_2}} \right) \right) \\
   &\prod_{p^j || b_{\ell}} \frac{1}{p^{(j - 1) w}}  \left(
   \frac{1}{p^w} - \frac{1}{p^{z_2}} \right)
   \left( 1 + \frac{1}{p^{1 + w}} \left( \frac{1}{p^w} - \frac{1}{p^{z_1}} \right) \right)\\
   &\prod_{p \nmid  a_{\ell}b_{\ell}}\left( 1+\frac{1}{p} \left(\frac{1}{p^w} - \frac{1}{p^{z_2}}\right)
\left(\frac{1}{p^w} - \frac{1}{p^{z_1}} \right) \right)^{-1}.
\end{align*}
Further, we may write

$$\prod_{p} \left( 1 + \frac{1}{p}  \left( \frac{1}{p^w} -
\frac{1}{p^{z_2}} \right)  \left( \frac{1}{p^w} - \frac{1}{p^{z_1}} \right)
\right) \eta ( w, z_1, z_2) = \frac{\zeta ( 1 + 2 w) \zeta ( 1 + z_1 +
z_2)}{\zeta ( 1 + w + z_1) \zeta ( 1 + w + z_2)} \tilde{\eta} ( w, z_1, z_2),$$
where $\tilde{\eta}$ denotes an Euler product which is absolutely convergent
in the region delimited by $\tRe w, \tRe z_1, \tRe z_2 > -
1 / 2$ and does not depend on $a_{\ell}$ or $b_{\ell}$. Thus,
\begin{multline*}
  S = \left( \frac{1}{2 \pi i} \right)^3 \left( \int_{( 2)} \right)^3
  \frac{\zeta ( 1 + 2 w) \zeta ( 1 + z_1 + z_2)}{\zeta ( 1 + w + z_!) \zeta (
  1 + w + z_2)} \tilde{\eta} ( w, z_1, z_2) 
  F (a_{\ell}b_{\ell}, w, z_1, z_2) \left( \frac{\alpha t + \beta}{2 \pi} \right)^w 
  \\ G ( w)
  \frac{dw}{w} \frac{N^{z_1} dz_1}{\log N z_{1^{}}^2}
  \frac{N^{z_2} dz_2}{\log N z_{2^{}}^2} 
\end{multline*}
and shifting contours to $\tRe w = - \delta$, $\tRe z_1 =
\tRe z_2 = \delta+\delta^2$ gives, since $\alpha t + \beta \asymp T$, $$
S = I_1 + I_2 + I_3 + O\bfrac{(a_{\ell}b_{\ell})^{\delta} N^{2\delta+2\delta^2}}{T^{\delta}}
$$
with $I_1, I_2, I_3$ specified below.
Since $N < T^{1/2 - \varepsilon}$ the error term is $\ll (a_{\ell}b_{\ell})^{\varepsilon}
T^{-\varepsilon}$ provided that $\delta$ is chosen small enough.
Writing $$
H(z_1, z_2) = \frac{\zeta ( 1 + z_1 + z_2)}{\zeta ( 1 + z_1) \zeta (
  1 + z_2)} \tilde{\eta} ( 0, z_1, z_2) F (a_{\ell}b_{\ell}, 0, z_1, z_2)
$$
we have 
$$
I_1 = \frac{\log (\alpha t + \beta)}{2} \frac{1}{(2 \pi i)^2} \int_{(1/4)} \int_{(1/4)} H(z_1, z_2) \cdot
\frac{N^{z_1} dz_1}{\log N z_{1}^2}
  \frac{N^{z_2} dz_2}{\log N z_{2}^2}, 
$$
\begin{align*}
I_2
= -\frac 12 \frac{1}{(2 \pi i)^2}  \int_{(1/4)} \int_{(1/4)}
  \bigg ( \frac{\zeta'}{\zeta}(1+z_1) + \frac{\zeta'}{\zeta}(1 + z_2) \bigg )
  \cdot H(z_1, z_2) \cdot \frac{N^{z_1} dz_1}{\log N z_{1^{}}^2}
  \frac{N^{z_2} dz_2}{\log N z_{2^{}}^2},
\end{align*}
and
\begin{multline*}
I_3
= \frac 12  \frac{1}{(2 \pi i)^2} \int_{(1/4)} \int_{(1/4)}
  \frac{\left(\frac{d}{dw}\tilde{\eta} ( w, z_1, z_2) F (a_{\ell}b_{\ell}, w, z_1, z_2)\right)_{w=0}}
  {\tilde{\eta} ( 0, z_1, z_2) F (a_{\ell}b_{\ell}, 0, z_1, z_2)}
  \cdot H(z_1,z_2) \cdot \\
\cdot \frac{N^{z_1} dz_1}{\log N z_{1^{}}^2}
  \frac{N^{z_2} dz_2}{\log N z_{2^{}}^2}.
\end{multline*}
Bounding the integrals is now a standard exercise.  As they can be bounded using the exact same procedure, we will focus our attention to $I_1$ (note in particular, that $I_3$ is smaller by a factor of $\log T$ compared with the other
integrals).

For ease of notation, write $G(z_1, z_2) = \tilde{\eta} ( 0, z_1, z_2) F (a_{\ell}b_{\ell}, 0, z_1, z_2)$.  Then 
\begin{multline*}
I_1
= \frac{\log (\alpha t + \beta)}{2} \sum_{n \leq N}\frac 1n \frac{1}{(2 \pi i)^2} \int_{(1/\log N)} \int_{(1/\log N)}\zeta ( 1 + z_1)^{-1} \zeta (
  1 + z_2)^{-1} 
  G(z_1, z_2) \cdot \\ \cdot \bfrac{N}{n}^{z_1+z_2}
  \frac{ dz_1}{\log N z_{1}^2}
  \frac{ dz_2}{\log N z_{2}^2}, 
\end{multline*}
Let $M = \exp (B(\log \log T)^2)$ for $B$ a parameter to be determined shortly.  We split the sum in $n$ above to $n\leq N/M$ and $n>N/M$.  

If $n>N/M$, then shift both contours to the line with real-part
$(\log M)^{-1}$ and bound the integrals trivially. The contribution of terms 
with $n > N / M$ is
$$\ll \log T (\log M)^5 (\log N)^{-2} (a_{\ell}b_{\ell})^{\epsilon} \ll \frac{(a_{\ell}b_{\ell})^\epsilon}{(\log T)^{1-\epsilon}}.$$

Now, for the terms with $n\leq N/M$, first truncate both contours at 
height $\log^4 T$ with an error $\ll (a_{\ell}b_{\ell})^\epsilon \cdot (\log T)^{-1}$.  Since $a_{\ell}b_{\ell}>1$, we assume without loss of generality that $a_{\ell}>1$.  This in turn implies that $F(a_{\ell}b_{\ell}, 0, 0, z_2) = 0$, so that the integrand is holomorphic at $z_1=0$.  From the classical zero free region for $\zeta(s)$, there exists a constant $c>0$ such that $(\zeta(1+z_1))^{-1} < \log(|z_1|+1)$ for $\tRe z_1 \geq 
-c (\log \log T)^{-1}$ and $|\tIm z_1| \leq \log^4 T$.  We now shift the integral in $z_1$ to $\tRe z_1 = -c (\log \log T)^{-1}$ with an error 
$\ll (a_{\ell}b_{\ell})^{\varepsilon} (\log T)^{-1}$ and bound the remaining integral trivially by
$$M^{\frac{-c}{\log \log T}} \log T \cdot (\log \log T)^2(a_{\ell}b_{\ell})^\epsilon  \ll \exp(-cB\log \log T) (\log T)^{1+\epsilon}(a_{\ell}b_{\ell})^\epsilon.
$$
The result follows upon picking $B = \frac{2}{c}$.
\end{proof}

\begin{proof}[Proof of Theorem \ref{thm:mollifiedmirage}]
Let $B(s) = M_{\theta}(s)$ with $0 < \theta < \tfrac 12$. 
Inserting the bound in Lemma \ref{lemma:saving} into 
Proposition \ref{prop:aver} we obtain
$$
\mathcal{E} \ll \frac{T}{(\log T)^{1 - \varepsilon}}
\cdot \sum_{\ell > 0} \frac{1}{(a_{\ell}b_{\ell})^{1/2 - \varepsilon}}  + O(T^{1 - \varepsilon}).
$$
The sum over $\ell > 0$ is rapidly convergent: Because of 
(\ref{condition4}) we have
$a_{\ell} \asymp b_{\ell} e^{2\pi \ell / \alpha}$ and therefore $a_{\ell}b_{\ell} \gg e^{2\pi \ell / \alpha}$.
It follows that the sum over $\ell > 0$ contributes $O(1)$ and we obtain
$
\mathcal{E} \ll T (\log T)^{-1 + \varepsilon}
$
as desired.
\end{proof}

\section{Proof of Theorem \ref{thm:nonvanishing}}
Recall that 
$$
M_{\theta}(s) := \sum_{n \geq 1} \frac{b(n)}{n^s}
$$
with coefficients 
$$
b(n) := \mu(n) \cdot \bigg ( 1 - \frac{\log n}{\log T^{\theta}}  \bigg ),
$$for $n\leq T^{\theta}$ and $b(n) = 0$ otherwise.
Define the mollified first moment as
\begin{equation}\label{eqn:I}
 \mathcal{I} := \sum_{\ell} \zeta(\tfrac 12 + i(\alpha \ell + \beta))
 M_{\theta} (\tfrac 12 + i(\alpha\ell+\beta))
 \phi
 \bfrac{\ell}{T},
\end{equation}and recall that
\begin{equation*}
\mathcal{J} := \sum_{\ell} |\zeta(\tfrac 12 + i(\alpha \ell + \beta))
 M_{\theta} (\tfrac 12+ i(\alpha\ell+\beta))|^2 \phi
\bfrac{\ell}{T}. 
\end{equation*}

By Cauchy-Schwarz and $0 \leq \phi \leq 1$, we have
$$
|\mathcal{I}| \leq (P_{\alpha,\beta}(T) \cdot T)^{1/2} \cdot \mathcal{J}^{1/2}.
$$
Then our Theorem \ref{thm:nonvanishing} follows from the following Proposition \ref{prop:I} and Theorem \ref{thm:mollifiedmirage}.
\begin{prop}\label{prop:I}
Let $\alpha > 0$, $\beta$ be real numbers.
With $\mathcal{I}$ as defined in (\ref{eqn:I}), and for $T$ large,
$$ |\mathcal{I}| = T\hat{\phi}(0) + O\bfrac{T}{\log T}.$$
\end{prop}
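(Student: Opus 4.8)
The plan is to reduce the discrete first moment $\mathcal{I}$ to a continuous average via the same mechanism as in Proposition \ref{prop:aver}, and then to recognize the main term as coming entirely from the diagonal $n=1$ term of the mollifier. First I would apply Poisson summation (or the approximate functional equation together with summation over $\ell$, as in the proof of Proposition \ref{prop:aver}) to write
$$
\mathcal{I} = \int_{\mathbbm{R}} \zeta(\tfrac 12 + i(\alpha t + \beta)) M_\theta(\tfrac 12 + i(\alpha t + \beta)) \phi\bfrac{t}{T} dt + \mathcal{E}_1,
$$
where $\mathcal{E}_1$ is a sum over nonzero frequencies, controlled by the Diophantine input exactly as in the proof of Theorems \ref{mirage2} and \ref{thm:mirage}: the off-diagonal terms force a rational approximation $a_\ell/b_\ell$ to $e^{2\pi\ell/\alpha}$ as in (\ref{condition4}), and since $a_\ell b_\ell \gg e^{2\pi\ell/\alpha}$ the resulting sum over $\ell$ is negligible (of size $O(T/\log T)$, or much smaller). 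Here the first moment is easier than the second moment because only a single Dirichlet polynomial $M_\theta$ appears, so the analogue of $F(a_\ell,b_\ell,t)$ is a one-variable divisor-type sum rather than the correlation sum in (\ref{eqn:F}).

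Next I would evaluate the continuous integral. Expanding $M_\theta(\tfrac 12 + i(\alpha t + \beta)) = \sum_{n \le T^\theta} b(n) n^{-1/2 - i(\alpha t + \beta)}$ and using the standard fact that, after a change of variables $u = \alpha t + \beta$,
$$
\int \zeta(\tfrac 12 + iu)\, n^{-1/2-iu}\, \psi(u)\, du
$$
picks out, via the approximate functional equation for $\zeta$, a main term only from the term $m/n$ of the Dirichlet series of $\zeta$ with $m = n$ — more precisely the off-diagonal contributions are $O(T^{1/2+\varepsilon})$ and the diagonal survives only when... well, when the "$1$" in the Dirichlet series of $\zeta$ pairs with the $n$-th term of $M_\theta$, giving a contribution proportional to $b(1)/1 = 1$ times $\int \phi(t/T)\,dt = T\hat\phi(0)$, and all terms with $n>1$ contribute lower-order (this is where $\mu(n)$ and the smoothing $1 - \log n/\log T^\theta$ cause cancellation, but in fact for the first moment the clean statement is just that the main term is the $n=1$ term). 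Thus the continuous integral equals $b(1)\, T\,\hat\phi(0) + O(T/\log T) = T\hat\phi(0) + O(T/\log T)$.

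Assembling the two pieces gives $\mathcal{I} = T\hat\phi(0) + O(T/\log T)$, and since this is real and positive for large $T$, $|\mathcal{I}|$ satisfies the same estimate. I expect the main obstacle to be the bookkeeping in the continuous integral: one must carry out the approximate functional equation for $\zeta(\tfrac 12 + iu)$, swap the $n$-sum with the $u$-integral, and check that every term other than the diagonal $n=1$ term is $O(T/\log T)$ — in particular that the sum $\sum_{1 < n \le T^\theta} b(n) \cdot (\text{main contribution of the }n\text{-th term})$ telescopes or cancels down to this size. This is routine given the rapid decay of $\phi$ and the classical mean-value estimates for $\zeta$, but it requires care with the ranges of summation and the error terms, exactly paralleling (but being simpler than) the analysis already carried out in Lemma \ref{lemma:saving} for the second moment.
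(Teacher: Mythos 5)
Your overall architecture (Poisson summation in $\ell$, main term from the zero frequency and the diagonal $mn=1$) matches the paper's, and your treatment of the continuous/zero-frequency part is fine. But there is a genuine gap in your treatment of the nonzero frequencies, and it is precisely the heart of the proposition. You claim that $\mathcal{E}_1$ is negligible ``exactly as in the proof of Theorems \ref{mirage2} and \ref{thm:mirage}'' because $a_\ell b_\ell \gg e^{2\pi\ell/\alpha}$. This cannot work: Theorem \ref{thm:mirage} itself shows that when $e^{2\pi\ell/\alpha}$ is rational the nonzero frequencies are \emph{not} negligible --- they produce the constant $\delta(\alpha,\beta)$. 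The same phenomenon occurs here. Take $\alpha = 2\pi/\log 2$; then at frequency $\ell=1$ the term $mn = n_0 = 2$ survives Poisson summation with no decay from $\hat\phi$, and contributes $T\hat\phi(0)\, 2^{-1/2-i\beta}\sum_{m\mid 2} b(m)$. The factor $n_0^{-1/2}$ is a constant, so the Diophantine input gives nothing, and for a generic Dirichlet polynomial this term is $\asymp T$, destroying the asymptotic.

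What saves the day --- and what the paper's proof is built around --- is the specific structure of the mollifier coefficients: since $\sum_{m\mid n_0}\mu(m) = 0$ for $n_0>1$ and $b(m) = \mu(m) + O(\log m/\log T)$, one has
$$
\sum_{m \mid n_0} b(m) \ll \frac{d(n_0)\log n_0}{\log T},
$$
and it is this $1/\log T$ (not rapid decay in $\ell$ or largeness of $n_0$) that makes each surviving off-diagonal term $O(T\, d(n_0)\log n_0 /(\sqrt{n_0}\log T))$; summing over $\ell$ using $n_0 \asymp e^{2\pi\ell/\alpha}$ then gives the stated $O(T/\log T)$, which is sharp in general, not ``much smaller.'' You do mention M\"obius cancellation, but you place it in the continuous integral (where it is not needed --- there only $mn=1$ survives) and explicitly wave it away for the discrete error term. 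One must also handle the integers $n_0$ satisfying $\alpha = 2\pi\ell/\log n_0 + O(T^{\varepsilon-1})$ only approximately (there are $\ll n_0 T^{-1+\varepsilon}+1$ of them per $\ell$), as the paper does; that part is routine, but the divisor-sum cancellation is not optional.
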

\begin{proof}
Uniformly in $0 \leq t \leq 2aT$ we have,
$$\zeta(\tfrac 12 + it) =  \sum_{n \leq 2aT} \frac{1}{n^{1/2 + it}} + O\bfrac{1}{T^{1/2}},
$$
Since in addition $|M(\tfrac 12 + it)| \ll T^{\theta/2+ \epsilon}$ for all $t$, 
we get
\begin{align*}
\mathcal{I}
&=  \sum_{\ell} \sum_{n \leq 2\alpha T} \frac{1}{n^{1/2 + i(\alpha \ell + \beta)}}
\cdot M (\tfrac 12 + i(\alpha\ell+\beta))
\phi\bfrac{\ell}{T} + O\big(T^{\theta/2 + 1/2 + \epsilon}\big)\\ 
&=  \sum_{m\leq T^{\theta}}\frac{b(m)}{\sqrt{m}} \sum_{n\leq 2\alpha T} \frac{1}{\sqrt{n}}
\cdot (mn)^{-ib} \sum_{\ell} (mn)^{-ia\ell} \phi\bfrac{\ell}{T} + O(T^{3/4})\\
&=  \sum_{m\leq T^{\theta}}\frac{b(m)}{\sqrt{m}} \sum_{n\leq 2\alpha T} \frac{1}{\sqrt{n}}
\cdot (mn)^{-ib} \sum_{\ell} T \hat{\phi}\left( T \left(\frac{\alpha \log (mn)}{2\pi} - \ell \right) \right) + O(T^{3/4}),
\end{align*}
by Poisson summation applied to the sum over $\ell$.

Note that $\hat{\phi}\left( Tc \right) \ll_A T^{-A}$ for any $|c|>T^{-1+\epsilon}$,which is an immediate result of $\hat{\phi}$ being a member of the Schwarz class. 
Hence, the sum above may be restricted to $|\ell| 
\leq \tfrac {\alpha}{2\pi} \cdot \log(2\alpha T^{1+\theta})+ O(T^{\epsilon - 1})$.  The terms with $\ell=0$ contributes a main term of $T \hat{\phi}(0)$ when $mn=1$, and the terms with other values of $mn$ contributes $O(T^{-A})$.  

Now consider $\ell \neq 0$.  Terms with $|a\log (mn) - \ell| > T^{\epsilon-1}$ contribute $O(T^{-A})$.  Otherwise, suppose that
\begin{equation}\label{eqn:prop1a}
\alpha = \frac {2\pi \ell}{\log n_0} + O(T^{\epsilon - 1}) 
\end{equation}
for some integer $n_0>1$, and fix such a $n_0$  The term $mn = n_0$ contributes
$$
\frac{T}{n_0^{1/2+ib}} \sum_{m|n_0} b(m)\hat{\phi}\left( T (\alpha \log (n_0) - \ell) \right) $$
for $T$ large. This term is bounded by
$$
\ll_{a} \frac{T}{\log T} \cdot \frac{d(n_0) \log n_0}{\sqrt{n_0}}
$$
because $b(m) = \mu(m) + O(\log m / \log T)$ for all $m$, and thus,
$$
\sum_{m | n_0}  b(m) \ll \frac{d(n_0) \log n_0}{\log T}
$$
For a fixed $\ell$, the number of $n_0$ satifying 
(\ref{eqn:prop1a}) is bounded by ${n_0} T^{-1 + \varepsilon} + 1$.  
Thus the total contribution of all the terms is
$$\ll T \frac{n_0^{1/2+\varepsilon}}{T} \cdot T^{\varepsilon} 
+ T \frac{d(n_0)}{\sqrt{n_0}} \frac{\log n_0}{\log T}
\ll T^{3/4+\varepsilon} + T \frac{d(n_0)}{\sqrt{n_0}} \frac{\log n_0}{\log T}. 
$$
We sum this over all the $|\ell| \ll \log (2\alpha T^{1 + \theta})$. 
Such a short sum does not affect the size of the first term above.
As for the second term, since $n_0 \asymp e^{2\pi a \ell}$, 
the sum over $\ell \neq 0$ is bounded by
$$\frac{T}{\log T} \sum_{|\ell| > 0} \frac{|\ell|}{e^{|a \pi \ell|(1-\epsilon)}} \ll 
\frac{T}{\log T}. 
$$
From this we have that
\begin{align*}
I
&=  \hat{\phi}(0)T +  O\bfrac{T}{\log T}
\end{align*}

\end{proof}

\begin{proof}[Proof of Theorem 1]
Appealing to a result of
Balasubramanian, Conrey and Heath-Brown \cite{BalasubramanianConreyHeathBrown} to compute
$\int_{T}^{2T} |(\zeta \cdot M_{\theta}) (\tfrac 12 + i(\alpha t + \beta))|^2 dt$, 
we have by Theorem \ref{thm:mollifiedmirage}
$\mathcal{J} \leq T \cdot (1 + \tfrac {1}{\theta} + o(1))$. Combining this with the inequality
$$
\mathcal{I} \leq (P_{\alpha,\beta}(T) \cdot T)^{1/2} \cdot \mathcal{J}^{1/2}
$$
and Proposition \ref{prop:I}, we obtain
$$
\hat{\phi}(0) T (1 + o(1)) \leq (P_{\alpha,\beta}(T) \cdot T)^{1/2} \cdot (T \cdot ( \tfrac{1}{\theta} + 1 + o(1)) )^{1/2}
$$
Hence,
$$
P_{\alpha,\beta}(T) \geq \frac{\theta}{\theta + 1} \hat{\phi}(0) + o(1)
$$
for all $0 < \theta < \tfrac 12$. Now we set $\phi(t) = 1$ for $t \in [1+\epsilon, 2-\epsilon]$ so that $\hat{\phi}(0) \geq 1-2\epsilon$.  Letting $\theta \rightarrow {\tfrac {1}{2}}^{-}$ and $\epsilon \rightarrow 0$,
we obtain the claim.
\end{proof}

In order to prove the Corollary we need the lemma below.

\begin{lemma}
We have,
$$
\sum_{\ell} |M_{\theta}(\tfrac 12 + i(\alpha \ell + \beta))|^2 \phi \big ( \frac{\ell}{T} \big )
 \ll T \log T
$$
\end{lemma}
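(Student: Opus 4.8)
The plan is to read the sum as a discrete second moment of the Dirichlet polynomial $M_{\theta}(s)$ and to apply Proposition 2, the analogue of Proposition \ref{prop:aver} in which $\zeta \cdot B$ is replaced by $B$ alone, with $B(s) = M_{\theta}(s)$; this is legitimate since $\theta < \tfrac12 < 1$ and the coefficients satisfy $|b(n)| \le 1$. Writing $\mathcal{E}'$ for the difference between $\sum_{\ell} |M_{\theta}(\tfrac12 + i(\alpha\ell+\beta))|^2 \phi(\ell/T)$ and $\int_{\mathbbm{R}} |M_{\theta}(\tfrac12 + i(\alpha t+\beta))|^2 \phi(t/T)\,dt$, it is enough to show that both the continuous integral and $\mathcal{E}'$ are $\ll T\log T$.

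For the continuous integral I would substitute $u = \alpha t + \beta$, turning it into $\tfrac1\alpha \int |M_{\theta}(\tfrac12 + iu)|^2 \phi\big(\tfrac{u-\beta}{\alpha T}\big)\,du$, a smoothly weighted mean square of $M_{\theta}$ over an interval of length $\asymp_{\alpha} T$. Since $M_{\theta}(s) = \sum_{n \le T^{\theta}} b(n)n^{-s}$ with $|b(n)| \le 1$ and $T^{\theta} \le T^{1/2}$, the classical mean value theorem for Dirichlet polynomials (Montgomery--Vaughan) gives a bound $\ll T\sum_{n \le T^{\theta}} \tfrac{|b(n)|^2}{n} + \sum_{n\le T^{\theta}}|b(n)|^2 \ll T\log T$, of exactly the required size.

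For $\mathcal{E}'$ I would invoke Proposition 2, which expresses it, up to an error $O(T^{1-\varepsilon})$, through a sum over $\ell > 0$ of terms of shape $(a_{\ell}b_{\ell})^{-1/2}\, \hat{\phi}(\cdots)\, F'(a_{\ell},b_{\ell})$ with $F'(a_{\ell},b_{\ell}) = \sum_{r\ge1} b(a_{\ell}r)b(b_{\ell}r)/r$. Each factor is bounded crudely: $\hat{\phi}$ is bounded since it lies in the Schwartz class; $|F'(a_{\ell},b_{\ell})| \ll \sum_{r\le T^{\theta}} 1/r \ll \log T$, because $|b(n)|\le1$ and $b(a_{\ell}r)b(b_{\ell}r)$ vanishes unless $r\le T^{\theta}$; and $\sum_{\ell>0}(a_{\ell}b_{\ell})^{-1/2} \ll_{\alpha} 1$, since the constraint (\ref{condition4}) in the definition of $(a_{\ell},b_{\ell})$ forces $a_{\ell}b_{\ell} \gg e^{2\pi\ell/\alpha}$. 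Hence $\mathcal{E}' \ll T\log T$ (in fact considerably smaller), and adding the two contributions gives the lemma.

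The only point requiring genuine care --- and it is still routine --- is the mean square estimate for the continuous integral: one must quote the mean value theorem for Dirichlet polynomials in the right form and check that the mollifier length $T^{\theta}$ is comfortably below the length of the averaging window, so that the diagonal contribution $\asymp T\log T$ dominates. The handling of $\mathcal{E}'$ is soft once Proposition 2 is available, since we only need an upper bound of the same order of magnitude as the main term and may therefore estimate every piece trivially.
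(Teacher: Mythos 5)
Your proof is correct and follows essentially the same route as the paper: both apply Proposition 2 with $B = M_{\theta}$, bound $F'(a_{\ell},b_{\ell}) \ll \log T$ trivially from $|b(n)| \le 1$, use $a_{\ell}b_{\ell} \gg e^{2\pi\ell/\alpha}$ to make the sum over $\ell$ converge, and absorb everything into the continuous mean square, which is $\ll T\log T$. The only difference is that you spell out the Montgomery--Vaughan mean value theorem for the continuous integral, which the paper simply asserts.
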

\begin{proof}
Using Proposition 2 we find that the above second moment is equal to
$$
\int_{\mathbbm{R}} |M_{\theta}(\tfrac 12 + i(\alpha t + \beta))|^2 \phi \big ( \frac{t}{T} \big )
dt + O \bigg ( T \hat{\phi}(0) \sum_{\ell > 0} \frac{1}{\sqrt{a_{\ell}b_{\ell}}}
\cdot |F'(a_{\ell},b_{\ell})| \bigg )
$$
where $a_{\ell}, b_{\ell}$ denotes for each $\ell > 0$ the unique (if it exists!) couple
of co-prime integers such that $a_{\ell} b_{\ell} > 1$, $b_{\ell} < T^{1 / 2 - \varepsilon}
e^{-\pi \ell / \alpha}$ and
$$
\bigg | \frac{a_{\ell}}{b_{\ell}} - e^{2\pi \ell / \alpha} \bigg | \leq \frac{e^{2\pi \ell / \alpha}}{T^{1 - \varepsilon}}
$$
and where
$$
F'(a_{\ell},b_{\ell}) = \sum_{r \leq T} \frac{b(a_{\ell} r)b(b_{\ell} r)}{r} \ll \log T
$$
since the coefficients of $M_{\theta}$ are bounded by $1$ in absolute value. 
Since $\int_{\mathbbm{R}} |M_{\theta}(\tfrac 12 + i(\alpha t + \beta))|^2 \phi(t / T) dt \ll T \log T$
the claim follows.
\end{proof}
\begin{proof}[Proof of the Corollary]
Following \cite{IwaniecSarnak} let $\mathcal{H}_0$ be the set of
integers $T \leq \ell \leq 2T$ at which,
$$
|\zeta(\tfrac 12 + i(\alpha \ell + \beta))| \leq \varepsilon (\log \ell)^{-1/2}
$$
and let $\mathcal{H}_1$ be the set of integers $\ell$ at which the
reverse inequality holds. Notice that,
\begin{multline*}
\mathcal{C}_0 := \bigg | \sum_{\ell \in \mathcal{H}_0} \zeta(\tfrac 12 + i(\alpha \ell + \beta))
M_{\theta}(\tfrac 12 + i(\alpha \ell + \beta)) \phi \big ( \frac{\ell}{T} \big ) \bigg | \\
\leq \varepsilon (\log T)^{-1/2}T^{1/2} \cdot \bigg ( \sum_{\ell} |M_{\theta}(\tfrac 12 +
i(\alpha \ell + \beta))|^2 \phi \big ( \frac{\ell}{T} \big ) \bigg )^{1/2}
\leq C \varepsilon T \hat{\phi}(0)
\end{multline*}
for some absolute constant $C > 0$. Hence by Proposition \ref{prop:I} and the Triangle Inequality,
$$
\mathcal{C}_1 := \bigg | \sum_{\ell \in \mathcal{H}_1} \zeta(\tfrac 12 + i(\alpha \ell + \beta)) M_{\theta}(\tfrac 12 + i(\alpha \ell + \beta)) \phi \big ( \frac{\ell}{T} \big ) \bigg |
\geq (1 - C\varepsilon) \hat{\phi}(0) T
$$
while by Cauchy's inequality,
$$
\mathcal{C}_1 \leq \bigg ( \text{Card}(\mathcal{H}_1 ) \bigg )^{1/2}
\cdot \bigg ( \sum_{\ell} |\zeta(\tfrac 12 + i(\alpha \ell + \beta))M_{\theta}(\tfrac 12 + i(\alpha \ell + \beta))|^2 \phi \big ( \frac{\ell}{T} \big ) \bigg )^{1/2}
$$
As in the proof of Theorem 1, by Theorem 5 and a result of Balasubramanian, Conrey and Heath-Brown, the mollified second moment is $\leq T \cdot ( 1 + 1/\theta + o(1))$ as $T \rightarrow \infty$. Thus
$$
|\mathcal{H}_1| \geq \hat{\phi}(0) \frac{1 - C\varepsilon}{1 + 1/\theta} T
$$
Taking $\theta \rightarrow {\tfrac 12}^{-}$ and letting $\phi(t) = 1$
on $t \in [1 + \varepsilon; 2 - \varepsilon]$, so that $\hat{\phi}(0) \geq 1 - 2 \varepsilon$ we obtain the claim on taking $\varepsilon \rightarrow 0$. 
\end{proof}

\section{Large and small values: Proof of Theorem 5}

Let $0 \leq \phi \leq 1$ be a smooth function, compactly supported in $[1,2]$.
Let
$$
A(s) = \sum_{n \leq T} \frac{1}{n^s}
$$
and let
$$
B(s) = \sum_{n \leq N} b(n) n^{-s}
$$
be an arbitrary Dirichlet polynomial of length $N$.
Consider, 
$$
\mathcal{R} := \frac{\sum_{\ell} A(\tfrac 12 + i(\alpha \ell + \beta))|B(\tfrac 12 + i(\alpha \ell + \beta))|^2 
\phi\big ( \frac{\ell}{T} \big )} {\sum_{\ell} |B(\tfrac 12 + i(\alpha\ell + \beta))|^2 \phi \big ( \frac{\ell}{T} \big )}.
$$
Following Soundararajan \cite{Soundararajan}, and since $\zeta(\tfrac 12 + it) = A(\tfrac 12 + it) + O(t^{-1/2})$,
$$
\max_{T \leq \ell \leq 2T} |\zeta(\tfrac 12 + i(\alpha \ell + \beta))| + O(T^{-1/2})
\geq |\mathcal{R}|
\geq 
\min_{T \leq \ell \leq 2T} |\zeta(\tfrac 12 + i(\alpha \ell + \beta))| + O(T^{-1/2})
$$
Thus, to produce large and small values of $\zeta$ at discrete points $\tfrac 12 + i(a\ell+b)$
it suffices to choose a Dirichlet polynomial $B$ that respectively maximizes/minimizes the
ratio $\mathcal{R}$.
Fix $\varepsilon > 0$. 
Consider the set $S_1$ of tuples $(a_{\ell},b_{\ell})$, with $\ell \leq 2 \log T$,
such that
$$
\bigg | \frac{\alpha \log \frac{a_{\ell}}{b_{\ell}}}{2\pi} - \ell \bigg | \leq 
\frac{1}{T^{1 - \varepsilon}}
$$
and $a_{\ell}b_{\ell} > 1$ and both $a_{\ell},b_{\ell}$ are less than $T^{1/2 - \varepsilon}$.
In particular for each $\ell$ there is at most one such tuple so $|S_1| \leq 2 \log T$.
From each tuple in $S_1$ we pick one prime divisor of $a_{\ell}$ and one prime
divisor of $b_{\ell}$ and put them into a set we call $S$. 

We define our
resonator coefficients $r(n)$ by setting $L = \sqrt{\log N\log\log N}$ and
$$
r(p) = \frac{L}{\sqrt{p}\log p}
$$
when $p \in ([L^2; \exp((\log L)^2)]$ and $p \not \in S$. 
In the remaining cases we let $r(p) = 0$. Note in particular that
the resonator coefficients change with $T$. 

We then choose $b(n) = \sqrt{n}r(n)$ or $b(n) = \mu(n)\sqrt{n} r(n)$
depending on whether we want to maximize or minimize the ratio $\mathcal{R}$. 
For either choice of coefficients we have the following lemma.
\begin{lemma}
Write $D(s) = \sum_{n \leq T} \frac{a(n)}{n^s}$ with the coefficients $a(n) \ll 1$.
If $N = T^{1/2 - \delta}$ with $\delta > 10\varepsilon$, then,
\begin{multline*}
\sum_{\ell} D(\tfrac 12 + i(\alpha \ell + \beta)) |B(\tfrac 12 + i(\alpha \ell + \beta))|^2 \phi \big ( \frac{\ell}{T} \big )
= \int_{\mathbbm{R}} D(\tfrac 12 + i(\alpha t + \beta)) |B(\tfrac 12 + i(\alpha t + \beta))|^2 \phi \big ( \frac{t}{T}
\big ) dt \\ + O(T^{1 + (1 - 3\delta)/2+4\epsilon})
\end{multline*}
\end{lemma}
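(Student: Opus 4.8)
The plan is to follow the same strategy used in the proof of Proposition~\ref{prop:aver}, but now with the extra Dirichlet polynomial $D(s)$ of length $T$ replacing the factor $\zeta(\tfrac 12 + i(\alpha\ell+\beta))$. First I would open up $D(\tfrac12 + i(\alpha\ell+\beta))|B(\tfrac12+i(\alpha\ell+\beta))|^2$ as a Dirichlet series: writing $|B(s)|^2 = \sum_{h,k\le N} b(h)\overline{b(k)} (h/k)^{-(s-\tfrac12)}\cdot(hk)^{-1/2}$ and multiplying by $D(s) = \sum_{m\le T} a(m) m^{-s}$, the summand becomes a linear combination of terms $c(m,h,k)\cdot (mh/k)^{-i(\alpha\ell+\beta)}$ with $c(m,h,k)\ll (mhk)^{-1/2}$ and $mh \le TN$, $k \le N$. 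Then I would apply Poisson summation (equivalently, the truncated Mellin/$\hat\phi$ expansion used throughout) to the sum over $\ell$. The diagonal frequency $mh=k$ produces exactly the continuous integral $\int_{\mathbbm R} D|B|^2\phi(t/T)\,dt$; the off-diagonal frequencies survive only when $\big|\tfrac{\alpha}{2\pi}\log(mh/k) - \ell\big| \le T^{-1+\varepsilon}$ for some nonzero integer $\ell$ with $|\ell| \ll \log T$.

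The heart of the matter is bounding the off-diagonal contribution. Each surviving off-diagonal term forces $mh/k$ to be within $T^{-1+\varepsilon}$ of $e^{2\pi\ell/\alpha}$; writing $mh/k = a_\ell/b_\ell$ in lowest terms, this pins down $(a_\ell, b_\ell)$ as in $S_1$, up to the fact that $mh/k$ need not be reduced. The key point — and this is where the construction of the resonator enters — is that we deliberately removed one prime factor of $a_\ell$ and one of $b_\ell$ from the support of $r(n)$, hence from the support of $b(n)$. So for any admissible triple $(m,h,k)$ with $mh/k = a_\ell/b_\ell$, either the numerator $mh$ in reduced form is divisible by that excluded prime of $a_\ell$ (forcing $b(h)=0$ since $h \le N = T^{1/2-\delta} < a_\ell^{1/2}\cdots$... actually forcing the contribution to vanish because $h$ or $k$ cannot supply that prime while keeping $b(h)b(k)\ne 0$), or symmetrically on the $b_\ell$ side. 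I would argue that either $a_\ell$ or $b_\ell$ exceeds $T^{1/2-\varepsilon}$ — hence cannot be built from $m\le T$, $h,k\le N$ with $mh \le TN$ in a way that makes all of $a(m), b(h), b(k)$ nonzero — and conclude the off-diagonal sum is empty, or else bound it directly.

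More concretely, the count of off-diagonal triples with $mh \le TN$ and a fixed value $a_\ell/b_\ell$ is $\ll (TN)/(a_\ell b_\ell) + 1$, and each contributes $\ll (mhk)^{-1/2}\ll (a_\ell b_\ell)^{-1/2} N^{-1/2+\varepsilon}$ after summing over the at most $d(\cdot)$-many factorizations; summing over $|\ell|\ll\log T$ and using $a_\ell b_\ell \gg e^{2\pi\ell/\alpha}$ for the rapid decay, together with the trivial bound $a_\ell, b_\ell \le TN/k \cdot k = $ (sizes bounded by $TN$), gives a total error $\ll T^{1/2}\cdot (TN)^{1/2+\varepsilon} = T^{1+(1-2\delta)/2 + \varepsilon}$, and keeping track of the extra $N^{\varepsilon}$ losses and the cross terms this is absorbed into $O(T^{1 + (1-3\delta)/2 + 4\varepsilon})$. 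The main obstacle is precisely the bookkeeping in this last paragraph: ensuring that the combination of (a) the diophantine lower bound $a_\ell b_\ell \gg e^{2\pi\ell/\alpha}$, (b) the length restrictions $mh\le TN$, $k\le N$, and (c) the prime-removal in the resonator are used in the right order so that no off-diagonal term of size larger than the claimed error can occur; once the surviving frequencies are controlled, the remaining estimates are routine contour shifts and divisor-sum bounds exactly as in Lemma~\ref{lemma:saving}.
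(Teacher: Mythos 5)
Your setup (Poisson summation in $\ell$, the $\ell=0$ term reproducing the continuous integral, and the observation that the surviving $\ell\neq 0$ frequencies force the ratio of Dirichlet-series indices to lie within a window of width $e^{2\pi\ell/\alpha}T^{-1+\varepsilon}$ around $e^{2\pi\ell/\alpha}$) matches the paper, as does the idea of using the primes excluded from the resonator to annihilate off-diagonal terms. The gap is in the range where the reduced denominator is large, which is precisely the range that produces the error exponent $1+(1-3\delta)/2$. The paper splits according to whether $nh\lessgtr T^{1/2-\varepsilon}$ (in its notation the frequency is $m/(nh)$ with $m,n\le N$ coming from $B,\overline{B}$ and $h\le T$ from $D$). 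When $nh<T^{1/2-\varepsilon}$, both numerator and denominator of the reduced fraction are below $T^{1/2-\varepsilon}$, so the approximant $a_\ell/b_\ell$ is the unique one per $\ell$ and belongs to $S_1$; since $m=a_\ell r$ is then divisible by the excluded prime of $a_\ell$, one gets $b(m)=0$ and the whole range vanishes. Note that it is the variable from $B$, not from the long polynomial $D$, that must carry the excluded prime; your parenthetical about $h$ or $k$ ``supplying'' it has the roles confused, and the argument would fail if the excluded prime had to be absorbed by the coefficient $a(\cdot)$ of $D$, which is not supported away from $S$.

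In the complementary range $nh>T^{1/2-\varepsilon}$ the fraction need not reduce to an element of $S_1$, so the resonator gives nothing and the saving must come from elsewhere. The paper's point is that $nh>T^{1/2-\varepsilon}$ together with $n\le N=T^{1/2-\delta}$ forces $h>T^{\delta-\varepsilon}$, so the factor $h^{-1/2}$ alone saves $T^{-\delta/2+\varepsilon/2}$, and for fixed $m,n$ at most $T^{\varepsilon}$ values of $h$ fall into the diophantine window; combined with $\sum_{m,n\le N}|b(m)b(n)|(mn)^{-1/2}\ll N\,T^{\varepsilon}$ this gives $T\cdot T^{-\delta/2+2\varepsilon}\cdot N\cdot T^{\varepsilon}\ll T^{1+(1-3\delta)/2+4\varepsilon}$. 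Your substitute count via $a_\ell b_\ell$ and divisor bounds does not reach this: the step ``each contributes $\ll (a_\ell b_\ell)^{-1/2}N^{-1/2+\varepsilon}$'' is not justified, and the bound you state, $T^{1+(1-2\delta)/2+\varepsilon}$, is \emph{larger} than the claimed error term since $(1-2\delta)/2>(1-3\delta)/2$; a larger power of $T$ cannot be ``absorbed'' into a smaller one. Until you isolate the gain $h^{-1/2}\ll T^{-(\delta-\varepsilon)/2}$ from the long polynomial in the large-denominator range (or some equivalent saving), the proof does not close.
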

\begin{proof}

By Poisson summation we have,
\begin{multline*} \label{tobound}
\sum_{\ell} D(\tfrac 12 + i(\alpha \ell + \beta))|B(\tfrac 12 + i(\alpha \ell + \beta)|^2 \phi \big ( \frac{\ell}{T} \big ) = \\
= T \sum_{\ell} \sum_{\substack{m,n \leq N \\ h \leq T}} 
\frac{b(m)b(n)a(h)}{\sqrt{m n h}} \bigg ( \frac{m}{n h} \bigg )^{i\beta} 
\hat{\phi} \bigg ( T \bigg ( \frac{\alpha \log \frac{m}{n h}}{2\pi}
- \ell \bigg ) \bigg )
\end{multline*}
The term $\ell = 0$ contributes the main term (the continuous average). 
It remains to bound the remaining terms $\ell \neq 0$. 
Since $\hat{\phi}(x) \ll (1 + |x|)^{-A}$ the only surviving terms are those for which,
$$
\bigg | \frac{\alpha \log \frac{m}{n h}}{2\pi} - \ell \bigg | \leq \frac{1}{T^{1 - \varepsilon}}
$$
which in particular implies that $|\ell| \leq 2 \log T$.
We split our sum into two ranges, $nh < T^{1/2 - \epsilon}$ and $nh > T^{1/2 - \epsilon}$.  
\\
\textbf{First range}.
In the first range, for $(m, nh) = 1$, the real numbers $\log m/(nh)$
are spaced by at least $T^{-1 + \varepsilon}$ apart. 
Among all co-prime tuples with both $a_{\ell}, b_{\ell}$ less than $T^{1/2 - \varepsilon}$ there is at most one tuple satisfying,
$$
\bigg | \frac{\alpha \log \frac{a_{\ell}}{b_{\ell}}}{2\pi} - \ell \bigg | \leq \frac{1}{T^{1 - \varepsilon}}$$
Grouping the terms $m,n,h$ according to $m = a_{\ell} r$ and $n h= b_{\ell} r$, we re-write
the first sum sum over the range $ n h \leq T^{1/2 - \varepsilon}$ as follows,
$$
T  \sum_{\ell \neq 0} \frac{1}{\sqrt{a_{\ell}b_{\ell}}} 
\sum_{r} \frac{1}{r} \sum_{\substack{m,n  \leq N \\ n h \leq T^{1/2 - \varepsilon} \\ m = a_{\ell} r \\ nh = b_{\ell} r}} 
b(m)b(n)a(h) \bigg ( \frac{m}{n h} \bigg )^{i\beta} 
\hat{\phi} \bigg ( T \bigg ( \frac{\alpha \log \frac{m}{n h}}{2\pi}
- \ell \bigg ) \bigg )
$$
However by our choice of $r$ we have $b(a_{\ell}) = 0$, hence by multiplicativity
$b(m) = 0$, and it follows that the above sum is zero. 
\\
\textbf{Second range.} We now examine the second range $n h > T^{1/2 - \varepsilon}$. The condition $n h > T^{1/2 - \varepsilon}$ and $n \leq T^{1/2 - \delta}$
imply that $h > T^{\delta - \varepsilon}$. 
For fixed $m,n$ we see that there are at most $T^{\varepsilon}$ values of $h$ such that
$$
\bigg | \frac{\alpha \log \frac{m}{n h}}{2\pi} - \ell \bigg | \leq \frac{1}{T^{1-\varepsilon}}
$$
Putting this together we have
the following bound for the sum over $n h > T^{1/2 - \varepsilon}$, 
\begin{align*}
& T \left| \sum_{\ell \neq 0} \sum_{\substack{m,n \leq N, h \leq T \\ T^{1/2 - \varepsilon} < nh  }} 
\frac{b(m)b(n)a(h)}{\sqrt{m n h}} \bigg ( \frac{m}{n h} \bigg )^{i\beta} 
\hat{\phi} \bigg ( T \bigg ( \frac{\alpha \log \frac{m}{n h}}{2\pi}
- \ell \bigg ) \bigg ) \right| \\
&\ll 
T \sum_{|\ell| \leq 2 \log T}  \sum_{\substack{m, n \leq N}} \frac{|b(m)b(n)|}{\sqrt{m n}} \cdot T^{-\delta / 2 + \varepsilon} T^{\varepsilon}\\  
&\ll 
T^{1-\delta/2+3\varepsilon} \cdot N \sum_{n\leq N} \frac{|b(m)|^2}{n} 
\end{align*}
Then
$$
\sum_{n \leq N} \frac{|b(m)|^2}{m} \leq  \prod_{p \geq L^2} \bigg ( 1 + \frac{L^2}{p \log^2 p} \bigg ) \ll T^{\varepsilon}
$$
because $L^2 \sum_{p > L^2} p^{-1} (\log p)^{-2} \ll \log N / \log\log N
= o(\log T)$. Therefore the sum in the second range is bounded by $T^{1 - \delta / 2 + 4\varepsilon} N = T^{1 + (1 - 3\delta)/2 + 4\varepsilon}$. 
\end{proof}
In the above lemma we take $\delta = 1/3 + 4\varepsilon$,
so that $N = T^{1/6 - 4\varepsilon}$ and the error term is negligible
(that is $\ll T^{1 - \varepsilon}$).
Setting consecutively $D(s) = A(s)$ and $D(s) = 1$ we get,
$$
\mathcal{R} = \frac{\int_{\mathbbm{R}} A(\tfrac 12 + i(\alpha t + \beta)) |B(\tfrac 12 + i(\alpha t +\beta))|^2 \phi \big ( \frac{t}{T} \big ) dt}{\int_{\mathbbm{R}} |B(\tfrac 12 + i(\alpha t + \beta))|^2 \phi
\big ( \frac{t}{T} \big ) dt}
$$
plus a negligible error term. The above ratio was already worked out by Soundararajan
in \cite{Soundararajan} (see Theorem 2.1). Proceeding in the same way, we obtain that the above
ratio is equal to,
$$
\mathcal{R} = (1 + o(1)) \prod_{p} \bigg ( 1 + \frac{b(p)}{p} \bigg )
$$
Suppose that we were interested in small values, in which case $b(n) = \mu(n)\sqrt{n}r(n)$.
Then, 
$$
\mathcal{R} = (1 + o(1)) \prod_{p \not \in S} \bigg ( 1 - \frac{L}{p \log p} \bigg )
$$
Since 
\begin{equation*}
\sum_{p \in S} \frac{L}{p \log p} = \sum_{L^2 \leq p 
\leq L^2 + 2 \log T} \frac{L}{p \log p} = o \bigg ( \sqrt{\frac{\log N}{\log\log N}}
\bigg )
\end{equation*}
we find that
$$
\mathcal{R} = \exp \bigg ( - (1 +o(1)) \sqrt{\frac{\log N}{\log\log N}} \bigg )
$$
Recall that $N = T^{1/6 - 4\varepsilon}$. Letting $\varepsilon \rightarrow 0$
we obtain the claim since $\mathcal{R} \geq \min_{T \leq \ell \leq 2T} |\zeta(\tfrac 12
+ i(\alpha \ell + \beta))| + O(T^{-1/2})$. The large value estimate for the maximum 
of $\zeta(\tfrac 12 + i(\alpha \ell + \beta))$ is obtained in exactly
the same way by choosing $r(n) = \sqrt{n}r(n)$ instead. 

\section{Proof of the technical Proposition \ref{prop:aver}} 
Let $G(\cdot)$ be an entire function with rapid decay along vertical lines, 
that is $G(x + iy) \ll |y|^{-A}$ for any fixed $x$ and $A > 0$.
Suppose also that $G(-w) = G(w)$, $G(0) = 1$
and $\overline{G(w)} = G(\bar{w})$. An example of
such a function is $G(w) = e^{w^2}$. For such a function $G(x)$
we define a smooth function 
$$
W(x) := \frac{1}{2\pi} \int_{(\varepsilon)} x^{-w}  G(w) \cdot \frac{d w}{w}.
$$
Notice that $W$ is real.
\begin{lemma}[Approximate function equation]
We have, for $T < t < 2T$, 
  \[ | \zeta ( \tfrac{1}{2} + i t) |^2 = 2 \sum_{mn < T^{1 + \varepsilon}}
     \frac{1}{\sqrt{mn}} \cdot \bigg ( \frac{m}{n} \bigg )^{i t} 
     W \bigg ( \frac{2 \pi mn}{t}
\bigg ) + O (T^{- 2 / 3}) . \]
\end{lemma}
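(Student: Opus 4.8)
The plan is to derive this from the standard approximate functional equation for $\zeta(s)^2$, specialized to the critical line and smoothed against the weight $W$. First I would write, for $\tRe s = \tfrac 12$, the identity
$$
\zeta(s)^2 = \sum_{n \geq 1} \frac{d(n)}{n^s} \cdot \frac{1}{2\pi i} \int_{(\varepsilon)} \left(\frac{X}{n}\right)^{w} G(w) \frac{dw}{w} + (\text{dual term}) + (\text{error}),
$$
obtained by starting from $\frac{1}{2\pi i}\int_{(\varepsilon)} \zeta(s+w)^2 X^w G(w) \frac{dw}{w}$, expanding $\zeta(s+w)^2 = \sum d(n) n^{-s-w}$ term by term, and then moving the contour to $\tRe w = -\varepsilon$ (or further left), picking up the pole of $\zeta(s+w)^2$ at $w = 1-s$ and the pole of $G(w)/w$ at $w=0$. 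The symmetry $G(w) = G(-w)$ together with the functional equation $\zeta(s)^2 = \chi(s)^2 \zeta(1-s)^2$ converts the shifted integral into the dual sum with the complementary length; the residue at $w=0$ gives exactly $\zeta(s)^2$, which is what we want on the left. Choosing $X = t/(2\pi)$ balances the two sums at length $\asymp \sqrt{t/2\pi}$, but since we want the cruder cutoff $mn < T^{1+\varepsilon}$ I would instead push the contours far enough that the tails beyond $T^{1+\varepsilon}$ are absorbed into the error term using the rapid decay $G(x+iy) \ll_{x,A} |y|^{-A}$ and the convexity bound for $\zeta$ on vertical lines.

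The next step is to take absolute values. Writing $s = \tfrac 12 + it$ and using $\chi(\tfrac 12 + it)^2 \overline{\chi(\tfrac 12+it)^2} = 1$ (since $|\chi(\tfrac12+it)|=1$), the two terms of the approximate functional equation for $\zeta(s)^2$ — the direct sum over $mn$ and the dual sum over $mn$ — both collapse, after conjugation-symmetrization, to the single sum
$$
2 \sum_{mn < T^{1+\varepsilon}} \frac{d(n)}{\sqrt{mn}} \,\left(\frac{1}{n}\right)^{?}
$$
Here I need to be a little careful: the stated right-hand side is $2\sum_{mn<T^{1+\varepsilon}} (mn)^{-1/2}(m/n)^{it} W(2\pi mn/t)$, and indeed $\sum_{mn = N}(m/n)^{it} = \sum_{d \mid N} (d^2/N)^{it}$, which is the natural "split-variable" form of $\sum_{N} d(N) N^{-1/2} (\cdots)$ once one records that $|\zeta(\tfrac12+it)|^2 = \zeta(\tfrac12+it)\overline{\zeta(\tfrac12+it)} = \zeta(\tfrac12+it)\zeta(\tfrac12-it)$ and applies the approximate functional equation to this product directly (rather than to $\zeta^2$), so that the Dirichlet coefficient factorizes as $m^{-1/2-it} n^{-1/2+it}$ over ordered pairs $(m,n)$. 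This is in fact the cleanest route: apply the contour argument above to $\zeta(\tfrac12+it)\zeta(\tfrac12-it)$, whose Dirichlet series is $\sum_{m,n} m^{-1/2-it} n^{-1/2+it}$, insert $X^w G(w)/w$, shift to $\tRe w = -\varepsilon$, and use the functional equation on each factor; the factor $\chi(\tfrac12+it)\chi(\tfrac12-it) = 1$ makes the dual term identical in shape to the main term, which is the source of the overall constant $2$.

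The main obstacle — really the only point requiring care — is controlling the error term and justifying that the smooth cutoff $W(2\pi mn/t)$ may be replaced by the sharp truncation $mn < T^{1+\varepsilon}$ at the cost of only $O(T^{-2/3})$. For this I would keep $W$ exactly as written (so no truncation is literally needed: $W(x) \ll_A x^{-A}$ for $x>1$ already forces $mn \ll T^{1+\varepsilon}$ up to negligible error, and $W(x) \ll 1$ for $x \le 1$), and the genuine error comes from (i) the residue bookkeeping when shifting contours past $\tRe w = -\varepsilon$, where one meets $\zeta(\tfrac12+it+w)\zeta(\tfrac12-it+w)$ and must use a subconvex or even just the convexity bound $\zeta(\tfrac12+\sigma+it)\ll_\sigma t^{A}$ on the shifted line together with $G$'s decay to gain a power of $t$, and (ii) the contribution of the pole of one zeta-factor at $w = \tfrac12 \mp it$, which is at distance $\asymp t$ from the contour and hence contributes $\ll G(\tfrac12\mp it)\cdot(\text{stuff}) \ll_A t^{-A}$. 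Balancing the shift $\tRe w = -\delta$ against the resulting bound $T^{-\delta + \varepsilon}$ and optimizing gives an error of the shape $O(T^{-2/3})$ (any exponent strictly below the natural barrier would do; $2/3$ is comfortably attainable and is all that is needed downstream). I do not expect this to require anything beyond the classical convexity estimate and the rapid decay of $G$.
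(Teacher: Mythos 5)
Your overall route is the same as the paper's: both arguments produce the smoothed approximate functional equation for $\zeta(\tfrac12+it)\zeta(\tfrac12-it)$ by inserting $G(w)/w$, shifting contours, using the functional equation to fold the dual sum back onto the main sum (whence the factor $2$), and then discarding $mn>T^{1+\varepsilon}$ via the rapid decay $W(x)\ll_A x^{-A}$. The paper simply quotes the exact identity (Iwaniec--Kowalski, Theorem 5.3), in which the weight is $\pi^{-w}G(w)g_t(w)/w$ with $g_t(w)$ a ratio of Gamma factors; the real content of its proof is the replacement of $g_t(w)$ by $(t/2)^w$ via Stirling, which is where the error term originates, followed by a shift \emph{rightward} to $\tRe w=1+\varepsilon$ to expand the Dirichlet series. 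Your plan of shifting leftward and invoking the functional equation is essentially the derivation of that quoted identity, so there is no structural difference.

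There is, however, one concrete quantitative gap. You assert that the classical convexity bound suffices and that ``$2/3$ is comfortably attainable.'' It is not, by this route: the dominant error is the Stirling correction $g_t(w)=(t/2)^w\bigl(1+O((1+|w|^2)/t)\bigr)$, and the $O(1/t)$ term must be multiplied by a bound for $\zeta(\tfrac12+\varepsilon+it+w)\zeta(\tfrac12+\varepsilon-it+w)$ on the contour. Convexity gives $\ll t^{1/2+\varepsilon}$ for this product, hence only an error $O(T^{-1/2+\varepsilon})$; the paper obtains $O(T^{-2/3})$ precisely by invoking Weyl's subconvexity bound, which gives $\ll |t|^{1/3}+|w|^{1/3}$ for the product and hence $t^{1/3}\cdot t^{-1}=t^{-2/3}$. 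Your weaker exponent would still (barely) survive the downstream application, since the error enters as $T\cdot T^{-1/2+\varepsilon}\cdot T^{\theta}$ with $\theta<\tfrac12$, but it does not prove the lemma as stated. Relatedly, your item (i) never isolates the Gamma-factor approximation as the error source: if you insert the bare weight $X^wG(w)/w$ with $X=t/(2\pi)$ and no Gamma factors, the functional-equation step leaves residual $\chi$-factors in the dual integral whose Stirling approximation is exactly this same step in disguise, so it cannot be avoided.
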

\begin{rem}
Of course we could work with the usual smoothing $V$ involving the 
Gamma factors on the Mellin transform side. We believe the 
smoothing $W(2\pi m n / t)$ to be (slightly) more transparent.
\end{rem}
\begin{proof}
By a standard argument (see \cite{IwaniecKowalski}, Theorem 5.3), 
\begin{equation} \label{standard}
|\zeta(\tfrac 12 + it)|^2 = \frac{2}{2\pi i} \int_{(\varepsilon)}
\zeta(\tfrac 12 + it + w)\zeta(\tfrac 12 - it + w)
\pi^{-w} G(w) \cdot 
g_{t}(w) 
\frac{dw}{w}.
\end{equation}
with $g_t(w) = \Gamma ( \tfrac 14 + \tfrac {it}{2} + \tfrac{w}{2} )
\Gamma ( \tfrac 14 - \tfrac{it}{2} + \tfrac{w}{2}) / \big (
\Gamma ( \tfrac 14 + \tfrac{it}{2} ) \Gamma (\tfrac 14 - \tfrac {it}{2}) \big )
$
By Stirling's formula $g_t(w) = (t/2)^{w} \cdot (1 + O((1 + |w|^2)/t))$
uniformly for $w$ lying in any fixed half-plane and $t$ large.
Using Weyl's subconvexity bound, 
on the line $\tRe w = \varepsilon$ we have
$\zeta(\tfrac 12 + it + w)\zeta(\tfrac 12 - it + w) \ll |t|^{1/3} + |w|^{1/3}$.
Therefore, the error term $O((1 + |w|^2)/t)$ 
in Stirling's approximation contributes
an error term of $O(T^{-2/3})$ in (\ref{standard}). Thus
$$
|\zeta(\tfrac 12 + it)|^2 = \frac{2}{2\pi i} \int_{(\varepsilon)} 
\zeta(\tfrac 12 + it + w)\zeta(\tfrac 12 - it + w)
\cdot \bigg ( \frac{t}{2\pi} \bigg )^{w} G(w) \cdot \frac{dw}{w}
+ O(T^{-2/3}).
$$
Shifting the line of integration to $\tRe w = 1 + \varepsilon$ we collect
a pole at $w = \tfrac 12 \pm it$, it is negligible because
$G(\tfrac 12 \pm it) \ll |t|^{-A}$. Expanding $\zeta(\tfrac 12 
+ it + w)\zeta(\tfrac 12 - it + w)$ into a Dirichlet series on the line
$\tRe w = 1 + \varepsilon$ we conclude that
$$
|\zeta(\tfrac 12 + it)|^2 = 2\sum_{m , n \geq 1} \frac{1}{\sqrt{m n}}
\cdot \bigg ( \frac{m}{n} \bigg )^{it} W \bigg ( \frac{2\pi m n}{t} \bigg )
+ O(T^{-2/3}).
$$
Notice that $W(x) = O_A(x^{-A})$ for $x > 1$. Since $T \leq t \leq 2T$
if $m n > T^{1 + \varepsilon}$ then $2\pi m n / t \gg T^{\varepsilon}$. Therefore
we can truncate
the terms with $m n > T^{1 + \varepsilon}$ making an error term of
at most $\ll T^{-A}$. The claim follows.
\end{proof}

%
%
Recall also that
\[ B (s) \assign \sum_{n \leqslant T^{\theta}} \frac{b (n)}{n^s} \]
Therefore, 
\begin{eqnarray} \label{sex}
 \nonumber \mathcal{J} & \assign & \sum_{\ell \in \mathbbm{Z}} | \zeta ( \tfrac{1}{2} +
  i (\alpha \ell + \beta)) B ( \tfrac{1}{2} + i (\alpha \ell + \beta)) |^2 
  \cdot \phi
  ( \frac{\ell}{T})\\ \nonumber
  & = & 2\sum_{m n < T^{1 + \varepsilon}} \frac{1}{\sqrt{mn}} \sum_{h, k \leqslant
  T^{\theta}} \frac{b (h) b (k)}{\sqrt{hk}} \sum_{\ell \in \mathbbm{Z}} \bigg (
  \frac{mh}{nk} \bigg )^{i (\alpha \ell + \beta)} W \bigg 
  ( \frac{2 \pi mn}{\alpha \ell + \beta}
\bigg ) \phi \big (
  \frac{\ell}{T} \big ) + O \big (T^{5/6 + \varepsilon} \big ) \\  
 & = & 2 \sum_{m n < T^{1 + \varepsilon}} \frac{1}{\sqrt{mn}} \sum_{h, k \leqslant
  T^{\theta}} \frac{b (h) b (k)}{\sqrt{hk}} 
  \cdot \bigg ( \frac{ m h}{n k} \bigg )^{i \beta } \sum_{\ell \in \mathbbm{Z}} 
 \hat{f}_{m, n, T} \bigg ( \frac{\alpha \log
     \tfrac{mh}{nk}}{2 \pi} - \ell \bigg ) + O(T^{5/6 + \varepsilon})
%
\end{eqnarray}
using Poisson summation in the sum over $\ell$, with
$$
f_{m,n,T}(x) :=  W \bigg ( \frac{2 \pi mn}{\alpha x + \beta
} \bigg ) \cdot \phi \big 
( \frac{x}{T} \big )
$$
\subsection{The main term $\ell = 0$}
Consider the sum with $\ell = 0$,
\begin{align*}
& 
2\sum_{m n < T^{1 + \varepsilon}} \frac{1}{\sqrt{mn}} \sum_{h, k \leqslant
  T^{\theta}} \frac{b (h) b (k)}{\sqrt{hk}}  
 \cdot \bigg ( \frac{m k}{n h} \bigg )^{i\beta} \cdot \hat{f}_{m,n,T} \bigg ( \frac{\alpha \log \frac{mk}{nh}}{2\pi} \bigg )  \\
& = 2\sum_{m n < T^{1 + \varepsilon}} \frac{1}{\sqrt{m n}} \sum_{h , k \leq T^{\theta}}
 \frac{b(h) b(k)}{\sqrt{h k}} \cdot \int_{\mathbbm{R}} \bigg ( \frac{m k}{n h}
\bigg )^{i (\alpha t + \beta)} W \bigg ( \frac{2\pi m n}{\alpha t + \beta}
\bigg ) \phi \bigg ( 
\frac{t}{T} \bigg ) dt
\end{align*}

Interchanging the sums and the integral, this becomes
\begin{equation}
\label{desired}
\int_{\mathbbm{R}} |B(\tfrac 12 + i (\alpha t + \beta))|^2 \cdot
2 \sum_{m n < T^{1 + \varepsilon}} \frac{1}{\sqrt{m n}} \cdot \bigg ( \frac{m}{n}
\bigg )^{i (\alpha t + \beta) } W \bigg ( \frac{2\pi m n}{\alpha t + \beta}
\bigg ) \phi \big ( \frac{t}{T} \big ) dt
\end{equation}
By the approximate functional equation,
$$
2 \sum_{m n < T^{1 + \varepsilon}} \frac{1}{\sqrt{mn}} \bigg ( \frac{m}{n} \bigg )^{i ( \alpha t + \beta)} W \bigg ( \frac{2\pi m n}{\alpha t + \beta} \bigg ) = 
|\zeta(\tfrac 12 + i( \alpha t
+ \beta)|^2 + O(T^{-2/3}).
$$
Therefore (\ref{desired}) is
$$
\int_{\mathbbm{R}} |B(\tfrac 12 + i(\alpha t + \beta))
\zeta(\tfrac 12 + i(\alpha t + \beta))|^2 \phi 
\bigg ( \frac{t}{T} \bigg ) dt + O(T^{1 - \varepsilon})
$$
as desired. 

\subsection{The terms $\ell \neq 0$}
Since $$
\hat{f}_{m,n,T} \bigg ( \frac{\alpha \log \frac{m h}{n k} }{2\pi} + \ell \bigg )
= \overline{\hat{f}_{m,n,T} \bigg ( \frac{\alpha 
    \log \frac{n k}{m h}}{2\pi} - \ell
\bigg )}
$$
we can re-write the sum over $\ell \neq 0$ so as to have $\ell > 0$
in the summation,
$$
\mathcal{J}_{0} = 2 \sum_{\ell > 0} \sum_{\substack{m n < T^{1 + \varepsilon}
\\ h, k \leq T^{\theta}
}} \frac{b(h)b(k)}{\sqrt{m n h k}}
\cdot 2 \tRe \bigg ( \bigg (\frac{ m h}{n k} \bigg )^{i \beta} \hat{f}_{m,n,T}
\bigg ( \frac{\alpha \log \frac{m h}{n k}}{2\pi} - \ell \bigg ) \bigg )
$$
Differentiating repeatedly and using that $W$ and all derivatives of $W$ are Schwarz class, 
we find that for $mn < T^{1 + \varepsilon}$,
$
f^{(k)}_{m,n,T}(x) \ll T^{-k}
$
for all $x$. Therefore for any fixed $A > 0$,
$$
\hat{f}_{m,n,T}(x) \ll_{A} T \big (1 + T |x| \big )^{-A}
$$
It follows that the only integers $m,n,k,h,\ell$ that contribute to
$\mathcal{J}_0$ are the $m,n,k,h,\ell$ for which
$$
\bigg | \frac{\alpha}{2\pi} \cdot \log \frac{m h}{n k} - \ell \bigg | \leq
T^{-1  + \eta}.
$$
for some small, but arbitrary $\eta > 0$. 
This condition implies that
\begin{equation} \label{condition}
\bigg | \frac{m h}{n k} - e^{2\pi \ell / \alpha} \bigg | \leq e^{2\pi\ell/ \alpha}
T^{-1 + \eta}
\end{equation}
and we might as-well restrict the sum in $\mathcal{J}_0$ to those $m,n,k,h,\ell$
satisfying this weaker, but friendlier, condition. Thus,
\begin{equation} \label{Mainequation}
\mathcal{J}_{0} = 4 \tRe \sum_{\ell > 0}
\sum_{\substack{m n < T^{1 + \varepsilon} \\ h,k \leq T^{\theta}
\\ m,n,h,k \text{ satisfy } (\ref{condition})}} \frac{b(h)b(k)}{\sqrt{m n h k}} 
\cdot \bigg ( \frac{m k}{n h} \bigg )^{i \beta}
\hat{f}_{m,n,T} \bigg ( \frac{\alpha \log \frac{m k}{n h}}{2\pi} 
 - \ell \bigg ) + O_A\bfrac{1}{T^A}.
\end{equation}
Now for a fixed $\ell > 0$, consider the inner sum over $m, n, h, k$ in (\ref{Mainequation}).  We group together terms in the following way: If the integres $m, n, k, h$
satisfy (\ref{condition}) then we let $a_{\ell} = mk / (mk, nh)$ and $b_{\ell} 
= nh
/ (mk, nh)$ so that $(a_{\ell}, b_{\ell}) = 1$.  
We group together all multiples of $a_{\ell}, b_{\ell}$ of the form $mk = a_{\ell} r$ and $nh = b_{\ell} r$ with a common $r > 0$. The $a_{\ell}, b_{\ell}$ 
are co-prime and satisfy
\begin{equation}
  \bigg | \frac{a_{\ell}}{b_{\ell}} - e^{2 \pi \ell / \alpha} \bigg | < 
\frac{e^{2 \pi \ell / \alpha}}{T^{1 -
  \eta}} . \label{pq}
\end{equation}
This allows us to write
\begin{equation}
  \mathcal{J}_{0} = 4 \tRe \sum_{\ell > 0} \sum_{\substack{
    a_{\ell}, b_{\ell} \geqslant 1\\
    (a_{\ell}, b_{\ell}) = 1\\
    \text{satisfy } ( \ref{pq})
  }} \sum_{r \geqslant 1} \sum_{\substack{
    mn \leqslant T^{1 + \varepsilon}\\
    h, k \leqslant T^{\theta}\\
    nh = a_{\ell} r\\
    mk = b_{\ell} r
  }} \frac{b (h) b (k)}{\sqrt{mknh}} \cdot \bigg (
  \frac{a_{\ell}}{b_{\ell}} \bigg )^{\mathi \beta} \cdot \hat{f}_{m, n, T} \bigg (
  \frac{\alpha \log (a_{\ell} /
  b_{\ell})}{2 \pi} - \ell \bigg ) . \label{fourtherquation}
\end{equation}
It is useful to have a bound for the size of $b_{\ell}$ in the above sum. Equation
(\ref{pq}) implies that $a_{\ell} \asymp b_{\ell} \cdot e^{2 \pi \ell / \alpha}$.
Furthermore, since $mn < T^{1 + \varepsilon}$, $h, k \leqslant T^{\theta}$
and $a_{\ell} r = mk$, $b_{\ell} r = nh$ we have $a_{\ell} \cdot
b_{\ell} < mnkh < T^{1 + 2\theta + \varepsilon}$. Combining $a_{\ell} \asymp b_{\ell}
\cdot 
e^{2 \pi \ell / \alpha}$ and $a_{\ell} b_{\ell} < T^{1 +2\theta + \varepsilon}$ we obtain $b_{\ell} < T^{1 / 2 + \theta + \varepsilon} \cdot e^{- \pi \ell / \alpha}$. 
Let
\begin{align*}
K_{\ell} & := T^{1/2 - \eta} e^{-\pi \ell / \alpha} \\
M_{\ell} & := T^{1/2 + \theta + \varepsilon} e^{-\pi\ell/\alpha}
\end{align*}
We split the sum according to
whether $b_{\ell} < K_{\ell}$ or $b_{\ell} > K_{\ell}$, getting 
\[ \mathcal{J}_{0}= 4 \tRe \sum_{\ell > 0} \sum_{\substack{
     b_{\ell} < M_{\ell}\\
     a_{\ell} \geqslant 1\\
     (a_{\ell}, b_{\ell}) = 1\\
     \text{satisfy } ( \ref{pq})
   }} \frac{(a_{\ell} / b_{\ell})^{\mathi \beta}}{\sqrt{a_{\ell} b_{\ell}}} \sum_{r
   \geqslant 1} \frac{1}{r} \sum_{\substack{
     mn \leqslant T^{1 + \varepsilon}\\
     h, k \leqslant T^{\theta}\\
     nh = b_{\ell} r\\
     mk = a_{\ell} r
   }} \hat{f}_{m, n, T} \bigg (
 \frac{\alpha \log (a_{\ell} / b_{\ell})}{2 \pi} - \ell
\bigg )
   = 4\tRe (S_1 + S_2) \]
where $S_1$ is the sum over $b_{\ell} \leq K_{\ell}$ and $S_2$ is the corresponding
sum over $M_{\ell} > 
b_{\ell} > K_{\ell}$. 
To finish the proof of the Proposition it remains to
evaluate $S_1$ and $S_2$. The sum $S_1$ can give a main term contribution in the context of Theorem \ref{thm:mirage} depending on the Diophantine properties of $a$, while bounding $S_1$ as an error term in the context of Theorem \ref{thm:nonvanishing} is relatively subtle.  In contrast, $S_2$ is always negligible.  

We first furnish the following expression for $S_1$.
\begin{lemma}
For each $\ell > 0$ there is at most one tuple of co-prime integers 
$(a_{\ell},b_{\ell})$ such that $a_{\ell}b_{\ell} > 1$ , $b_{\ell} < K_{\ell}
= T^{1/2 - \eta} e^{-\pi \ell / \alpha}$ and
such that
\begin{equation} \label{condition2}
  \bigg | \frac{a_{\ell}}{b_{\ell}} - e^{2\pi \ell / \alpha} \bigg | \leq
  \frac{e^{2\pi \ell / \alpha}}{T^{1 - \eta}}.
\end{equation}
We denote by $\sum_{\ell}^{*}$ the sum over $\ell$'s satisfying the
above condition. Then,
  $$ S_1 = T  \cdot {\sum_{\ell > 0}}^{*}
  \frac{(a_{\ell}/b_{\ell})^{i\beta}}{\sqrt{a_{\ell} b_{\ell}}} 
  \int_{- \infty}^{\infty} \phi \bigg (
  \frac{t}{T} \bigg )
  \cdot \exp \bigg ( - 2\pi i t \bigg (
  \frac{\alpha \log  \frac{ a_{\ell}}{b_{\ell}}}{2\pi} - \ell \bigg ) \bigg )
  \cdot F(a_{\ell},b_{\ell}, t) d t
  $$
where
\begin{align*}
F (a_{\ell},b_{\ell}, t) & := \sum_{h,k \leq T^{\theta}} b(h) b(k)
\sum_{r \geq 1} \frac{1}{r} \sum_{\substack{m,n \geq 1 \\ mk = a_{\ell} r\\
nh = b_{\ell} r}} W \bigg ( \frac{\alpha t + \beta}{2\pi m n} \bigg )
\\
& = \sum_{m,n \leq T^{\theta}} \frac{b(m)b(n)}{m n}
\cdot (m a_{\ell}, n b_{\ell}) \cdot 
 \mathcal{H} \bigg ( (\alpha t + \beta) \cdot 
\frac{(ma_{\ell},nb_{\ell})^2}{2\pi m a_{\ell} n b_{\ell}} \bigg )
\end{align*}
and
$$
\mathcal{H}(x) = \frac{1}{2\pi i}\int_{(\varepsilon)} 
\zeta(1 + 2w) \cdot x^w G(w) \cdot \frac{dw}{w}
 = 
\begin{cases}
  \tfrac 12 \cdot \log x + \gamma  + O_{A}(x^{-A}) & \text{ if } x \gg 1
\\
 O_A(x^{A}) & \text{ if } x \ll 1
\end{cases}
$$
\end{lemma}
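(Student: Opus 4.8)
The plan is to prove the three assertions of the lemma in turn. \emph{Uniqueness:} suppose, for a fixed $\ell$, that $(a,b)$ and $(a',b')$ are two distinct reduced fractions, each with denominator below $K_\ell=T^{1/2-\eta}e^{-\pi\ell/\alpha}$ and each satisfying $(\ref{condition2})$. As the fractions are reduced and distinct, $ab'-a'b\neq 0$, so
$$\Big|\frac ab-\frac{a'}{b'}\Big|=\frac{|ab'-a'b|}{bb'}\ \ge\ \frac1{bb'}\ >\ \frac1{K_\ell^{2}}\ =\ \frac{e^{2\pi\ell/\alpha}}{T^{1-2\eta}},$$
while the triangle inequality and $(\ref{condition2})$ give $|a/b-a'/b'|\le 2e^{2\pi\ell/\alpha}T^{-1+\eta}$; for $\eta$ fixed small and $T$ large these are incompatible, so at most one admissible pair exists. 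This Diophantine dichotomy — two reduced fractions with denominators $<K_\ell$ cannot both be $e^{2\pi\ell/\alpha}T^{-1+\eta}$-close to $e^{2\pi\ell/\alpha}$ — is the one genuinely structural point, and it is exactly what dictates the threshold $K_\ell$ at which $S_1$ is split off from $S_2$. Denote by $\sum_{\ell}^{*}$ the sum over the $\ell$ for which an admissible pair exists.

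\emph{The formula for $S_1$:} recall that $S_1$ emerged from Poisson summation, so each of its terms carries the factor $\hat f_{m,n,T}\!\big(\tfrac{\alpha}{2\pi}\log(a_\ell/b_\ell)-\ell\big)$, with $f_{m,n,T}(t)=W\!\big(2\pi mn/(\alpha t+\beta)\big)\phi(t/T)$. I would substitute $\hat f_{m,n,T}(y)=\int_{\mathbb R}f_{m,n,T}(t)e^{-2\pi i ty}\,dt$ and interchange the $t$-integral with the sum over $r,h,k$; this is justified because for $mn\le T^{1+\varepsilon}$, $h,k\le T^{\theta}$, $mk=a_\ell r$, $nh=b_\ell r$ only $O(T^{1/2+\theta+\varepsilon})$ integer values of $r$ occur, and $W(x)\ll_A x^{-A}$ controls the $r$-tail. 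The resulting bracketed inner sum is $F(a_\ell,b_\ell,t)$ but for the constraint $mn\le T^{1+\varepsilon}$, which — since $\alpha t+\beta\asymp T$ whenever $\phi(t/T)\neq0$ and $W$ decays rapidly — may be removed at the cost of $O_A(T^{-A})$ per $\ell$; as only $O(\log T)$ values of $\ell$ occur in $S_1$, the total error stays $O_A(T^{-A})$. One is left with exactly
$$S_1={\sum_{\ell>0}}^{*}\frac{(a_\ell/b_\ell)^{i\beta}}{\sqrt{a_\ell b_\ell}}\int_{-\infty}^{\infty}\phi\!\Big(\frac tT\Big)\exp\!\Big(-2\pi i t\big(\tfrac{\alpha}{2\pi}\log\tfrac{a_\ell}{b_\ell}-\ell\big)\Big)F(a_\ell,b_\ell,t)\,dt,$$
which is the asserted identity ($S_1=\sum_{\ell>0}H(\ell)$ in the notation of Proposition~\ref{prop:aver}, the missing pairs contributing $0$).

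\emph{The closed form of $F$ and the size of $\mathcal H$:} fixing $h,k$ in $F$, the equations $mk=a_\ell r$, $nh=b_\ell r$ force $r$ to run over the multiples of $R:=\operatorname{lcm}\big(k/(k,a_\ell),\,h/(h,b_\ell)\big)$, with $m,n$ then determined and $mn=a_\ell b_\ell r^2/(hk)$. Writing $W(x)=\tfrac1{2\pi i}\int_{(\varepsilon)}x^{-w}G(w)\tfrac{dw}{w}$ and summing $\sum_{R\mid r}r^{-1-2w}=R^{-1-2w}\zeta(1+2w)$ collapses the $r$-sum to $\tfrac1R\,\mathcal H\!\big(\tfrac{(\alpha t+\beta)hk}{2\pi a_\ell b_\ell R^{2}}\big)$, with $\mathcal H(x)=\tfrac1{2\pi i}\int_{(\varepsilon)}\zeta(1+2w)x^{w}G(w)\tfrac{dw}{w}$; a short manipulation of the gcd's (using $(a_\ell,b_\ell)=1$) then brings $\tfrac1R$ and the argument of $\mathcal H$ into the displayed form once $h,k$ are relabelled as the summation variables. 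For the size of $\mathcal H$ I would shift contours: for $x\ge1$ to $\tRe w=-A$, crossing only the pole at $w=0$, where $\zeta(1+2w)G(w)/w$ has a double pole with residue $\tfrac12\log x+\gamma$ (from $\zeta(1+2w)=\tfrac1{2w}+\gamma+O(w)$, $G(0)=1$, $G'(0)=0$), the shifted integral being $O_A(x^{-A})$ because $G$ decays faster than $\zeta$ grows; for $x\le1$ to $\tRe w=A$, where no pole is met and the integral is $O_A(x^{A})$. Apart from the uniqueness step, which is the conceptual heart of the matter, the only points needing real care are this gcd bookkeeping in the $r$-summation and the verification that the two truncations — the Fourier unfolding and the re-extension of the $mn$-range — cost only negligibly; I do not expect a serious analytic obstacle here, as the lemma is essentially the set-up for the delicate analysis of $S_1$ carried out afterwards.
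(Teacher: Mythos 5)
Your proposal is correct and follows essentially the same route as the paper: uniqueness via the spacing of Farey fractions with denominator below $K_{\ell}$, unfolding $\hat{f}_{m,n,T}$ into the $t$-integral and completing the $mn$-sum with negligible error, and evaluating the inner sum through the Mellin representation of $W$ so that the $r$-sum produces $\zeta(1+2w)$ and the contour shift yields $\mathcal{H}$ (your lcm parametrization of $r$ is equivalent to the paper's reduction of $nha_{\ell}=mkb_{\ell}$ to a divisibility condition, via the identity $\mathrm{lcm}\big(k/(k,a_{\ell}),h/(h,b_{\ell})\big)=hk/(ha_{\ell},kb_{\ell})$ for $(a_{\ell},b_{\ell})=1$). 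One remark: your final identity carries no prefactor $T$ while the lemma as printed does; your version is the consistent one (it matches the expression for $H(\ell)$ in Proposition \ref{prop:aver} together with $\mathcal{E}=4\,\tRe\sum_{\ell>0}H(\ell)$), so the extra $T$ in the paper's display should be regarded as a typo rather than a gap in your argument.
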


\begin{Proof}
  Given $\ell$, there is at most one $b_{\ell} \leqslant K_{\ell}$ 
  for which there is a
  co-prime $a_{\ell}$ such that $( \ref{condition2})$ 
  holds, because Farey fractions with
  denominator $< K_{\ell}$ are spaced at least $K_{\ell}^{-2} = e^{2\pi \ell / \alpha}
  T^{-1 + 2\eta}$
  far apart. 
  Thus for each $\ell$, the
  sum over $a_{\ell}, b_{\ell}$ in
  $S_1$ consists of at most one element $(a_{\ell}, b_{\ell})$,
  \[ S_1 = {\sum_{\ell > 0}}^* \frac{(a_{\ell} / b_{\ell})^{\mathi
      b}}{\sqrt{a_{\ell} b_{\ell}}} \sum_{r \geqslant 1} \frac{1}{r}
     \sum_{\substack{
       mn \leqslant T^{1 + \varepsilon}\\
       h, k \leqslant T^{\theta}\\
       nh = b_{\ell} r\\
       mk = a_{\ell} r
     }} \frac{b (h) b (k)}{\sqrt{mknh}} \cdot 
     \hat{f}_{m, n, T} \bigg ( \frac{\alpha \log (a_{\ell} /
     b_{\ell})}{2 \pi} - \ell \bigg ) \]
  To simplify the above expression we write
  \begin{eqnarray*}
    \hat{f}_{m, n, T} (x) & = & \int_{- \infty}^{\infty} W \bigg ( \frac{2 \pi
    mn}{\alpha t + \beta} \bigg ) 
    \phi \bigg (\frac{t}{T} \bigg ) e^{- 2 \pi \mathi xt} \mathd t
  \end{eqnarray*}
  The sum $S_1$ can be now re-written as,
  \begin{multline*}
  T {\sum_{\ell > 0}}^* \frac{(a_{\ell}/b_{\ell})^{ib}}{\sqrt{a_{\ell}b_{\ell}}}
  \int_{- \infty}^{\infty} \phi \bigg (\frac{t}{T} \bigg ) 
  \exp \bigg ( - 2\pi i
  t \bigg ( \frac{\alpha \log \frac{m h}{n k}}{2\pi} - \ell \bigg )\bigg )
  \cdot \\
     \sum_{r \geqslant 1}
     \frac{1}{r} \sum_{h, k \leqslant T^{\theta}} b (h) b (k)
     \sum_{\substack{
       mn < T^{1 + \varepsilon}\\
       nh = b_{\ell} r, mk = a_{\ell} r
     }} W \bigg ( \frac{2 \pi mn}{\alpha t + \beta} 
     \bigg ) \mathd t. \end{multline*}
  Since $W (x) \ll x^{- A}$ for $x > 1$ and $at +b \asymp T$ we complete the sum
  over $mn < T^{1 + \varepsilon}$ to $m, n \geqslant 1$ making a negligible
  error term $\ll_A T^{- A}$. To finish the proof it remains to
  understand the expression
 \begin{equation} \label{remains}
  \sum_{h ,k \leq T^{\theta}} b(h)b(k)
  \sum_{r} \frac{1}{r} 
  \sum_{\substack{m, n \geq 1 \\ m k = a_{\ell} r \\ nh = b_{\ell} r}}
  W \bigg ( \frac{2\pi m n}{\alpha t + \beta} \bigg )
\end{equation}
  We notice that
\begin{equation} \label{almost}
  \sum_{r} \frac{1}{r} \sum_{\substack{m,n \geq 1 \\
 n h = b_{\ell} r \\ m k = a_{\ell} r}} W\bigg ( \frac{2\pi m n}{\alpha t + \beta}
  \bigg ) = \frac{1}{2\pi} \int_{(\varepsilon)}
  \sum_{r} \frac{1}{r}\sum_{\substack{m,n \geq 1 \\
      n h = b_{\ell} r \\ m k = a_{\ell} r}} \frac{1}{(m n)^{w}}
\cdot
  \bigg ( \frac{\alpha t + \beta}{2\pi} \bigg )^{w} G(w) \frac{d w}{w}
\end{equation}
Furthermore $nh = b_{\ell} r$ and $mk = a_{\ell} r$
imply that $mk b_{\ell} = nh a_{\ell}$. On the other hand
since $a_{\ell}$ and $b_{\ell}$ are co-prime
the equality $mk b_{\ell} = nh a_{\ell}$ implies that
there exists a unique $r$ such that $nh = b_{\ell} r$
and $mk = a_{\ell} r$. We notice as-well that this
unique $r$ can be expressed as $((a_{\ell} b_{\ell})/(m k n h))^{-1/2}$. 
Therefore we have the equality,
$$
 \sum_{r} \frac{1}{r}\sum_{\substack{m,n \geq 1 \\
      n h = b_{\ell} r \\ m k = a_{\ell} r}} \frac{1}{(m n)^{w}}
= \sum_{\substack{m,n \geq 1 \\ n h a_{\ell} = mk b_{\ell}}}
\frac{1}{(mn)^w} \cdot \sqrt{ \frac{a_{\ell} b_{\ell}}{m k n h} }
$$
We express the condition $n h a_{\ell} = mk b_{\ell}$
as $h a_{\ell} | k b_{\ell} m$ and $n = k b_{\ell} m / (h a_{\ell})$ so
as to reduce the double sum over $m,n$ to a single
sum over $m$. Furthermore the condition
$h a_{\ell} | k b_{\ell} m$ can be dealt with by noticing
that it is equivalent to $h a_{\ell} / (h a_{\ell}, k b_{\ell}) | m$.
Using these observations we find that,
\begin{equation*}
 \sum_{\substack{m,n \geq 1 \\ n h a_{\ell} = mk b_{\ell}}}
\frac{1}{(mn)^w} \cdot \sqrt{ \frac{a_{\ell} b_{\ell}}{m k n h} }
 = \frac{(h a_{\ell},k b_{\ell})}{h k} \cdot \zeta(1 + 2w) \cdot \bigg ( 
\frac{ (h a_{\ell}, k b_{\ell})^2}{h a_{\ell} k b_{\ell}} \bigg )^{w} 
\end{equation*}
Plugging the above equation into (\ref{almost}) it follows that
%
\begin{equation*}
  \sum_{r \geq 1} \frac{1}{r} \sum_{\substack{m ,n \geq 1 \\
      m k = b_{\ell} r \\ n h = a_{\ell} r}}
  W \bigg ( \frac{2\pi m n}{\alpha t + \beta} \bigg )
= \frac{(h a_{\ell}, k b_{\ell})}{h k} \cdot \mathcal{H} \bigg ( \frac{\alpha t + \beta}{2\pi m n} \bigg )
\end{equation*}
An easy calculation reveals that $\mathcal{H}(x) = (1/2)\log x + \gamma + O_A(x^{-A})$ for $x \gg 1$ and that $\mathcal{H}(x) = O_A(x^A)$ for $x \ll 1$.
We conclude that equation (\ref{remains}) equals to
\begin{equation*}
  \sum_{h , k \leq T^{\theta}} 
\frac{b(h)b(k)}{h k} \cdot (h a_{\ell}, k b_{\ell}) \cdot \mathcal{H} \bigg
 ( \frac{\alpha t + \beta}{2\pi m n} \bigg )
\end{equation*}
as desired. 
\end{Proof}

The second sum $S_2$ can be bounded directly.

\begin{lemma}
  We have $S_2 \ll T^{1 / 2 + \theta + \varepsilon}$.
\end{lemma}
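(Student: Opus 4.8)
The plan is to estimate $S_2$ trivially, term by term, with no cancellation: the hypothesis $b_\ell>K_\ell$ defining $S_2$ forces $a_\ell b_\ell$ to be as large as $T^{1-2\eta}$, and this makes the surviving terms sparse. The structural fact to exploit is that $mk=a_\ell r$ and $nh=b_\ell r$ force $mknh=a_\ell b_\ell r^2$; moreover, since $(a_\ell,b_\ell)=1$ the pair is uniquely recovered from $m,n,h,k$, and the window condition gives $a_\ell\asymp b_\ell e^{2\pi\ell/\alpha}$, whence $a_\ell b_\ell\gg K_\ell^2 e^{2\pi\ell/\alpha}=T^{1-2\eta}$. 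I would therefore bound
\[
|S_2|\ \le\ \sum_{\ell>0}\ \sum_{\substack{(a_\ell,b_\ell)=1\\ K_\ell<b_\ell<M_\ell,\ (\ref{pq})}}\frac{1}{\sqrt{a_\ell b_\ell}}\cdot\Bigl|\sum_{r\ge1}\frac1r\sum_{\substack{mn\le T^{1+\varepsilon},\ h,k\le T^\theta\\ nh=b_\ell r,\ mk=a_\ell r}}b(h)b(k)\,\hat f_{m,n,T}(x_\ell)\Bigr|,\qquad x_\ell:=\tfrac{\alpha}{2\pi}\log\tfrac{a_\ell}{b_\ell}-\ell,
\]
and estimate the inner sum and the sum over pairs separately.

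For the inner sum, note that for a fixed co-prime pair the number $x_\ell$ is fixed, so I would use $\hat f_{m,n,T}(x_\ell)\ll T$ (immediate from $W\ll1$, which gives $|\hat f_{m,n,T}(x)|\le\int_T^{2T}1\,dt\ll T$ for every $x$), together with $b(h),b(k)\ll d_A\ll T^\varepsilon$. For fixed $r$ the integers $m,n,h,k$ are determined by divisors $k\mid a_\ell r$, $h\mid b_\ell r$ with $k,h\le T^\theta$, so the inner sum over $m,n,h,k$ has $\ll d(a_\ell r)d(b_\ell r)\ll(a_\ell b_\ell r)^\varepsilon$ terms; and $mn=a_\ell b_\ell r^2/(hk)\ge a_\ell b_\ell r^2 T^{-2\theta}$ combined with $mn\le T^{1+\varepsilon}$ forces $r\le T^{(1+2\theta+\varepsilon)/2}(a_\ell b_\ell)^{-1/2}$. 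Since $a_\ell b_\ell\gg T^{1-2\eta}$ this is an extremely short range for $r$, and since also $a_\ell b_\ell\le mknh\le T^{1+2\theta+\varepsilon}$ all the factors $(a_\ell b_\ell r)^\varepsilon$ are $T^{O(\varepsilon)}$; so the inner sum attached to $(\ell,a_\ell,b_\ell)$ is $\ll T^{1+\varepsilon}$.

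It then remains to show that, for each $\ell$ and with $y:=e^{2\pi\ell/\alpha}$,
\[
\sum_{\substack{(a_\ell,b_\ell)=1\\ K_\ell<b_\ell<M_\ell,\ |a_\ell/b_\ell-y|<yT^{-1+\eta}}}\frac{1}{\sqrt{a_\ell b_\ell}}\ \ll\ T^{-1/2+\theta+\varepsilon}.
\]
The essential input is elementary: two distinct reduced fractions with denominators $\le2B$ differ by at least $(2B)^{-2}$, so the number of co-prime pairs $(a_\ell,b_\ell)$ with $b_\ell\in[B,2B]$ and $a_\ell/b_\ell$ in an interval of length $L$ is $\ll1+B^2L$ — in particular there is no spurious ``one pair per denominator'' contribution. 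Here $L\asymp yT^{-1+\eta}$ and $a_\ell\asymp b_\ell y\asymp By$, so the dyadic block $b_\ell\in[B,2B]$ contributes $\ll(1+B^2yT^{-1+\eta})/(B\sqrt y)=(B\sqrt y)^{-1}+B\sqrt y\,T^{-1+\eta}$ to the sum. Summing the geometric series over dyadic $B$ in $[K_\ell,M_\ell]$, the first piece is dominated by $B=K_\ell$ and is $\ll(K_\ell\sqrt y)^{-1}=T^{-1/2+\eta}$, while the second is dominated by $B=M_\ell$ and is $\ll M_\ell\sqrt y\,T^{-1+\eta}=T^{-1/2+\theta+\varepsilon+\eta}$, using $K_\ell\sqrt y=T^{1/2-\eta}$ and $M_\ell\sqrt y=T^{1/2+\theta+\varepsilon}$.

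Finally, a non-zero term requires $a_\ell\le mk\le T^{1+\theta+\varepsilon}$, hence $y\asymp a_\ell/b_\ell\le T^{1+\theta+\varepsilon}$ and $\ell\ll_\alpha\log T$, so the sum over $\ell$ has only $\ll\log T$ terms; combining the last two displays, $|S_2|\ll T^{1+\varepsilon}\cdot T^{-1/2+\theta+\varepsilon+\eta}\cdot\log T\ll T^{1/2+\theta+\varepsilon}$ after choosing $\eta,\varepsilon$ small and relabelling. The step I expect to be the main obstacle is the third paragraph: it is tempting to bound the number of admissible numerators for a given denominator $b_\ell$ by $1+b_\ell L$ and then sum this over all $b_\ell\in(K_\ell,M_\ell)$, but that over-counts and only yields $|S_2|\ll T^{1+\varepsilon}$; one has to use instead the Farey-spacing bound $\ll1+B^2L$ per dyadic block, after which the weight $(a_\ell b_\ell)^{-1/2}$ renders the geometric sums convergent at their endpoints and delivers the saving.
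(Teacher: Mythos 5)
Your proof is correct and follows essentially the same route as the paper's: a trivial bound $\hat f_{m,n,T}\ll T$ together with divisor-function estimates for the inner sum over $m,n,h,k,r$, a Farey-spacing count of $\ll 1+B^{2}L$ admissible co-prime pairs per dyadic block $b_\ell\asymp B$, summation of the resulting geometric series over $K_\ell<B<M_\ell$, and finally the observation that only $\ll\log T$ values of $\ell$ contribute. The paper organizes the computation slightly differently (keeping the weight $(a_\ell b_\ell)^{-1/2}$ inside the dyadic-block estimate rather than separating the inner sum as a clean $T^{1+\varepsilon}$), but the ideas and the resulting bound are the same.
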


\begin{Proof}
  Recall that the $a_{\ell}, b_{\ell}$ are always assumed to satisfy the condition
  \begin{equation} \label{condition3}
    \bigg | \frac{a_{\ell}}{b_{\ell}} - e^{2\pi \ell / \alpha} \bigg | \leq \frac{e^{2\pi \ell / \alpha}}
    {T^{1 - \eta}}.
  \end{equation}
  Recall also that
  \begin{align*}
    K_{\ell} & := T^{1/2 - \eta} e^{-\pi \ell / \alpha} \\
    M_{\ell} & := T^{1/2 + \theta + \varepsilon} e^{-\pi \ell / \alpha}
  \end{align*}
  Then,
  \begin{equation}
    S_2 = \sum_{\ell \in \mathbbm{Z}} \sum_{\substack{
      K_{\ell} < b_{\ell} < M_{\ell}\\
      a_{\ell} \geqslant 1\\
      (a_{\ell}, b_{\ell}) = 1\\
      \text{satisfy } ( \ref{condition3})
    }} \frac{(a_{\ell} / b_{\ell})^{\mathi \beta}}{\sqrt{a_{\ell} b_{\ell}}} \sum_{r
    \geqslant 1} \frac{1}{r} \sum_{h, k \leqslant T^{\theta}} b (h) b (k)
    \sum_{\substack{
      mn \leqslant T^{1 + \varepsilon}\\
      nh = b_{\ell} r\\
      mk = a_{\ell} r
    }} \hat{f}_{m, n, T} \bigg ( \frac{\alpha \log (a_{\ell} / b_{\ell})}
        {2 \pi} - \ell
    \bigg )
    . \label{fourtherquation}
  \end{equation}
  We split the above sum into dyadic blocks $b_{\ell} \asymp N$ 
  with $K_{\ell} < N <
  M_{\ell}$. The
  number of $(a_{\ell}, b_{\ell}) = 1$ with $b_{\ell} \asymp N$ and satisfying
  (\ref{condition3}) is bounded by
  \[ \ll \frac{e^{2 \pi \ell / \alpha}}{T^{1 - \eta}} \cdot N^2 + 1 \]
  because Farey fractions with denominators of size $\asymp N$ are spaced at
  least $N^{- 2}$ apart. Therefore, for a fixed $\ell$, using the bounds $b (n) \ll n^{\varepsilon}$  and $\hat{f}_{m, n, T} (x) \ll T$,  the dyadic block with
  $b_{\ell} \asymp N$ contributes at most,
  \begin{equation}
    \label{tobound}
    \ll T^{1 + \varepsilon} \sum_{\substack{
       b_{\ell} \asymp N\\
       a_{\ell} \geqslant 1\\
       (a_{\ell}, b_{\ell}) = 1\\
       a_{\ell}, b_{\ell} \text{satisfy } ( \ref{condition3} )
     }} \frac{1}{(a_{\ell} b_{\ell})^{1/2}} \sum_{r < T^2} \frac{1}{r}
     \sum_{\substack{
       m, n, h, k\\
       mk = a_{\ell} r\\
       nh = b_{\ell} r
     }} 1 \ll T^{1 + \varepsilon} \sum_{\substack{
       b_{\ell} \asymp N\\
       a_{\ell} \geqslant 1\\
       (a_{\ell}, b_{\ell}) = 1\\
      a_{\ell}, b_{\ell} \text{ satisfy } ( \ref{condition3})
     }} \frac{1}{(a_{\ell} b_{\ell})^{1/2 - \varepsilon}} 
       \end{equation}
because
$$
\sum_{r \leq T^{2}} \frac{1}{r} \sum_{\substack{m,h,k,n \\
      m h = a_{\ell} r \\ n k = b_{\ell} r}} 1 = 
\sum_{r \leq T^{2}} \frac{d(a_{\ell} r)d(b_{\ell} r)}{r} \ll (T a_{\ell} b_{\ell})^{\varepsilon}. 
$$
Since $a_{\ell} \asymp b_{\ell} \cdot e^{2 \pi \ell / \alpha}$ the sum (\ref{tobound}) is bounded by,
  \[ \ll \frac{T}{N} \cdot (T N)^{\varepsilon} e^{- (1 - \varepsilon) 
    \pi \ell / \alpha} \cdot \left ( \frac{e^{2 \pi \ell
     / \alpha}}{T^{1 - \eta}} \cdot N^2 + 1 \right  ). \]
  Keeping $\ell$
  fixed and summing over all possible dyadic blocks $ K_{\ell} < N < M_{\ell} $ 
  shows that for fixed $\ell$ the inner
  sum in (\ref{fourtherquation}) is bounded by
  \begin{align} \label{fixedbound}
   & \ll T^{\varepsilon+ \eta} \cdot e^{\pi ( 1 + \varepsilon) \ell / \alpha}
    \cdot M_{\ell}^{1 + \varepsilon} + T^{1 + \varepsilon}
    \cdot K_{\ell}^{-  1 + \varepsilon} \cdot e^{-(1 -\varepsilon)
      \pi \ell / \alpha} \\ \nonumber
   & \ll T^{1/2 + \theta + \varepsilon + \eta} \cdot e^{\varepsilon \ell / \alpha} + 
    T^{1 /2 + \eta + \varepsilon} \cdot e^{\varepsilon \ell / \alpha}.
\end{align}
The condition (\ref{condition3}) restricts $\ell$ to 
$0 < \ell < 2 \alpha \log T$. 
Summing (\ref{fixedbound}) over all $0 < \ell < 2 \alpha \log T$
we find that $S_2$ is bounded by
$
T^{1/2 + \theta + 2\varepsilon + \eta} + T^{1/2 + \eta + \varepsilon}
$. Since $\theta < \tfrac 12$ and we can take $\eta, \varepsilon$ 
arbitrarily small, but fixed,
the claim follows.
\end{Proof}

\section{Acknowledgements}
This work was done while both authors were visiting Centre de Recherches Math\'ematiques.  We are grateful for their kind hospitality.

\bibliography{draft}
\bibliographystyle{plain}





\end{document}